\numberwithin{equation}{section}
\def\demo{\noindent{\it Proof. }}
\newtheorem{theorem}{Theorem}[section]
\newtheorem{lemma}[theorem]{Lemma}
\newtheorem{proposition}[theorem]{Proposition}
\newtheorem{corollary}[theorem]{Corollary}
\newtheorem{conjecture}[theorem]{Conjecture}
\theoremstyle{definition}
\newtheorem{definition}[theorem]{Definition} 
\newtheorem{remark}[theorem]{Remark}
\newtheorem{example}[theorem]{Example}
\begin{document}


\title[Footprint and minimum
distance functions]{Footprint and minimum distance functions}  

\thanks{The first and third author were supported by SNI. The second author
was supported by CONACYT and Red Tem\'atica Matem\'aticas y Desarrollo}

\author{Luis N\'u\~nez-Betancourt}
\address{
Centro de Investigaci\'on en Matem\'aticas, 
Guanajuato, Gto., M\'exico.
}
\email{luisnub@cimat.mx}

\author{Yuriko Pitones}
\address{
Departamento de
Matem\'aticas\\
Centro de Investigaci\'on y de Estudios
Avanzados del
IPN\\
Apartado Postal
14--740 \\
07000 Mexico City, D.F.
}
\email{ypitones@math.cinvestav.mx}

\author{Rafael H. Villarreal}
\address{
Departamento de
Matem\'aticas\\
Centro de Investigaci\'on y de Estudios
Avanzados del
IPN\\
Apartado Postal
14--740 \\
07000 Mexico City, D.F.
}
\email{rvillarreal@cinvestav.mx}

\keywords{Minimum distance, degree, regularity, complete intersection, monomial ideal.}
\subjclass[2010]{Primary 13D40; Secondary 13H10, 13P25.}  
\begin{abstract} 
Let $S$ be a polynomial ring over a field $K$,  
with a monomial order $\prec$, and let $I$ be an unmixed graded ideal
of $S$. In this paper we study two functions
associated to $I$: the minimum distance
function $\delta_I$ and the footprint function ${\rm fp}_I$. It is
shown that $\delta_I$ is positive and that ${\rm fp}_I$ is positive if
the initial ideal of $I$ is
unmixed. Then we show
that if $I$ is radical and its associated primes are generated
by linear forms, then $\delta_I$ is strictly decreasing until it
reaches the asymptotic value $1$. If $I$ is the edge ideal of a
Cohen--Macaulay bipartite graph, we show that $\delta_I(d)=1$ for 
$d$ greater than or equal to the regularity of $S/I$. 
For a graded ideal of dimension $\geq 1$, whose initial ideal is a complete
intersection, we give an exact sharp 
lower bound for the corresponding minimum distance function.
\end{abstract}

\maketitle 

\section{Introduction}\label{intro-section}
Let $S=K[t_1,\ldots,t_s]=\oplus_{d=0}^{\infty} S_d$ be a polynomial ring over
a field $K$ with the standard grading and let $I\neq(0)$ be a graded ideal
of $S$. The {\it degree\/} or {\it multiplicity\/} of $S/I$ is
denoted by $\deg(S/I)$.

Given an integer $d\geq 1$, let $\mathcal{F}_d$ be the set of 
all zero-divisors of $S/I$ not in $I$ of degree $d\geq 1$:
$$
\mathcal{F}_d:=\{\, f\in S_d\, \vert\, f\notin I,\, (I\colon f)\neq
I\},
$$
where $(I\colon f):=\{h\in S\vert\, hf\in I\}$ is the {\it quotient
\/} ideal or {\it colon\/} ideal of $I$ with
respect to $f$. The {\it minimum distance function\/} of $I$ is the function  
$\delta_I\colon \mathbb{N}_+\rightarrow \mathbb{Z}$ given by 
$$
\delta_I(d):=\left\{\begin{array}{ll}\deg(S/I)-\max\{\deg(S/(I,f))\vert\,
f\in\mathcal{F}_d\}&\mbox{if }\mathcal{F}_d\neq\emptyset,\\
\deg(S/I)&\mbox{if\ }\mathcal{F}_d=\emptyset.
\end{array}\right.
$$

Fix a graded monomial order $\prec$ on $S$. The initial ideal of $I$
is denoted by ${\rm in}_\prec(I)$. Let $\Delta_\prec(I)$  be the 
{\it footprint\/} or \emph{Gr\"obner \'escalier} 
of $S/I$ consisting of all the {\it standard monomials\/} of $S/I$,
that is, all the monomials of $S$ not in the ideal ${\rm in}_\prec(I)$. 

Let $\mathcal{M}_{\prec, d}$ be the set of 
all zero-divisors of $S/{\rm in}_\prec(I)$ of degree $d\geq 1$ that
are in $\Delta_\prec(I)$:
$$
\mathcal{M}_{\prec, d}:=\{t^a\, \vert\,
t^a\in\Delta_\prec(I)\cap S_d,\ ({\rm in}_\prec(I)\colon t^a)\neq {\rm
in}_\prec(I)\}.
$$ 

The {\it footprint
function\/} of $I$, 
denoted ${\rm fp}_I$, is the function ${\rm fp}_I\colon
\mathbb{N}_+\rightarrow \mathbb{Z}$ given
by 
$$
{\rm fp}_I(d):=\left\{\begin{array}{ll}\deg(S/I)-\max\{\deg(S/({\rm
in}_\prec(I),t^a))\,\vert\,
t^a\in\mathcal{M}_{\prec, d}\}&\mbox{if }\mathcal{M}_{\prec, d}\neq\emptyset,\\
\deg(S/I)&\mbox{if }\mathcal{M}_{\prec, d}=\emptyset.
\end{array}\right.
$$

In this paper we study $\delta_I$ and ${\rm fp}_I$ from a theoretical
point of view. The functions $\delta_I$ and ${\rm fp}_I$ were
introduced in \cite{hilbert-min-dis} and \cite{min-dis-ci},
respectively. 
The interest in these functions is essentially due to the following
two facts: the minimum distance function is related to the minimum
distance in coding theory \cite[Theorem~4.7]{hilbert-min-dis} and the
footprint function is much easier to compute. There are significant
cases in which either the footprint function is a lower bound for the
minimum distance function \cite[Lemma~3.10(a)]{min-dis-ci} or the two
functions coincide \cite[Corollary~4.4]{min-dis-ci}. 

The footprint lower bound was used
in the works of Geil \cite{geil} and Carvalho \cite{carvalho} to study
affine Reed-Muller-type codes. Long before these two papers appeared the
footprint was used by Geil in connection with all kinds
of codes (including one-point algebraic geometric codes); see
\cite{geil-2008,geil-hoholdt,geil-pellikaan} and the references therein.

The contents of this
paper are as follows. In Section~\ref{prelim-section} we 
present some of the results and terminology that will be needed
in the paper. 

Our first result shows that $\delta_I$ is positive if $I$ is unmixed,
and that ${\rm fp}_I$ is also
positive if
${\rm in}_\prec(I)$ is unmixed (Theorem~\ref{md-unmixed}). This
improves the correlated non-negativity of the functions $\delta_I$ 
and ${\rm fp}_I$ that was shown in \cite[Lemma~3.10]{min-dis-ci}. We show
that if $I$ is a radical unmixed ideal whose associated primes are generated
by linear forms, then $\delta_I$ is strictly decreasing until it
reaches the asymptotic value $1$ (Theorem~\ref{md-decreasing}). This 
gives a wide generalization of
\cite[Theorem~4.5(vi)]{hilbert-min-dis}. Then 
we conjecture that $\delta_I(d)=1$ for $d\geq{\rm reg}(S/I)$, where ${\rm reg}(S/I)$ is
the regularity of $S/I$ (Conjecture~\ref{md=1-regularity-conjecture}).
We show this conjecture when $I$ is the
edge ideal of a Cohen--Macaulay bipartite graph without isolated
vertices (Proposition~\ref{md=1-regularity-cmbg}). 

If $I$ is a complete intersection monomial ideal of dimension
$\geq 1$, we present an explicit formula for ${\rm fp}_{I}(d)$ 
(Theorem~\ref{footprint-ci-monomial}). In this case  ${\rm
fp}_{I}(d)=\delta_I(d)$
(Proposition~\ref{unmixed-monomial-geil-carvalho}).   
For a graded ideal of dimension $\geq 1$, whose initial ideal is a complete
intersection, we give an exact sharp 
lower bound for the corresponding minimum distance function 
(Theorem~\ref{footprint-ci-general}). As a particular case we recover
\cite[Theorem~3.14]{min-dis-ci}; 
as is seen in \cite{min-dis-ci} this result has interesting
applications to coding theory and to packing and covering in
combinatorics.

\smallskip

For all unexplained
terminology and additional information,  we refer to 
\cite{BHer,CLO} (for the theory of Gr\"obner bases, 
commutative algebra, and Hilbert functions).

\section{Preliminaries}\label{prelim-section}
All results of this
section are well-known. To avoid repetitions we continue to employ
the notations and 
definitions used in Section~\ref{intro-section}.

Let $S=K[t_1,\ldots,t_s]=\oplus_{d=0}^{\infty} S_d$ be a polynomial ring over
a field $K$ with the standard grading and let $I\neq(0)$ be a graded ideal
of $S$ of dimension $k$. By the dimension of $I$ we mean the Krull
dimension of $S/I$.   
The {\it Hilbert function} of $S/I$, denoted $H_I$, is given by: 
$$
H_I(d):=\dim_K(S_d/I_d),\ \ \ d=0,1,2,\ldots,
$$
where $I_d=I\cap S_d$. By a theorem of Hilbert \cite[Theorem~4.1.3]{BHer}  
there is a unique polynomial
$h_I(t)\in\mathbb{Q}[t]$ of 
degree $k-1$ such that $h_I(d)=H_I(d)$ for  $d\gg 0$. 
By convention the degree of the zero polynomial is $-1$.  

The {\it degree\/} or {\it multiplicity\/} of $S/I$, denoted
$\deg(S/I)$, is the 
positive integer 
$$
\deg(S/I):=\left\{\begin{array}{ll}(k-1)!\,\displaystyle
\lim_{d\rightarrow\infty}{H_I(d)}/{d^{k-1}} 
&\mbox{if }k\geq 1,\\
\dim_K(S/I) &\mbox{if\ }k=0.
\end{array}\right.
$$ 

\begin{definition}
If $I$ is a graded ideal of $S$, the {\it Hilbert series\/} of $S/I$, 
denoted $F_I(x)$, is given by 
$$F_I(x)=\sum_{d=0}^\infty H_I(d)x^d, \mbox{ where }x\mbox{ is a
variable}. $$ 
\end{definition}
\begin{theorem}{\rm(Hilbert--Serre \cite[p.~58]{Sta1})}\label{hilbert-serre}
Let $I\subset S$ be a graded ideal of dimension $k$. 
Then there is a unique polynomial
$h(x)\in\mathbb{Z}[x]$ such that
$$
F_I(x)=\frac{h(x)}{(1-x)^k}\ \mbox{ and }\ h(1)>0. 
$$
\end{theorem}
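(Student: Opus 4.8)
The plan is to first establish the rational-function form with denominator a power of $1-x$, and then use the already-recalled fact that $H_I$ is eventually a polynomial of degree $k-1$ to pin down the exact exponent and the sign of $h(1)$.

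For the first part I would take a finite graded free resolution $0\to F_p\to\cdots\to F_1\to F_0\to S/I\to 0$ of $S/I$ over $S$ (Hilbert's syzygy theorem), say $F_i=\bigoplus_j S(-d_{ij})$. Since $\dim_K$ is additive on short exact sequences of graded vector spaces, the Hilbert series is additive along the resolution; using $F_{S(-a)}(x)=x^a/(1-x)^s$ this gives
$$F_I(x)=\sum_{i=0}^p(-1)^i\sum_j\frac{x^{d_{ij}}}{(1-x)^s}=\frac{g(x)}{(1-x)^s},\qquad g(x):=\sum_i(-1)^i\sum_j x^{d_{ij}}\in\mathbb Z[x].$$
(Alternatively, one can avoid the syzygy theorem and induct on $s$: multiplication by $t_s$ on $S/I$ yields, for each $d$, a four-term exact sequence $0\to N_d\to (S/I)_d\xrightarrow{t_s}(S/I)_{d+1}\to (S/(I,t_s))_{d+1}\to 0$ with $N=(I\colon t_s)/I$; both $N$ and $S/(I,t_s)$ are finitely generated graded modules over $K[t_1,\dots,t_{s-1}]$, and taking the alternating sum of dimensions, multiplying by $x^{d+1}$ and summing gives $(1-x)F_I(x)=F_{S/(I,t_s)}(x)-xF_N(x)$, so the claim follows from the inductive hypothesis for finitely generated graded modules.) Cancelling from $g(x)$ the highest power of $(1-x)$ dividing it, we obtain $F_I(x)=h(x)/(1-x)^r$ with $h(x)\in\mathbb Z[x]$ and $h(1)\ne 0$.

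It remains to show $r=k$ and $h(1)>0$; uniqueness of $h$ is then clear since $h(x)=F_I(x)(1-x)^k$ as formal power series. Write $h(x)=\sum_j c_jx^j$. Expanding $(1-x)^{-r}=\sum_{d\ge 0}\binom{d+r-1}{r-1}x^d$ and comparing coefficients, for all $d\ge\deg h$ we get $H_I(d)=\sum_j c_j\binom{d-j+r-1}{r-1}$. If $r\ge 1$ this is a polynomial function of $d$ of degree exactly $r-1$, with leading coefficient $\frac{1}{(r-1)!}\sum_j c_j=\frac{h(1)}{(r-1)!}\ne 0$; if $r=0$ then $F_I(x)=h(x)$ is a polynomial and $H_I(d)=0$ for $d\gg 0$. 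On the other hand, by the theorem of Hilbert recalled in this section, $H_I(d)$ agrees for $d\gg 0$ with a polynomial of degree $k-1$ (with the convention that the zero polynomial has degree $-1$). Comparing degrees forces $r=k$. Finally, if $k\ge 1$ this Hilbert polynomial has degree $k-1\ge 0$ and agrees for $d\gg 0$ with the nonnegative numbers $H_I(d)$, so its leading coefficient $h(1)/(k-1)!$ is positive, i.e.\ $h(1)>0$ (equivalently $h(1)=\deg(S/I)$); if $k=0$ then $h(1)=\sum_d H_I(d)=\dim_K(S/I)>0$.

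The rational-function form in the first step is routine bookkeeping. The step that requires care is matching the reduced exponent $r$ with the Krull dimension $k$: a priori the construction only gives $r\le s$, and it is precisely the previously recalled statement that $H_I$ is eventually a polynomial of degree $k-1$ — together with the definition of $\deg(S/I)$ — that identifies $r$ with $k$ and simultaneously yields the positivity $h(1)>0$.
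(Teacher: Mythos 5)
The paper does not prove this statement at all: it is quoted as the classical Hilbert--Serre theorem with a citation to Stanley, so there is no internal proof to compare against. Your argument is correct and is essentially the standard textbook proof: the rational form $g(x)/(1-x)^s$ via a finite free resolution (or the induction on $s$ using multiplication by $t_s$, which you set up correctly, including the identity $(1-x)F_I(x)=F_{S/(I,t_s)}(x)-xF_N(x)$), then cancellation to get $h(1)\neq 0$, and finally the identification of the reduced pole order $r$ with the Krull dimension $k$ by comparing with the degree of the Hilbert polynomial. That last step is the only delicate point, and you handle it by invoking Hilbert's theorem (cited in the paper as \cite[Theorem~4.1.3]{BHer}) that $H_I$ is eventually a polynomial of degree $k-1$; since that fact is established independently of the Hilbert--Serre expansion, there is no circularity within the paper's framework, and your positivity argument ($h(1)/(k-1)!$ is the leading coefficient of a polynomial eventually agreeing with the nonnegative values $H_I(d)$, resp.\ $h(1)=\dim_K(S/I)$ when $k=0$, using the implicit assumption that $I$ is a proper ideal) is fine.
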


\begin{remark}\label{mult-vs-h-vector-1}\rm The leading coefficient 
of the Hilbert polynomial $h_I(x)$ is equal to
$h(1)/(k-1)!$. Thus $h(1)$ is equal to $\deg(S/I)$. 
\end{remark}

\begin{definition} Let $I\subset S$ be a graded ideal. 
The $a$-invariant of $S/I$, denoted $a(S/I)$, is the degree of
$F_I(x)$ as a rational 
function, that is, $a(S/I)=\deg(h(x))-k$.
\end{definition}

\begin{definition}\rm Let $I\subset S$ be a graded ideal and let 
${\mathbb F}_\star$ be the minimal graded free resolution of $S/I$ as an 
$S$-module: 
\[
{\mathbb F}_\star:\ \ \ 0\rightarrow 
\bigoplus_{j}S(-j)^{b_{gj}}
\stackrel{}{\rightarrow} \cdots 
\rightarrow\bigoplus_{j}
S(-j)^{b_{1j}}\stackrel{}{\rightarrow} S
\rightarrow S/I \rightarrow 0.
\]
The {\it Castelnuovo--Mumford regularity\/}\index{Castelnuovo--Mumford}
of $S/I$ ({\it regularity} of $S/I$ for short) \index{regularity} 
is defined as 
$${\rm reg}(S/I)=\max\{j-i\vert\,
b_{ij}\neq 0\}.$$ 
\end{definition}

An excellent reference for the regularity of graded ideals is the book of Eisenbud
\cite{eisenbud-syzygies}. The $a$-invariant, the regularity, and the 
depth of $S/I$ are closely related.

\begin{theorem}{\rm\cite[Corollary~B.4.1]{Vas1}}\label{reg-cm} $a(S/I)\leq{\rm 
reg}(S/I)-{\rm depth}(S/I)$, with equality if $S/I$ is Cohen--Macaulay.
\end{theorem}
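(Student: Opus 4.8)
The plan is to derive everything from the local-cohomology descriptions of the three invariants. Write $\mathfrak{m}=(t_1,\dots,t_s)$ for the irrelevant maximal ideal of $S$, put $k=\dim(S/I)$, and for $0\le i\le k$ set $a_i(S/I):=\max\{\,n\mid H^i_{\mathfrak{m}}(S/I)_n\neq 0\,\}$, with the convention $a_i(S/I)=-\infty$ when $H^i_{\mathfrak{m}}(S/I)=0$. I will use three standard facts: (i) $a(S/I)=a_k(S/I)$, and this number is finite because $H^k_{\mathfrak{m}}(S/I)\neq 0$ by Grothendieck non-vanishing (see \cite{BHer}); (ii) ${\rm depth}(S/I)=\min\{\,i\mid H^i_{\mathfrak{m}}(S/I)\neq 0\,\}$, by Grothendieck's vanishing and non-vanishing theorems; and (iii) the Grothendieck--Serre formula ${\rm reg}(S/I)=\max\{\,a_i(S/I)+i\mid 0\le i\le k\,\}$ (see \cite{eisenbud-syzygies}).

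Granting these, the inequality is immediate. Applying (iii) with $i=k$ gives ${\rm reg}(S/I)\ge a_k(S/I)+k$, so by (i), $a(S/I)=a_k(S/I)\le{\rm reg}(S/I)-k={\rm reg}(S/I)-\dim(S/I)$; and since $\dim(S/I)\ge{\rm depth}(S/I)$, we conclude $a(S/I)\le{\rm reg}(S/I)-{\rm depth}(S/I)$. For the equality clause, suppose $S/I$ is Cohen--Macaulay. Then ${\rm depth}(S/I)=\dim(S/I)=k$, and moreover $H^i_{\mathfrak{m}}(S/I)=0$ for every $i\neq k$, so the maximum in (iii) is attained only by the term $i=k$; hence ${\rm reg}(S/I)=a_k(S/I)+k=a(S/I)+{\rm depth}(S/I)$, that is, $a(S/I)={\rm reg}(S/I)-{\rm depth}(S/I)$.

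A second route to the inequality avoids local cohomology: from the minimal graded free resolution ${\mathbb F}_\star$ of $S/I$ one has $F_I(x)=K(x)/(1-x)^s$ with $K(x)=\sum_{i,j}(-1)^i b_{ij}x^j$; reading the degree of the rational function $F_I(x)$ off this presentation gives $a(S/I)=\deg K(x)-s$, and then $\deg K(x)\le\max\{\,j\mid b_{ij}\neq 0\text{ for some }i\,\}\le{\rm reg}(S/I)+{\rm pd}(S/I)$ by the very definition of regularity, so the Auslander--Buchsbaum equality ${\rm pd}(S/I)+{\rm depth}(S/I)=s$ yields $a(S/I)\le{\rm reg}(S/I)-{\rm depth}(S/I)$. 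In either approach the substantive input is a classical comparison theorem --- the identification $a(S/I)=a_k(S/I)$ together with the Grothendieck--Serre description of regularity in the first, or the Auslander--Buchsbaum formula in the second --- which I would cite rather than reprove; everything else is formal. The one point that needs care, and the reason the Cohen--Macaulay hypothesis is what produces equality, is the possible cancellation among the $b_{ij}$ in $K(x)$: in the Cohen--Macaulay case the tail of ${\mathbb F}_\star$ is the dual of a minimal resolution of the canonical module $\omega_{S/I}$, which forces the top-degree term of $K(x)$ to survive, so no degree is lost and the inequality becomes an equality.
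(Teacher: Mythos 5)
The paper does not actually prove this statement---it is quoted from \cite[Corollary~B.4.1]{Vas1}---so the only issue is whether your argument stands on its own, and your first route does not: with the paper's definition of $a(S/I)$ as the degree of the Hilbert series $F_I(x)$, your fact (i), $a(S/I)=a_k(S/I)$, is false in general. By the Grothendieck--Serre formula, $H_I(n)-h_I(n)=\sum_i(-1)^i\dim_K H^i_{\mathfrak{m}}(S/I)_n$, so $a(S/I)$ is the largest $n$ where this alternating sum is nonzero, and that can exceed $a_k$ because lower local cohomology modules may live in higher degrees. Concretely, for $I=(t_1^2,t_1t_2)\subset K[t_1,t_2]$ one has $F_I(x)=(1+x-x^2)/(1-x)$, hence $a(S/I)=1$, while $a_1(S/I)=-1$; here $\mathrm{reg}(S/I)=1$ and $\mathrm{depth}(S/I)=0$. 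Your chain ``$a=a_k\le\mathrm{reg}-k=\mathrm{reg}-\dim\le\mathrm{reg}-\mathrm{depth}$'' would prove the stronger statement $a(S/I)\le\mathrm{reg}(S/I)-\dim(S/I)$, which this example violates ($1\not\le 0$). The local-cohomology route is easily repaired: Serre's formula gives $a(S/I)\le\max\{a_i(S/I)\mid H^i_{\mathfrak{m}}(S/I)\neq0\}$, and since $H^i_{\mathfrak{m}}(S/I)=0$ for $i<\mathrm{depth}(S/I)$, every relevant index satisfies $a_i\le\mathrm{reg}(S/I)-i\le\mathrm{reg}(S/I)-\mathrm{depth}(S/I)$. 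Your equality argument in the Cohen--Macaulay case is correct in substance, precisely because when only $H^k_{\mathfrak{m}}$ survives Serre's formula does give $a=a_k$---but that is exactly where the CM hypothesis is used, and it must be said explicitly rather than subsumed under the false general claim (i).

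Your second route, by contrast, is a complete and correct proof of the inequality: $a(S/I)=\deg K(x)-s\le\max\{j\mid b_{ij}\neq0\}-s\le(\mathrm{reg}(S/I)+\mathrm{pd}(S/I))-s=\mathrm{reg}(S/I)-\mathrm{depth}(S/I)$ by Auslander--Buchsbaum, and cancellation inside $K(x)$ can only lower the degree, which helps. The equality clause there is only sketched, however: what you need is that for a Cohen--Macaulay module the regularity is attained at the last step of ${\mathbb F}_\star$, i.e.\ $\max\{j\mid b_{pj}\neq0\}=\mathrm{reg}(S/I)+p$ with $p=\mathrm{pd}(S/I)$; granted this, no cancellation can occur in degree $\mathrm{reg}(S/I)+p$ because $b_{i,\mathrm{reg}+p}=0$ for $i<p$, and equality follows. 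That fact does follow from the duality you invoke (the dual of ${\mathbb F}_\star$ is a minimal resolution of the canonical module, and local duality identifies its initial degree with $-a_k$), but as written it is an assertion, not an argument---so either carry out that computation or run the repaired local-cohomology argument for both halves.
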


\begin{definition}\label{definition:index-of-regularity}\rm 
The \emph{regularity index} of the Hilbert function of $S/I$, or simply
the \emph{regularity index} of $S/I$, denoted 
${\rm ri}(S/I)$, is the least integer $n\geq 0$ such that
$H_I(d)=h_I(d)$ for $d\geq n$.  
\end{definition}

If $I$ is a graded
Cohen-Macaulay ideal of $S$ of dimension $1$, then ${\rm reg}(S/I)$,
the regularity of $S/I$ 
is equal to ${\rm ri}(S/I)$, the regularity index of $S/I$. This follows from
Theorem~\ref{reg-cm}. 

\begin{definition}\label{ci-def} An ideal $I\subset S$ is called a {\it complete intersection\/} if 
there exist $g_1,\ldots,g_{r}$ in $S $ such that $I=(g_1,\ldots,g_{r})$, 
where $r={\rm ht}(I)$ is the height of $I$. 
\end{definition}

\begin{remark} 
(a) A graded ideal $I$ is a
complete intersection if and only if $I$ is generated by a homogeneous regular 
sequence with ${\rm ht}(I)$ elements (see
\cite[Chapter~3]{Kaplansky}). (b) A monomial
ideal $I$ is a complete intersection if and only if $I$ is minimally
generated by a regular sequence of monomials with ${\rm ht}(I)$
elements.
\end{remark}

\begin{lemma}{\cite[Corollary~3.3]{Sta1}}\label{hilbertseries-ci} 
If $I\subset S$ is an ideal generated by 
homogeneous polynomials $f_1,\ldots,f_r$, with $r={\rm ht}(I)$ and 
$\delta_i=\deg(f_i)$, then the Hilbert series 
of $S/I$ is given by 
\begin{equation*}
F_I(x)=
\frac{\prod_{i=1}^{r}\left(1-x^{\delta_i}\right)}{(1-x)^s}.
\end{equation*}
\end{lemma}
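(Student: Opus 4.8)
The plan is to argue by induction on $r={\rm ht}(I)$, using the fact recalled in the remark above---a set of $r$ homogeneous generators of an ideal of height $r$ automatically forms a homogeneous regular sequence (see \cite[Chapter~3]{Kaplansky})---together with the additivity of Hilbert series along short exact sequences of graded $S$-modules.

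For $r=1$ I would argue directly: here $I=(f_1)$ with $f_1$ a nonzero homogeneous form of degree $\delta_1$, so $f_1$ is a nonzerodivisor on the domain $S$, and multiplication by $f_1$ gives the exact sequence
\[
0\longrightarrow S(-\delta_1)\xrightarrow{f_1} S\longrightarrow S/I\longrightarrow 0 .
\]
Passing to Hilbert series and using $F_S(x)=(1-x)^{-s}$ yields $F_I(x)=(1-x^{\delta_1})(1-x)^{-s}$, the asserted formula; the degenerate case $r=0$ (i.e.\ $I=(0)$) is this same computation with an empty product in the numerator.

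For the inductive step, set $J=(f_1,\dots,f_{r-1})$. I would first check that ${\rm ht}(J)=r-1$: the bound ${\rm ht}(J)\le r-1$ is Krull's height theorem, while ${\rm ht}(J)\ge r-1$ follows from ${\rm ht}(I)\le{\rm ht}(J)+1$, since $I=(J,f_r)$ is obtained from $J$ by adjoining one element. Hence $J$ is again a complete intersection, of height $r-1$, so by the inductive hypothesis $F_J(x)=\prod_{i=1}^{r-1}(1-x^{\delta_i})/(1-x)^s$. Since $S$ is Cohen--Macaulay, $S/J$ is Cohen--Macaulay and unmixed, with every associated prime of height $r-1$; because ${\rm ht}(J,f_r)=r>r-1$, the form $f_r$ lies in no associated prime of $S/J$ and is therefore a nonzerodivisor on $S/J$. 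Multiplication by $f_r$ then produces the exact sequence of graded modules
\[
0\longrightarrow (S/J)(-\delta_r)\xrightarrow{f_r} S/J\longrightarrow S/I\longrightarrow 0 ,
\]
and, as the shift $(S/J)(-\delta_r)$ contributes $x^{\delta_r}F_J(x)$ to the Hilbert series, we obtain $F_I(x)=(1-x^{\delta_r})\,F_J(x)$. Substituting the inductive formula and absorbing the factor $1-x^{\delta_r}$ into the numerator finishes the induction.

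The only genuinely delicate point---and the step I would spell out most carefully---is that $f_r$ is a nonzerodivisor on $S/J$; everything else is routine bookkeeping with Hilbert series. That point rests on the unmixedness of complete intersections in a Cohen--Macaulay ring, equivalently on the statement that $f_1,\dots,f_r$ form a regular sequence, which is precisely the characterization quoted from \cite[Chapter~3]{Kaplansky}. One could instead invoke that characterization at the outset and simply read off the short exact sequences above; for brevity I would likely present it that way.
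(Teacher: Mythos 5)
Your argument is correct. Note that the paper itself offers no proof of this lemma---it is quoted from Stanley \cite[Corollary~3.3]{Sta1}---and your induction (show ${\rm ht}(J)=r-1$ for $J=(f_1,\ldots,f_{r-1})$, use unmixedness of $J$ in the Cohen--Macaulay ring $S$ to conclude from ${\rm ht}(J,f_r)=r$ that $f_r$ avoids every associated prime of $S/J$, then read the Hilbert series off the multiplication-by-$f_r$ exact sequence) is exactly the standard argument behind that citation, equivalent to invoking that $f_1,\ldots,f_r$ form a homogeneous regular sequence as in the paper's Remark. The one step deserving an explicit word is the inequality ${\rm ht}(J,f_r)\le{\rm ht}(J)+1$: this is not a formal consequence of adjoining a single element in an arbitrary Noetherian ring (it can fail in non-catenary rings), but it does hold here because $S$ is Cohen--Macaulay, hence catenary with the dimension equality ${\rm ht}(\mathfrak{q})={\rm ht}(\mathfrak{p})+{\rm ht}(\mathfrak{q}/\mathfrak{p})$ for primes $\mathfrak{p}\subseteq\mathfrak{q}$.
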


\begin{lemma}{\rm(}\cite[Example~1.5.1]{Migliore},
\cite[Lemma~3.5]{Chardin}{\rm)}\label{hilbertseries-ci-main} If $I\subset S$ is a
complete intersection ideal generated by 
homogeneous polynomials $f_1,\ldots,f_r$, with $r={\rm ht}(I)$ and 
$\delta_i=\deg(f_i)$, then the degree and regularity of $S/I$ are
given by $\deg(S/I)=\delta_1\cdots \delta_r$ and 
${\rm reg}(S/I)=\sum_{i=1}^r(\delta_i-1)$.
\end{lemma}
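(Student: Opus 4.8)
The plan is to deduce both formulas from the closed form of the Hilbert series already recorded in Lemma~\ref{hilbertseries-ci}, by extracting the numerator $h$-polynomial of the Hilbert--Serre presentation and then reading off its value at $1$ and its degree. First I would note that, since $I$ is a complete intersection of height $r$, we have $\dim(S/I)=k=s-r$, and the generators $f_1,\ldots,f_r$ form a homogeneous regular sequence. By Lemma~\ref{hilbertseries-ci},
\[
F_I(x)=\frac{\prod_{i=1}^{r}(1-x^{\delta_i})}{(1-x)^s}.
\]
Factoring $1-x^{\delta_i}=(1-x)(1+x+\cdots+x^{\delta_i-1})$ for each $i$ and cancelling the resulting $(1-x)^r$ against the denominator gives
\[
F_I(x)=\frac{h(x)}{(1-x)^{k}},\qquad h(x):=\prod_{i=1}^{r}\bigl(1+x+\cdots+x^{\delta_i-1}\bigr),\quad k=s-r.
\]
Each factor of $h$ is positive at $x=1$, so $h(1)=\prod_{i=1}^r\delta_i>0$; thus $h(x)$ is exactly the numerator polynomial in the Hilbert--Serre presentation of Theorem~\ref{hilbert-serre}. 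By Remark~\ref{mult-vs-h-vector-1}, $\deg(S/I)=h(1)=\delta_1\cdots\delta_r$, which is the first assertion.

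For the regularity I would read off from the explicit product that $\deg h(x)=\sum_{i=1}^{r}(\delta_i-1)$, so by the definition of the $a$-invariant, $a(S/I)=\deg h(x)-k=\sum_{i=1}^{r}(\delta_i-1)-k$. Since $f_1,\ldots,f_r$ is a regular sequence, $S/I$ is Cohen--Macaulay with ${\rm depth}(S/I)=s-r=k$. Applying the equality case of Theorem~\ref{reg-cm} yields $a(S/I)={\rm reg}(S/I)-k$, and comparing the two expressions for $a(S/I)$ gives ${\rm reg}(S/I)=\sum_{i=1}^{r}(\delta_i-1)$.

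There is no genuine obstacle in this argument; the only step that must be made explicit is the use of the Cohen--Macaulay property of a complete intersection, which is precisely what allows the passage in Theorem~\ref{reg-cm} from the $a$-invariant---a purely numerical datum of the rational function $F_I(x)$---to the Castelnuovo--Mumford regularity. Everything else is the elementary algebra of the factorization of $F_I(x)$ and the identification of its $h$-polynomial.
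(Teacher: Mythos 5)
Your argument is correct and is essentially the paper's own proof, spelled out in full: the degree formula comes from Lemma~\ref{hilbertseries-ci} together with Remark~\ref{mult-vs-h-vector-1}, and the regularity formula from the $a$-invariant of the Hilbert series combined with the Cohen--Macaulay equality case of Theorem~\ref{reg-cm}. The paper states this in two lines; your version just makes the factorization of the numerator and the identification of the $h$-polynomial explicit, which is a fine elaboration rather than a different route.
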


\begin{proof} 
The formula for the degree follows from
Remark~\ref{mult-vs-h-vector-1} and Lemma~\ref{hilbertseries-ci}. 
As $S/I$ is Cohen--Macaulay, the formula for the regularity follows from 
Lemma~\ref{hilbertseries-ci} and Theorem~\ref{reg-cm}.
\end{proof}

If $f$ is a non-zero 
polynomial in $S$ and $\prec$ is a monomial order on $S$, we denote
the {\it leading monomial\/} of $f$ by ${\rm in}_\prec(f)$. 
For $a=(a_1,\ldots,a_s)\in\mathbb{N}^s$, we set
$t^a=t_1^{a_1}\cdots t_s^{a_s}$. Let $I\subset S$ be an ideal.  
A monomial $t^a$ is called a 
{\it standard monomial\/} of $S/I$, with respect 
to $\prec$, if $t^a$ is not the leading monomial of any polynomial in
$I$, that is, $t^a$ is not in the ideal ${\rm in}_\prec(I)$. 
A polynomial $f$ is called {\it standard\/} if
$f\neq 0$ and $f$ is a
$K$-linear combination of standard monomials.
The set of standard monomials, denoted $\Delta_\prec(I)$, is called the {\it
footprint\/} of $S/I$ or \emph{Gr\"obner \'escalier} of $I$. 
A subset $\mathcal{G}=\{g_1,\ldots, g_r\}$ of $I$ is called a 
{\it Gr\"obner basis\/} of $I$ if $${\rm
in}_\prec(I)=({\rm in}_\prec(g_1),\ldots,{\rm in}_\prec(g_r)).$$ 

An element $f\in S$ is called a {\it zero-divisor\/} of $S/I$---as an
$S$-module---if there is
$\overline{0}\neq \overline{a}\in S/I$ such that
$f\overline{a}=\overline{0}$, and $f$ is called {\it regular\/} on
$S/I$ otherwise. Notice that $f$ is a zero-divisor if
and only if $(I\colon f)\neq I$.

\begin{lemma}{\rm\cite[Lemma~2.8]{min-dis-ci}}\label{regular-elt-in} 
Let $\prec$ be a monomial order, let $I\subset S$ be an ideal, and let
$f$ be a polynomial of $S$ of positive degree. If ${\rm in}_\prec(f)$
is regular on $S/{\rm in}_\prec(I)$, then $f$ is regular on $S/I$. 
\end{lemma}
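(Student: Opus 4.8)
The plan is to prove the contrapositive: assuming $f$ is a zero-divisor on $S/I$, I would show that ${\rm in}_\prec(f)$ is a zero-divisor on $S/{\rm in}_\prec(I)$. By definition of zero-divisor, $(I\colon f)\neq I$, so there is a polynomial $g\in S$ with $g\notin I$ and $gf\in I$.

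The key step is to replace $g$ by a polynomial whose leading monomial is standard. Fix a Gr\"obner basis $\mathcal{G}$ of $I$ with respect to $\prec$ and apply the division algorithm to $g$: write $g=q+r$ with $q\in I$ and $r$ either zero or a $K$-linear combination of monomials not in ${\rm in}_\prec(I)$. Since $g\notin I$ we must have $r\neq 0$, and $r=g-q\notin I$; moreover $rf=gf-qf\in I$ because both $gf$ and $qf$ lie in $I$ (here one uses that $I$ is an ideal). Thus $r$ is a standard polynomial with $r\notin I$ and $rf\in I$, and in particular ${\rm in}_\prec(r)$ is a standard monomial, so ${\rm in}_\prec(r)\notin {\rm in}_\prec(I)$.

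Now I would invoke the multiplicativity of leading monomials, ${\rm in}_\prec(rf)={\rm in}_\prec(r)\,{\rm in}_\prec(f)$, valid for any monomial order. Since $rf\in I$, its leading monomial lies in ${\rm in}_\prec(I)$, hence ${\rm in}_\prec(r)\,{\rm in}_\prec(f)\in {\rm in}_\prec(I)$ while ${\rm in}_\prec(r)\notin {\rm in}_\prec(I)$. Therefore $({\rm in}_\prec(I)\colon {\rm in}_\prec(f))\neq {\rm in}_\prec(I)$, i.e., ${\rm in}_\prec(f)$ is a zero-divisor on $S/{\rm in}_\prec(I)$, which is the contrapositive of the assertion.

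There is no serious obstacle here; the only point demanding care is that the multiplier witnessing the zero-divisor relation can be chosen with leading monomial outside ${\rm in}_\prec(I)$, which is precisely what the normal-form reduction of $g$ provides, together with the observation that this reduction preserves $gf\in I$. The positive-degree hypothesis on $f$ is used only to exclude the trivial case in which $f$ is a nonzero constant, a unit, which is automatically regular on $S/I$.
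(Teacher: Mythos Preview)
Your proof is correct; this is the standard contrapositive argument via normal-form reduction and multiplicativity of leading monomials under a monomial order. The paper does not supply its own proof of this lemma---it is simply quoted with a citation to \cite{min-dis-ci}---so there is nothing further to compare.
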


An \emph{associated prime} of $I$ is a prime
ideal $\mathfrak{p}$ of $S$ of the form $\mathfrak{p}=(I\colon f)$
for some $f$ in $S$. An ideal $I\subset S$ is called {\it unmixed\/} 
if all its associated primes have the same height and $I$ is called
{\it radical\/} if $I$ is equal to its radical. 

\begin{definition}\rm 
If ${\rm fp}_I(d)=\delta_I(d)$ for $d\geq 1$, we say that $I$ is a
{\it Geil--Carvalho ideal\/}.
\end{definition}

\begin{proposition}{\rm\cite[Proposition~3.11]{min-dis-ci}}
\label{unmixed-monomial-geil-carvalho} If $I$ is an unmixed monomial
ideal and $\prec$ is 
any monomial order, then $\delta_I(d)={\rm fp}_I(d)$ for $d\geq 1$,
that is, $I$ is a Geil--Carvalho ideal.
\end{proposition}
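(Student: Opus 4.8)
The plan is to exploit that a monomial ideal equals its own initial ideal, so that for every monomial order $\prec$ we have ${\rm in}_\prec(I)=I$, the footprint $\Delta_\prec(I)$ is just the set of monomials outside $I$, and $\mathcal{M}_{\prec,d}$ is exactly the set of monomial zero-divisors of $S/I$ of degree $d$ lying outside $I$. Since a monomial outside $I$ is in particular an element of $\mathcal{F}_d$, one has $\mathcal{M}_{\prec,d}\subseteq\mathcal{F}_d$; hence $\max\{\deg(S/(I,f))\colon f\in\mathcal{F}_d\}\geq\max\{\deg(S/(I,t^a))\colon t^a\in\mathcal{M}_{\prec,d}\}$, and therefore $\delta_I(d)\leq{\rm fp}_I(d)$, with both functions equal to $\deg(S/I)$ when $\mathcal{F}_d=\emptyset$ (which forces $\mathcal{M}_{\prec,d}=\emptyset$ by the construction below). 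The real work is the opposite inequality ${\rm fp}_I(d)\leq\delta_I(d)$, i.e.\ producing, for each $f\in\mathcal{F}_d$, a monomial $t^a\in\mathcal{M}_{\prec,d}$ with $\deg(S/(I,t^a))\geq\deg(S/(I,f))$.

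First I would normalize $f$: since $I$ is a monomial ideal, reducing $f$ modulo $I$ simply deletes the terms of $f$ lying in $I$, and the result is homogeneous of degree $d$, nonzero (because $f\notin I$), still congruent to $f$ modulo $I$ (hence still a zero-divisor not in $I$), and \emph{standard}, so every monomial occurring in it lies outside $I$. In particular $t^a:={\rm in}_\prec(f)$ satisfies $t^a\in\Delta_\prec(I)\cap S_d$. Next, because $f$ is a zero-divisor of $S/I=S/{\rm in}_\prec(I)$, the contrapositive of Lemma~\ref{regular-elt-in} shows that ${\rm in}_\prec(f)=t^a$ is a zero-divisor of $S/{\rm in}_\prec(I)=S/I$, that is, $({\rm in}_\prec(I)\colon t^a)\neq{\rm in}_\prec(I)$. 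Thus $t^a\in\mathcal{M}_{\prec,d}$.

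It then remains to compare degrees, and here I would combine Gr\"obner degeneration with unmixedness. Because $I$ is monomial, $I={\rm in}_\prec(I)\subseteq{\rm in}_\prec((I,f))$ and ${\rm in}_\prec(f)\in{\rm in}_\prec((I,f))$, so $(I,t^a)\subseteq{\rm in}_\prec((I,f))$; moreover $\deg(S/{\rm in}_\prec((I,f)))=\deg(S/(I,f))$ and $\dim S/{\rm in}_\prec((I,f))=\dim S/(I,f)$. Now if $g$ is any zero-divisor of $S/I$ not in $I$, then $g$ lies in some associated prime $\mathfrak{p}$ of $I$, so $(I,g)\subseteq\mathfrak{p}$, and since $I$ is unmixed of dimension $k:=\dim S/I$ we get $k\geq\dim S/(I,g)\geq\dim S/\mathfrak{p}=k$, i.e.\ $\dim S/(I,g)=k$. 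Applying this to $g=f$ and to $g=t^a$ gives $\dim S/(I,t^a)=\dim S/{\rm in}_\prec((I,f))=k$; then from $(I,t^a)\subseteq{\rm in}_\prec((I,f))$ (so $H_{{\rm in}_\prec((I,f))}\leq H_{(I,t^a)}$ pointwise) and equality of dimensions we conclude $\deg(S/(I,t^a))\geq\deg(S/{\rm in}_\prec((I,f)))=\deg(S/(I,f))$. Choosing $f\in\mathcal{F}_d$ that attains $\max\{\deg(S/(I,f))\}$ yields ${\rm fp}_I(d)\leq\delta_I(d)$, and combined with the first paragraph, $\delta_I(d)={\rm fp}_I(d)$ for every $d\geq1$.

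The pieces involving normal forms over a monomial ideal, invariance of the Hilbert function and of the Krull dimension under passing to the initial ideal, and monotonicity of the degree along an inclusion of ideals of the same dimension are routine. The one point that genuinely needs care — and the reason the hypothesis ``$I$ unmixed'' cannot simply be dropped — is ensuring that $(I,f)$, its initial ideal, and $(I,t^a)$ all have the same dimension $k$, since only then are their degrees comparable in the direction the argument requires; for an arbitrary monomial $I$ a zero-divisor could live only in an embedded prime and $\dim S/(I,f)$ could drop, breaking the comparison. Note that the strict decrease $\deg(S/(I,f))<\deg(S/I)$ is not needed here (that is the positivity of $\delta_I$, Theorem~\ref{md-unmixed}); for this statement only the qualitative coincidence of the two optimization problems is at issue.
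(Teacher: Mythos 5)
Your argument is correct, and it is worth noting that the paper does not actually prove this statement: it imports it from \cite{min-dis-ci}, so there is no internal proof to match against. What you wrote is a sound self-contained reconstruction of the intended route. The easy half is exactly the observation that ${\rm in}_\prec(I)=I$ makes $\mathcal{M}_{\prec,d}$ a subset of $\mathcal{F}_d$, whence $\delta_I(d)\leq{\rm fp}_I(d)$ (your construction also correctly rules out the awkward case $\mathcal{F}_d\neq\emptyset$, $\mathcal{M}_{\prec,d}=\emptyset$). Your hard half, passing from $f\in\mathcal{F}_d$ to its normal form, taking $t^a={\rm in}_\prec(f)$, using the contrapositive of Lemma~\ref{regular-elt-in} to place $t^a$ in $\mathcal{M}_{\prec,d}$, and comparing degrees through $(I,t^a)\subseteq{\rm in}_\prec((I,f))$ together with the dimension count forced by unmixedness, is precisely the general inequality $\delta_I(d)\geq{\rm fp}_I(d)$ that the paper records as Proposition~\ref{md-unmixed-propo}(a) (equivalently, it follows from Lemma~\ref{degree-initial-footprint}(i) with ${\rm in}_\prec(I)=I$); you reprove it from scratch rather than cite it, and your proof of it is the standard one. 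The details check out: the normal form leaves $(I\colon f)$ and $(I,f)$ unchanged, the Hilbert function and dimension are preserved under passing to the initial ideal of the graded ideal $(I,f)$ for any monomial order, and the comparison of degrees along an inclusion is legitimate exactly because unmixedness pins all three dimensions at $\dim S/I$, which you correctly flag as the one place the hypothesis is indispensable. So the proposal is complete; the only simplification available within the paper would be to quote Proposition~\ref{md-unmixed-propo}(a) for the inequality $\delta_I\geq{\rm fp}_I$ and reduce your proof to the containment argument of your first paragraph.
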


\begin{proposition}\label{md-unmixed-propo}
Let $I\subset S$ be an unmixed graded ideal, let $\prec$ be a monomial
order on $S$, and let $d\geq 1$ be an
integer. The following hold. 
\begin{itemize}
\item[\rm(a)] {\rm\cite[Lemma~3.10(a)]{min-dis-ci}} $\delta_I(d)\geq {\rm fp}_I(d)$.
\item[\rm(b)] {\rm\cite[Theorem~4.5(iv)]{hilbert-min-dis}} If $t_i$
is a zero-divisor of 
$S/I$ for all $i$, then ${\rm fp}_I(d)\geq 0$.
\end{itemize}
\end{proposition}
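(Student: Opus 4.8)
The plan is to derive both parts from one small toolkit. I will use repeatedly that: (T1) $S/I$ and $S/{\rm in}_\prec(I)$ have the same Hilbert function, hence the same dimension and degree, and likewise $S/(I,g)$ and $S/{\rm in}_\prec((I,g))$ for any homogeneous $g$; (T2) one always has $({\rm in}_\prec(I),{\rm in}_\prec(g))\subseteq{\rm in}_\prec((I,g))$; (T3) if $J_1\subseteq J_2$ are graded ideals with $\dim(S/J_1)=\dim(S/J_2)$, then $\deg(S/J_2)\le\deg(S/J_1)$ (compare leading terms of the Hilbert polynomials, or $K$-dimensions when the dimension is $0$); (T4) a homogeneous element is a zero-divisor on $S/I$ iff it lies in the union of the associated primes of $I$, so by prime avoidance a homogeneous zero-divisor lies in a single associated prime; and (T5) Lemma~\ref{regular-elt-in}. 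Since $I$ is unmixed, every $\mathfrak p\in{\rm Ass}(S/I)$ has $\dim(S/\mathfrak p)=\dim(S/I)$, so for any homogeneous zero-divisor $g$ on $S/I$ the inclusions $I\subseteq(I,g)\subseteq\mathfrak p$ (for a suitable $\mathfrak p$) give $\dim(S/(I,g))=\dim(S/I)$.

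For (a): given $f\in\mathcal F_d$, set $t^a:={\rm in}_\prec(f)$. Then $t^a\notin{\rm in}_\prec(I)$ because $f\notin I$, and $t^a$ is a zero-divisor on $S/{\rm in}_\prec(I)$ by the contrapositive of (T5) applied to the positive-degree element $f$; hence $t^a\in\mathcal M_{\prec,d}$, and in particular $\mathcal M_{\prec,d}\neq\emptyset$ whenever $\mathcal F_d\neq\emptyset$. Next, $\dim(S/(I,f))=\dim(S/I)$ by the last sentence of the previous paragraph, and since ${\rm in}_\prec(I)\subseteq({\rm in}_\prec(I),t^a)\subseteq{\rm in}_\prec((I,f))$ by (T2), all three of these quotients have dimension $\dim(S/I)$ by (T1); then (T3) gives $\deg(S/({\rm in}_\prec(I),t^a))\ge\deg(S/{\rm in}_\prec((I,f)))=\deg(S/(I,f))$. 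Taking maxima over $\mathcal F_d$ and over $\mathcal M_{\prec,d}$, and checking the (immediate) empty-set cases, yields $\delta_I(d)\ge{\rm fp}_I(d)$.

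For (b): if $\mathcal M_{\prec,d}=\emptyset$ then ${\rm fp}_I(d)=\deg(S/I)>0$, so assume $t^a\in\mathcal M_{\prec,d}$; it is enough to bound $\deg(S/({\rm in}_\prec(I),t^a))$ by $\deg(S/I)$. Since $\deg(t^a)=d\ge1$, some variable $t_{i_0}$ divides $t^a$; by hypothesis $t_{i_0}$ is a zero-divisor on $S/I$, so $t_{i_0}\in\mathfrak p$ for some $\mathfrak p\in{\rm Ass}(S/I)$, and therefore $t^a\in\mathfrak p$ as well. Thus $t^a$ is a zero-divisor on $S/I$ and $I\subseteq(I,t^a)\subseteq\mathfrak p$, so $\dim(S/(I,t^a))=\dim(S/I)$. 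Exactly as in (a), the chain ${\rm in}_\prec(I)\subseteq({\rm in}_\prec(I),t^a)\subseteq{\rm in}_\prec((I,t^a))$ together with (T1) forces $\dim(S/({\rm in}_\prec(I),t^a))=\dim(S/I)$, and then (T3) gives $\deg(S/({\rm in}_\prec(I),t^a))\le\deg(S/{\rm in}_\prec(I))=\deg(S/I)$, hence ${\rm fp}_I(d)\ge0$.

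The point that needs care — more than any hard estimate — is that ${\rm in}_\prec(I)$ need not be unmixed even when $I$ is, so $\dim(S/({\rm in}_\prec(I),t^a))$ cannot be controlled by prime avoidance applied to ${\rm in}_\prec(I)$ directly. The fix is to run that step on $I$ itself, where unmixedness is available; in (b) this is legitimate precisely because the hypothesis forces every monomial of positive degree to be a zero-divisor on $S/I$ (it is divisible by a variable lying in an associated prime of $I$). Equidimensionality of $S/I$ is used exactly to keep $\dim(S/(I,g))$ from dropping below $\dim(S/I)$ for a zero-divisor $g$; combined with the inclusions (T2) and the invariance (T1), this pins all the relevant dimensions to $\dim(S/I)$, after which the required degree comparisons are immediate from (T3). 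I would expect the only genuine bookkeeping to be the careful treatment of the cases $\mathcal F_d=\emptyset$ and $\mathcal M_{\prec,d}=\emptyset$.
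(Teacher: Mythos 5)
Part (b) of your argument is correct: bounding $\dim S/({\rm in}_\prec(I),t^a)$ by squeezing it between ${\rm in}_\prec(I)$ and ${\rm in}_\prec((I,t^a))$, and running the prime-avoidance/unmixedness step on $I$ itself rather than on ${\rm in}_\prec(I)$, is exactly the right device. Part (a), however, has a genuine gap: the assertion that $t^a={\rm in}_\prec(f)\notin{\rm in}_\prec(I)$ ``because $f\notin I$'' is false; the implication only goes the other way. For instance, take $I=(t_1(t_1-t_2))\subset K[t_1,t_2]$ with the GRevLex order $t_1\succ t_2$: the form $f=t_1^2$ lies in the associated prime $(t_1)$, so $f\in\mathcal{F}_2$, and $f\notin I$, yet ${\rm in}_\prec(f)=t_1^2\in{\rm in}_\prec(I)=(t_1^2)$. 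Thus ${\rm in}_\prec(f)$ need not be a standard monomial, so it need not lie in $\mathcal{M}_{\prec,d}$, and your chain of inequalities breaks exactly where content is needed. This is not a harmless boundary issue either: your claim that $\mathcal{M}_{\prec,d}\neq\emptyset$ whenever $\mathcal{F}_d\neq\emptyset$ rests on the same step, and if that failed one would have ${\rm fp}_I(d)=\deg(S/I)>\delta_I(d)$, i.e.\ the inequality of (a) itself would be violated.

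The repair is the normal-form maneuver used in Theorem~\ref{computing-the-minimum-distance}: divide $f$ by a Gr\"obner basis of $I$ and replace $f$ by its remainder $h$, a nonzero homogeneous standard polynomial of degree $d$ satisfying $(I,f)=(I,h)$ and $(I\colon f)=(I\colon h)$, so $h\in\mathcal{F}_d$ and $\deg(S/(I,f))=\deg(S/(I,h))$. Now ${\rm in}_\prec(h)$ \emph{is} a standard monomial, it is a zero-divisor on $S/{\rm in}_\prec(I)$ by the contrapositive of Lemma~\ref{regular-elt-in}, hence ${\rm in}_\prec(h)\in\mathcal{M}_{\prec,d}$, and your (T1)--(T3) comparison applied to $h$ (which in effect reproves Lemma~\ref{degree-initial-footprint}(i)) yields $\deg(S/(I,f))\le\deg(S/({\rm in}_\prec(I),{\rm in}_\prec(h)))$; taking maxima then gives $\delta_I(d)\ge{\rm fp}_I(d)$. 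With that one correction your proof is complete and follows the same lines as the cited arguments.
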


The lower bound of Proposition~\ref{md-unmixed-propo}(b) is sharp. In 
Example~\ref{footprint=0-unmixed-ti-zero-div} we show an unmixed
graded ideal $I$ of dimension $1$ such that $t_i$ is a zero-divisor 
for all $i$ and ${\rm fp}_I(d)=0$ for $d=1$.

\begin{proposition}{\rm(Additivity of the degree
\cite[Proposition~2.5]{prim-dec-critical})}\label{additivity-of-the-degree}
If $I$ is an ideal of $S$ and 
$I=\mathfrak{q}_1\cap\cdots\cap\mathfrak{q}_m$ 
is an irredundant primary
decomposition, then
$$
\deg(S/I)=\sum_{{\rm ht}(\mathfrak{q}_i)={\rm
ht}(I)}\hspace{-3mm}\deg(S/\mathfrak{q}_i).$$
\end{proposition}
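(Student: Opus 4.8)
\emph{Proof strategy.}
Set $k=\dim S/I$. The plan is to read $\deg(S/I)$ off the coefficient of $d^{\,k-1}$ in the Hilbert polynomial $h_I$ (it is $(k-1)!$ times that coefficient, by Remark~\ref{mult-vs-h-vector-1} and the definition of degree; when $k=0$ one reads $\dim_K S/I=\sum_d H_I(d)$ instead), and to track how this quantity behaves under intersecting ideals. The only tool needed is the graded short exact sequence
\[
0\longrightarrow S/(A\cap B)\longrightarrow S/A\oplus S/B\longrightarrow S/(A+B)\longrightarrow 0
\]
(with $\bar x\mapsto(\bar x,\bar x)$ and $(\bar y,\bar z)\mapsto\bar y-\bar z$), which holds for any graded ideals $A,B$ and gives $H_{A\cap B}=H_A+H_B-H_{A+B}$, hence $h_{A\cap B}=h_A+h_B-h_{A+B}$. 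I will also use that $S$ is equidimensional, so that $\dim S/\mathfrak p=s-{\rm ht}(\mathfrak p)$ for every prime $\mathfrak p$; thus ${\rm ht}(I)=\min_i{\rm ht}(\mathfrak q_i)$, the components with ${\rm ht}(\mathfrak q_i)={\rm ht}(I)$ are exactly those with $\dim S/\mathfrak q_i=k$, and for every other component the polynomial $h_{\mathfrak q_i}$ has degree $<k-1$.

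First I would reduce to the top-dimensional part. Writing $I=I'\cap I''$ with $I'$ the intersection of the $\mathfrak q_i$ of height ${\rm ht}(I)$ and $I''$ the intersection of the rest (with $I''=S$, so $I=I'$, if there are no others), one has $\dim S/I''<k$ and hence $\dim S/(I'+I'')\le\dim S/I''<k$; so $h_{I''}$ and $h_{I'+I''}$ have degree $<k-1$, and comparing coefficients of $d^{\,k-1}$ in $h_I=h_{I'}+h_{I''}-h_{I'+I''}$ gives $\deg(S/I)=\deg(S/I')$. This reduces the statement to the \emph{unmixed} ideal $I'$, whose primary components all have height $h:={\rm ht}(I)$ and---because the given decomposition is irredundant---pairwise distinct radicals; consequently each $\sqrt{\mathfrak q_i}$ is a \emph{minimal} prime of $I'$.

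Next I would prove additivity for $I'=\mathfrak q_1\cap\cdots\cap\mathfrak q_r$ by induction on $r$, the case $r=1$ being trivial. For $r\ge 2$ set $J=\mathfrak q_2\cap\cdots\cap\mathfrak q_r$ (again unmixed of height $h$, with $r-1$ components and distinct radicals), so $I'=\mathfrak q_1\cap J$. The key point will be that $J\not\subseteq\sqrt{\mathfrak q_1}$: otherwise the prime $\sqrt{\mathfrak q_1}$ contains some $\mathfrak q_i$ with $i\ge 2$, so $\sqrt{\mathfrak q_i}\subseteq\sqrt{\mathfrak q_1}$ with both minimal over $I'$, forcing $\sqrt{\mathfrak q_i}=\sqrt{\mathfrak q_1}$ and contradicting distinctness of radicals. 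Given this, $V(\mathfrak q_1+J)=V(\mathfrak q_1)\cap V(J)$ is a proper closed subset of the irreducible variety $V(\mathfrak q_1)$, so $\dim S/(\mathfrak q_1+J)<\dim S/\mathfrak q_1=k$ and $h_{\mathfrak q_1+J}$ has degree $<k-1$. Then in $h_{I'}=h_{\mathfrak q_1}+h_J-h_{\mathfrak q_1+J}$ only the first two right-hand terms contribute in degree $k-1$ (and both do, being of degree exactly $k-1$), so comparing coefficients of $d^{\,k-1}$ yields $\deg(S/I')=\deg(S/\mathfrak q_1)+\deg(S/J)$, and the induction hypothesis rewrites $\deg(S/J)$ as $\sum_{i=2}^r\deg(S/\mathfrak q_i)$. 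Combined with the reduction step, this gives the asserted formula.

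The hard part is not computational but conceptual: it is (i) invoking equidimensionality of $S$ to know which components contribute in top degree, and (ii) establishing $J\not\subseteq\sqrt{\mathfrak q_1}$, which is precisely where the hypothesis that the decomposition be irredundant (distinct radicals, no superfluous component) must be used---with a merely ``non-superfluous'' decomposition the formula can already fail, e.g. for $(t_1^2,t_2)\cap(t_1,t_2^2)$ in two variables. A less self-contained alternative would be to quote the associativity formula for multiplicities together with the fact that $(S/I)_{\mathfrak p}\cong(S/\mathfrak q_i)_{\mathfrak p}$ when $\mathfrak p=\sqrt{\mathfrak q_i}$ is a minimal prime of top dimension; I would nonetheless prefer the elementary exact-sequence argument above.
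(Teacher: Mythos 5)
This proposition is not proved in the paper at all: it is quoted from \cite{prim-dec-critical} (Proposition~2.5), so there is no internal argument to compare yours with; what you have written is a correct, self-contained proof along the standard lines, essentially a hands-on derivation of the associativity formula for multiplicities via Hilbert polynomials. The individual steps check out: the Mayer--Vietoris-type sequence $0\to S/(A\cap B)\to S/A\oplus S/B\to S/(A+B)\to 0$ does give $h_{A\cap B}=h_A+h_B-h_{A+B}$; the components of height greater than ${\rm ht}(I)$ are discarded correctly because their Hilbert polynomials have degree $<k-1$; and in the unmixed induction the crucial non-containment $J\not\subseteq\sqrt{\mathfrak{q}_1}$ follows exactly as you say from irredundancy (distinct radicals, all of height ${\rm ht}(I)$, hence all minimal over $I'$), which forces $\dim S/(\mathfrak{q}_1+J)<k$ and lets only $h_{\mathfrak{q}_1}$ and $h_J$ contribute in degree $k-1$; your example $(t_1^2,t_2)\cap(t_1,t_2^2)$ correctly locates where distinctness of radicals is indispensable. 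Two points you should make explicit rather than leave implicit: (i) the Hilbert-function bookkeeping requires the ideals involved to be graded, and while the components of height ${\rm ht}(I)$ are isolated, hence unique and graded when $I$ is graded, the embedded components of an arbitrary irredundant decomposition need not be graded, so $H_{I''}$ is not literally defined; this is harmless---replace the given decomposition by a graded irredundant one, which has the same minimal-height components, or note that in this paper the result is only applied to radical ideals, where every $\mathfrak{q}_i$ is a graded minimal prime---but it deserves a sentence; (ii) the comparison of coefficients of $d^{\,k-1}$ presupposes $k\geq 1$, and the case $k=0$ should be dispatched separately by observing that then $I$ is $\mathfrak{m}$-primary, the irredundant decomposition has a single component, and the formula is trivial.
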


The additivity is one of the most useful and well-known facts about the
degree.

\section{Minimum and footprint functions}\label{min-dis-section}
In this section we study the footprint and minimum distance functions
of unmixed graded ideals over an arbitrary field.

\begin{lemma}\label{degree-initial-footprint}
Let $I\subset S$ be an unmixed graded ideal and let $\prec$ be 
a monomial order. If $f\in S$ is homogeneous and $(I\colon f)\neq I$, then
\begin{itemize}
\item[\rm(i)] {\rm\cite[Lemma~4.1]{hilbert-min-dis}}
$\deg(S/(I,f))\leq\deg(S/({\rm
in}_\prec(I),{\rm in}_\prec(f)))\leq\deg(S/I)$,
\item[\rm(ii)] $\deg(S/I)=\deg(S/(I\colon f))+\deg(S/(I, f))$ if
$f\notin I$, and 
\item[\rm(iii)] $\deg(S/(I,f))<\deg(S/I)$ if $f\notin I$. 
\end{itemize}
\end{lemma}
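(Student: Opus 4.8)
The plan is to treat part~(i) as the cited result \cite[Lemma~4.1]{hilbert-min-dis} and to concentrate on parts~(ii) and~(iii). The engine for~(ii) is the short exact sequence of graded $S$-modules
\[
0\longrightarrow (S/(I\colon f))(-e)\stackrel{f}{\longrightarrow} S/I\longrightarrow S/(I,f)\longrightarrow 0,
\]
where $e=\deg(f)$, the first map is multiplication by $f$, its image is $(I,f)/I$, and its cokernel is $S/(I,f)$. Comparing graded pieces and summing against $x^{d}$ gives the identity of Hilbert series $F_I(x)=x^{e}F_{(I\colon f)}(x)+F_{(I,f)}(x)$.

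To convert this into an identity of degrees I would first pin down the three dimensions. Since $S/(I\colon f)$ is isomorphic to the submodule $(I,f)/I$ of $S/I$, every associated prime of $S/(I\colon f)$ is an associated prime of $S/I$; as $I$ is unmixed and $f\notin I$ (so $S/(I\colon f)\neq 0$), this forces $\dim(S/(I\colon f))=\dim(S/I)=:k$. Next, $(I\colon f)\neq I$ means $f$ is a zero-divisor on $S/I$, hence $f\in\mathfrak{p}$ for some $\mathfrak{p}\in{\rm Ass}(S/I)$; then $(I,f)\subseteq\mathfrak{p}$, and because $I$ is unmixed we get $\dim(S/(I,f))\geq\dim(S/\mathfrak{p})=k$, while trivially $\dim(S/(I,f))\leq\dim(S/I)=k$, so $\dim(S/(I,f))=k$ as well. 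Now write $F_I=h_I(x)/(1-x)^k$, $F_{(I\colon f)}=h_1(x)/(1-x)^k$, and $F_{(I,f)}=h_2(x)/(1-x)^k$ with $h_I,h_1,h_2\in\mathbb{Z}[x]$ as in Theorem~\ref{hilbert-serre}; the Hilbert series identity becomes $h_I(x)=x^{e}h_1(x)+h_2(x)$, and evaluating at $x=1$ and using Remark~\ref{mult-vs-h-vector-1} yields $\deg(S/I)=\deg(S/(I\colon f))+\deg(S/(I,f))$, which is~(ii).

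Part~(iii) is then immediate: since $f\notin I$ we have $(I\colon f)\neq S$, so $S/(I\colon f)\neq 0$ and $\deg(S/(I\colon f))$ is a positive integer; combining this with~(ii) gives $\deg(S/(I,f))=\deg(S/I)-\deg(S/(I\colon f))<\deg(S/I)$.

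I expect the only delicate point to be the dimension bookkeeping in the second paragraph — specifically the verification that $\dim(S/(I,f))=k$, so that all three Hilbert series are genuinely "written over the same power of $(1-x)$" and the evaluation at $x=1$ detects all three degrees. If $\dim(S/(I,f))$ were to drop, the contribution $h_2(x)$ would be cut down by a factor of $(1-x)$ and the identity would degenerate at $x=1$; it is exactly here that unmixedness of $I$ together with the hypothesis $(I\colon f)\neq I$ are used. Everything else is the standard additivity of Hilbert functions along a short exact sequence and the passage from Hilbert series numerators to degrees recorded in Remark~\ref{mult-vs-h-vector-1}.
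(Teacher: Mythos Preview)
Your proposal is correct and follows essentially the same approach as the paper: both use the short exact sequence $0\to (S/(I\colon f))(-e)\to S/I\to S/(I,f)\to 0$, verify (via unmixedness and $(I\colon f)\neq I$) that all three rings have the same Krull dimension, and then extract the degree identity from additivity of Hilbert data. The only cosmetic difference is that the paper works with Hilbert functions (summing when $\dim=0$, dividing by $i^k$ and taking limits when $\dim\geq 1$) while you pass through Hilbert series and evaluate the numerators at $x=1$; your dimension argument is in fact spelled out more carefully than the paper's ``it is not hard to see.''
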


\begin{proof} (ii): Using that $I$ is unmixed, it is not hard to see that 
$S/I$, $S/(I\colon f)$, and $S/(I,f)$ have the same Krull dimension.
There is an exact sequence
\begin{eqnarray*}
0\longrightarrow
S/(I\colon f)[-d]\stackrel{f}{\longrightarrow}
S/I\longrightarrow
S/(I,f)\longrightarrow 0.
\end{eqnarray*}
Hence, by the additivity of Hilbert functions \cite[Lemma~5.1.1]{monalg-rev}, we get 
\begin{equation}\label{dec1-15-1}
H_I(i)=H_{(I\colon f)}(i-d)+H_{(I,f)}(i)\ \mbox{
for }i\geq 0.
\end{equation}

If $\dim S/I=0$, then using Eq.~(\ref{dec1-15-1}) one has
$$
\sum_{i\geq 0}H_I(i)=\sum_{i\geq 0}H_{(I\colon f)}(i)+
\sum_{i\geq 0}H_{(I,f)}(i).
$$

Therefore, using the definition of degree, the
required equality follows. If $k=\dim S/I-1$ and $k\geq 1$, by the
Hilbert theorem \cite[Theorem~4.1.3]{BHer}, 
$H_{I}$, $H_{(I,f)}$, and $H_{(I\colon f)}$ are polynomial
functions of degree $k$. Then dividing 
Eq.~(\ref{dec1-15-1}) by $i^{k}$ and taking limits as $i$ goes to
infinity, the required equality follows.

(iii): This part follows at once from part (ii).
\end{proof}

The next alternative formula for $\delta_I$ 
is valid for unmixed graded ideals. This expression for $\delta_I$
will be used to show some of our results.

\begin{corollary}{\rm\cite[Theorem~4.4]{hilbert-min-dis}}\label{wolmer-obs}
Let $I\subset S$ be an unmixed graded ideal. 
If $\mathfrak{m}=(t_1,\ldots,t_s)$ and $d\geq 1$ is an integer such
that $\mathfrak{m}^d\not\subset I$, then
\begin{equation*}
\delta_I(d)=\min\{\deg(S/(I\colon f))\,\vert\, f\in S_d\setminus
I\}.
\end{equation*}
\end{corollary}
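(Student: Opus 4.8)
The statement is just an alternative description of $\delta_I$ using the colon ideals $(I\colon f)$ instead of the ideals $(I,f)$, and the essential algebraic input — the degree additivity formula $\deg(S/I)=\deg(S/(I\colon f))+\deg(S/(I,f))$ — is already available from Lemma~\ref{degree-initial-footprint}. The plan is therefore a bookkeeping argument. First I would note that $\mathfrak{m}^d\not\subset I$ is equivalent to $S_d\not\subset I$: since $I$ is an ideal, $S_d\subset I$ would force $S_e\subset I$ for every $e\geq d$, hence $\mathfrak{m}^d\subset I$. Thus $S_d\setminus I\neq\emptyset$, and the right-hand side of the asserted identity is the minimum of a nonempty set of positive integers bounded above by $\deg(S/I)$ (for $f\notin I$ the ideal $(I\colon f)$ is proper, so $\deg(S/(I\colon f))\geq 1$), so it is well defined; the same observation makes the maximum in the definition of $\delta_I$ well defined.

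Next I would split $S_d\setminus I$ into the set $\mathcal{F}_d$ of degree-$d$ zero-divisors of $S/I$ not in $I$ and the set of degree-$d$ elements outside $I$ that are regular on $S/I$, and evaluate $\deg(S/(I\colon f))$ on each piece. If $f\in S_d\setminus I$ is regular on $S/I$, then $(I\colon f)=I$, so $\deg(S/(I\colon f))=\deg(S/I)$. If $f\in\mathcal{F}_d$, then $f$ is homogeneous, $f\notin I$ and $(I\colon f)\neq I$, so Lemma~\ref{degree-initial-footprint}(ii) gives $\deg(S/(I\colon f))=\deg(S/I)-\deg(S/(I,f))$; moreover $(I,f)$ is a proper graded ideal because $d\geq 1$, so $\deg(S/(I,f))$ is a positive integer and $\deg(S/(I\colon f))<\deg(S/I)$, which is Lemma~\ref{degree-initial-footprint}(iii).

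Finally I would distinguish the two cases in the definition of $\delta_I(d)$. If $\mathcal{F}_d=\emptyset$, every $f\in S_d\setminus I$ is regular on $S/I$, so each term $\deg(S/(I\colon f))$ equals $\deg(S/I)$ and the right-hand side equals $\deg(S/I)=\delta_I(d)$. If $\mathcal{F}_d\neq\emptyset$, the second paragraph shows that the values $\deg(S/(I\colon f))$ coming from $f\in\mathcal{F}_d$ are all strictly smaller than $\deg(S/I)$, which is the value contributed by any regular $f$; hence the minimum over $S_d\setminus I$ is attained on $\mathcal{F}_d$, and
\begin{equation*}
\min\{\deg(S/(I\colon f))\,\vert\, f\in S_d\setminus I\}=\min_{f\in\mathcal{F}_d}\left(\deg(S/I)-\deg(S/(I,f))\right)=\deg(S/I)-\max_{f\in\mathcal{F}_d}\deg(S/(I,f))=\delta_I(d).
\end{equation*}
I do not expect a real obstacle: the only delicate point is ruling out that enlarging the index set from $\mathcal{F}_d$ to $S_d\setminus I$ could decrease the minimum below $\delta_I(d)$, and this is exactly what the strict inequality $\deg(S/(I\colon f))<\deg(S/I)$ for $f\in\mathcal{F}_d$ provides.
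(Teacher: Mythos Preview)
Your proposal is correct and follows essentially the same route as the paper's proof: both split $S_d\setminus I$ into $\mathcal{F}_d$ and the regular elements, use Lemma~\ref{degree-initial-footprint}(ii) to convert $\deg(S/(I,f))$ into $\deg(S/I)-\deg(S/(I\colon f))$ on $\mathcal{F}_d$, observe that regular $f$ contribute the constant value $\deg(S/I)$, and conclude that the minimum over $S_d\setminus I$ coincides with the minimum over $\mathcal{F}_d$ (or equals $\deg(S/I)$ when $\mathcal{F}_d=\emptyset$). Your version is slightly more detailed in justifying $S_d\setminus I\neq\emptyset$ and the well-definedness of the extrema, but the argument is the same.
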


\demo  
If $\mathcal{F}_d=\emptyset$, then 
$\delta_I(d)=\deg(S/I)$, and for any $f\in S_d\setminus I$ one has that
$(I\colon f)$ is equal to $I$. Thus equality holds.
Assume that $\mathcal{F}_d\neq\emptyset$. Take $f\in S_d\setminus I$.
If $(I\colon f)=I$, then $\deg(S/(I\colon f))$ is equal to
$\deg(S/I)$. On the other hand if $(I\colon f)\neq I$, that is,
$f\in\mathcal{F}_d$, then by Lemma~\ref{degree-initial-footprint}(ii)
one has the equality:
\begin{equation*}
\deg(S/(I\colon f))=\deg(S/I)- \deg(S/(I, f)).
\end{equation*}
Notice that in this case $\deg(S/(I\colon f))\leq\deg(S/I)$. 
Therefore 
\begin{eqnarray*}
\delta_I(d)&=&\deg(S/I)-\max\{\deg(S/(I,f))\vert\,
f\in\mathcal{F}_d\}\\
&=&\min\{\deg(S/(I\colon f))\,\vert\, f\in \mathcal{F}_d\}\\ 
&=&
\min\{\deg(S/(I\colon f))\,\vert\, f\in S_d\setminus
I\}. \ \ \ \Box
\end{eqnarray*}

\begin{definition}\label{vasconcelos-function}
Let $I\subset S$ be a non-zero proper graded ideal. The {\it Vasconcelos function\/}
of $I$ is the function
$\vartheta_I\colon\mathbb{N}_+\rightarrow \mathbb{N}_+$ given by 
$$
\vartheta_I(d)=\left\{\begin{array}{ll}\min\{\deg(S/(I\colon f))\,\vert\, f\in S_d\setminus
I\}&\mbox{if }\mathfrak{m}^d\not\subset I,\\
\deg(S/I)&\mbox{if }\mathfrak{m}^d\subset I.\end{array}\right.
$$
\end{definition}

Very little is known about the Vasconcelos function when $I$ is not an unmixed
graded ideal. Next we show that in certain cases the footprint
function can be expressed in terms of the degree of colon ideals. 

\begin{corollary}\label{jul1-16} Let $I$ be a graded ideal and let $\prec$
be a monomial order. If ${\rm in}_\prec(I)$ is an unmixed ideal and 
$\mathcal{M}_{\prec, d}\neq\emptyset$, then 
$$
{\rm fp}_I(d)
=\min\{\deg(S/({\rm
in}_\prec(I)\colon t^a))\,\vert\,
t^a\in S_d\setminus{\rm
in}_\prec(I)\}.
$$
\end{corollary}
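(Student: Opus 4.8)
The plan is to mirror the argument used for $\delta_I$ in Corollary~\ref{wolmer-obs}, but applied to the monomial ideal $J := {\rm in}_\prec(I)$ in place of $I$. The key observation is that ${\rm fp}_I(d)$ is, by definition, built entirely out of $J$: we have
$$
{\rm fp}_I(d) = \deg(S/I) - \max\{\deg(S/(J, t^a)) \mid t^a \in \mathcal{M}_{\prec, d}\},
$$
and $\deg(S/I) = \deg(S/J)$ always holds (the initial ideal preserves the Hilbert function, hence the degree). So the whole statement is really a statement about the monomial ideal $J$, which by hypothesis is unmixed, and $\mathcal{M}_{\prec,d}$ is exactly the set of degree-$d$ zero-divisors of $S/J$ lying in $\Delta_\prec(J) = \Delta_\prec(I)$, i.e. the degree-$d$ monomials $t^a \notin J$ with $(J \colon t^a) \neq J$.

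First I would note that, since $J$ is unmixed, Lemma~\ref{degree-initial-footprint}(ii) applies to $J$: for any monomial $t^a \notin J$ with $(J \colon t^a) \neq J$ we have $\deg(S/J) = \deg(S/(J \colon t^a)) + \deg(S/(J, t^a))$, and in particular $\deg(S/(J \colon t^a)) \le \deg(S/J)$. Next I would observe that the condition $\mathcal{M}_{\prec,d} \neq \emptyset$ forces $\mathfrak{m}^d \not\subset J$: if every degree-$d$ monomial were in $J$ then $\Delta_\prec(J) \cap S_d = \emptyset$ and $\mathcal{M}_{\prec,d}$ would be empty. (Conversely, I should check that when $\mathcal{M}_{\prec,d} \neq \emptyset$ the minimum on the right-hand side is attained at monomials in $\mathcal{M}_{\prec,d}$ and not somewhere else; see the last paragraph.) Then the computation proceeds exactly as in Corollary~\ref{wolmer-obs}: for $t^a \in S_d$ a monomial not in $J$, either $(J \colon t^a) = J$, in which case $\deg(S/(J \colon t^a)) = \deg(S/J) = \deg(S/I)$, which is the maximum possible value and does not lower the minimum below what is contributed by $\mathcal{M}_{\prec,d}$; or $(J \colon t^a) \neq J$, i.e. $t^a \in \mathcal{M}_{\prec,d}$, in which case Lemma~\ref{degree-initial-footprint}(ii) gives $\deg(S/(J \colon t^a)) = \deg(S/I) - \deg(S/(J, t^a))$. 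Taking the minimum over all monomials $t^a \in S_d \setminus J$ therefore yields
$$
\min\{\deg(S/(J \colon t^a)) \mid t^a \in S_d \setminus J\} = \deg(S/I) - \max\{\deg(S/(J, t^a)) \mid t^a \in \mathcal{M}_{\prec,d}\} = {\rm fp}_I(d),
$$
as claimed.

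There is one point of genuine care, and I expect it to be the main (mild) obstacle: the right-hand side of the desired identity ranges over \emph{all} monomials $t^a \in S_d \setminus J$, whereas the definition of ${\rm fp}_I(d)$ only sees those in $\mathcal{M}_{\prec,d}$. The reconciliation is that the monomials $t^a \in S_d \setminus J$ with $(J \colon t^a) = J$ contribute the value $\deg(S/I)$, which is the \emph{largest} value any $\deg(S/(J\colon t^a))$ can take (by the inequality $\deg(S/(J\colon t^a)) \le \deg(S/I)$ noted above), so they never affect the minimum provided $\mathcal{M}_{\prec,d}$ is nonempty — and it is, by hypothesis. One should also remark that $J$ being a monomial ideal means $(J \colon t^a)$ is again monomial and behaves well, but in fact no special property of monomial ideals beyond unmixedness is used; the argument is identical to that of Corollary~\ref{wolmer-obs} with $I$ replaced by $J = {\rm in}_\prec(I)$ throughout, the only inputs being Lemma~\ref{degree-initial-footprint}(ii) for $J$ and the equality $\deg(S/J) = \deg(S/I)$.
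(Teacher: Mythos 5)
Your proposal is correct and follows essentially the same route as the paper: apply Lemma~\ref{degree-initial-footprint}(ii) to the unmixed ideal ${\rm in}_\prec(I)$, use $\deg(S/{\rm in}_\prec(I))=\deg(S/I)$, and observe that monomials $t^a\in S_d\setminus{\rm in}_\prec(I)$ with $({\rm in}_\prec(I)\colon t^a)={\rm in}_\prec(I)$ contribute the maximal value $\deg(S/I)$, so the minimum is attained on $\mathcal{M}_{\prec,d}$, which is nonempty by hypothesis. This is exactly the paper's argument (the analogue of Corollary~\ref{wolmer-obs} for the initial ideal), so no further comment is needed.
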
 
\demo Take $t^a\in\mathcal{M}_{\prec,d}$. 
By Lemma~\ref{degree-initial-footprint}(ii)
one has the equality:
\begin{equation*}
\deg(S/({\rm in}_\prec(I)\colon t^a))=\deg(S/{\rm in}_\prec(I))-
\deg(S/({\rm in}_\prec(I), t^a)).
\end{equation*}
In this case $\deg(S/({\rm in}_\prec(I)\colon t^a))\leq\deg(S/{\rm
in}_\prec(I))$. 
Therefore, noticing that $\deg(S/{\rm in}_\prec(I))$ is equal to 
$\deg(S/I)$, we get
\begin{eqnarray*}
{\rm fp}_I(d)
&=&\deg(S/I)-\max\{\deg(S/({\rm in}_\prec(I),t^a))\vert\,
t^a\in\mathcal{M}_{\prec,d}\}\\
&=&\min\{\deg(S/({\rm in}_\prec(I)\colon t^a))\,\vert\, t^a\in
\mathcal{M}_{\prec, d}\}\\
&=&\min\{\deg(S/({\rm
in}_\prec(I)\colon t^a))\,\vert\,
t^a\in S_d\setminus {\rm
in}_\prec(I)\}.\ \ \ \Box
\end{eqnarray*}

One can apply the corollary to graded lattice ideals of dimension $1$. 

\begin{proposition} Let $I\subset S$ be a graded lattice ideal of 
dimension $1$ and let $\prec$ be a graded monomial order with
$t_1\succ\cdots\succ t_s$. The following hold.
\begin{itemize}
\item[\rm(a)] If ${\rm in}_\prec(I)$ is not prime, then ${\rm
in}_\prec(I)$ is unmixed and $\mathcal{M}_{\prec,d}\neq\emptyset$ for
$d\geq 1$. 
\item[\rm(b)] If ${\rm in}_\prec(I)$ is prime, then 
$I=(t_1-t_s,\ldots,t_{s-1}-t_s)$ and 
$\mathcal{M}_{\prec,d}=\emptyset$ for $d\geq 1$.
\end{itemize}
\end{proposition}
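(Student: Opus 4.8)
The plan is to fix the linear prime $\mathfrak{p}_0:=(t_1-t_s,\ldots,t_{s-1}-t_s)$ and to lean on two elementary properties of a graded lattice ideal $I$. First, the binomial generators $t^{a_+}-t^{a_-}$ of $I$ have $|a_+|=|a_-|$, so each of them dies in $S/\mathfrak{p}_0\cong K[t_s]$ (both monomials become the same power of $t_s$); hence $I\subseteq\mathfrak{p}_0$. Second, every variable $t_i$ is regular on $S/I$, so ${\rm depth}(S/I)\geq 1=\dim(S/I)$ and $S/I$ is Cohen--Macaulay; in particular ${\rm ht}(I)={\rm ht}({\rm in}_\prec(I))=s-1$. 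These two facts I would establish before splitting into cases.

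For part (b): assuming ${\rm in}_\prec(I)$ prime, it must be the monomial prime $(t_i\colon i\neq j)$ for a unique $j$ (a monomial prime of height $s-1$), so for each $i\neq j$ the degree-one monomial $t_i$ is a minimal generator of ${\rm in}_\prec(I)$, whence ${\rm in}_\prec(f_i)=t_i$ for some $f_i\in I$ of degree $1$. The $s-1$ forms $f_i$ are linearly independent, so $\dim_K I_1\geq s-1$; but $I_1\subseteq(\mathfrak{p}_0)_1$ and $\dim_K(\mathfrak{p}_0)_1=s-1$, forcing $I_1=(\mathfrak{p}_0)_1$ and $I\supseteq(I_1)=\mathfrak{p}_0$. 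With $I\subseteq\mathfrak{p}_0$ this gives $I=\mathfrak{p}_0$. Finally $\{t_1-t_s,\ldots,t_{s-1}-t_s\}$ is a Gr\"obner basis (its $S$-polynomials have coprime leading terms), so ${\rm in}_\prec(I)=(t_1,\ldots,t_{s-1})$; for $d\geq 1$ the only standard monomial of degree $d$ is $t_s^d$, and as $(t_1,\ldots,t_{s-1})$ is prime and avoids $t_s$ we get $({\rm in}_\prec(I)\colon t_s^d)={\rm in}_\prec(I)$, i.e.\ $t_s^d\notin\mathcal{M}_{\prec,d}$ and $\mathcal{M}_{\prec,d}=\emptyset$.

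For part (a): assume ${\rm in}_\prec(I)$ not prime. The pivot is the reverse-lexicographic property, which is where the hypothesis $t_1\succ\cdots\succ t_s$ is used essentially: $t_s\mid{\rm in}_\prec(g)$ forces $t_s\mid g$ for homogeneous $g$, so ${\rm in}_\prec(I\colon t_s)=({\rm in}_\prec(I)\colon t_s)$. Since $I$ is a lattice ideal, $I\colon t_s=I$, hence $({\rm in}_\prec(I)\colon t_s)={\rm in}_\prec(I)$: $t_s$ is regular on $S/{\rm in}_\prec(I)$. Then ${\rm depth}(S/{\rm in}_\prec(I))\geq 1$, so $\mathfrak{m}\notin{\rm Ass}(S/{\rm in}_\prec(I))$, and since $\dim S/{\rm in}_\prec(I)=1$ every associated prime is a monomial prime $\neq\mathfrak{m}$ of dimension $\leq 1$, hence of height $s-1$: ${\rm in}_\prec(I)$ is unmixed. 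Writing these (minimal, now also all) associated primes as $\mathfrak{p}_\ell=(t_i\colon i\neq\ell)$, $\ell\in A\neq\emptyset$, a monomial is a zero-divisor on $S/{\rm in}_\prec(I)$ iff it is not a pure power of $t_\ell$ for some $\ell\in A$; and every degree has a standard monomial since $\dim S/{\rm in}_\prec(I)=1$. If $|A|\geq 2$, each standard monomial of positive degree qualifies. If $A=\{\ell\}$, then ${\rm in}_\prec(I)$ is $\mathfrak{p}_\ell$-primary and, being non-prime, is not radical, so ${\rm in}_\prec(I)\subsetneq\mathfrak{p}_\ell$; thus some $t_i$ ($i\neq\ell$) is a standard monomial, and since $t_\ell$ is regular on $S/{\rm in}_\prec(I)$ the monomial $t_it_\ell^{\,d-1}$ is standard and lies in $\mathfrak{p}_\ell$, giving $t_it_\ell^{\,d-1}\in\mathcal{M}_{\prec,d}$. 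Either way $\mathcal{M}_{\prec,d}\neq\emptyset$ for all $d\geq 1$.

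The hard part is the unmixedness in (a): for an arbitrary graded order ${\rm in}_\prec(I)$ can acquire $\mathfrak{m}$ as an embedded prime, and it is precisely the compatibility of the reverse-lexicographic order with colon by the smallest variable --- equivalently, that $t_s$ stays regular after passing to the initial ideal --- that rules this out and makes the prime-versus-unmixed dichotomy go through; the rest of (a), once unmixedness is in hand, is the short case analysis on the set $A$ above.
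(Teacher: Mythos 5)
Your argument is, in outline, the paper's: the engine in both is that $t_s$ is a nonzerodivisor on $S/{\rm in}_\prec(I)$, which rules out $\mathfrak m$ as an associated prime (hence unmixedness, since every associated prime of the one-dimensional monomial ideal other than $\mathfrak m$ has height $s-1$) and lets one manufacture zero-divisor standard monomials $t_it_s^{d-1}$ in every degree; part (b) in both cases comes down to ${\rm in}_\prec(I)=(t_1,\ldots,t_{s-1})$ and $I=(t_1-t_s,\ldots,t_{s-1}-t_s)$. Your genuinely different touches: you obtain $t_s$-regularity from the colon exchange ${\rm in}_\prec(I\colon t_s)=({\rm in}_\prec(I)\colon t_s)$ together with $(I\colon t_s)=I$, while the paper reads it off from the reduced Gr\"obner basis consisting of binomials $t^{a_+}-t^{a_-}$; and in (b) you recover $I=\mathfrak p_0$ from $I\subseteq\mathfrak p_0$ plus a count of linear leading terms rather than from the reduced Gr\"obner basis. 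Note also that once $t_s$ is regular, every associated prime of ${\rm in}_\prec(I)$ omits $t_s$ and so equals $(t_1,\ldots,t_{s-1})$; thus your case $|A|\ge 2$ is vacuous, and the case analysis could be replaced by the paper's one-line choice of a variable $t_i\in\mathfrak p\setminus{\rm in}_\prec(I)$ for an associated prime $\mathfrak p\supsetneq{\rm in}_\prec(I)$ — though what you wrote is not wrong.

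One point needs care. The property you invoke, that $t_s\mid{\rm in}_\prec(g)$ forces $t_s\mid g$ for homogeneous $g$, is specific to the graded \emph{reverse} lexicographic order; it does not follow from the stated hypothesis that $\prec$ is a graded order with $t_1\succ\cdots\succ t_s$ (graded lex satisfies the hypothesis and violates the property, e.g.\ $g=t_1t_3-t_2^2$ has ${\rm in}_\prec(g)=t_1t_3$). So, read against the literal statement, your key step is unjustified — but this is a defect of the statement rather than of your idea: for graded lex the conclusion of (a) can genuinely fail. For instance, $I=(t_1t_3-t_2^2,\ t_1^2-t_2t_3,\ t_1t_2-t_3^2)\subset K[t_1,t_2,t_3]$, the graded lattice ideal of the rank-two lattice spanned by $(1,-2,1)$ and $(2,-1,-1)$, has dimension $1$, yet its graded-lex initial ideal is $(t_1^2,t_1t_2,t_1t_3,t_2^3)$, which has the embedded prime $(t_1,t_2,t_3)$. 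The paper's own proof carries the same implicit restriction: its claim that $t_s$ is regular on $S/{\rm in}_\prec(I)$ because the reduced Gr\"obner basis is binomial is exactly the revlex property in disguise. So your proof is the right one once you state explicitly that $\prec$ is the GRevLex order; just do not present the exchange property as a consequence of $t_1\succ\cdots\succ t_s$ alone.
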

\begin{proof} The reduced Gr\"obner basis of $I$ consists of binomials
of the form $t^{a_+}-t^{a_-}$ (see
\cite[Proposition~8.2.7]{monalg-rev}). It follows that $t_s$ is a
regular element on both $S/I$ and $S/{\rm in}_\prec(I)$. Hence 
$I$ and ${\rm in}_\prec(I)$ are Cohen--Macaulay ideals. 
In particular these ideals are unmixed. 

(a): Assume that ${\rm in}_\prec(I)$ is not prime. 
Then there is an associated prime $\mathfrak{p}$ of $S/{\rm
in}_\prec(I)$ such that ${\rm in}_\prec(I)\subsetneq\mathfrak{p}$.
Pick a variable $t_i$ in $\mathfrak{p}\setminus{\rm in}_\prec(I)$.
Then $t_it_s^{d-1}$ is in $\mathfrak{p}$ and is not in ${\rm
in}_\prec(I)$ for $d\geq 1$. Thus $t_it_s^{d-1}$ is in
$\mathcal{M}_{\prec,d}$ for $d\geq 1$. 

(b): Assume that ${\rm in}_\prec(I)$ is
prime. This part follows by noticing that ${\rm
in}_\prec(I)$, being a face ideal generated by variables, 
is equal to $(t_1,\ldots,t_{s-1})$.
\end{proof} 

The next result is a broad generalization of \cite[Lemma~3.10]{min-dis-ci}.

\begin{theorem}\label{md-unmixed}
Let $I\subset S$ be an unmixed graded ideal, let $\prec$ be a monomial
order on $S$, and let $d\geq 1$ be an
integer. The following hold. 
\begin{itemize}
\item[\rm(a)] $\delta_I(d)\geq 1$.
\item[\rm(b)] ${\rm fp}_I(d)\geq 1$ if ${\rm in}_\prec(I)$ is unmixed. 
\item[\rm(c)] If $\dim(S/I)\geq 1$ and $\mathcal{F}_d\neq\emptyset$ for $d\geq 1$, then  
$\delta_I(d)\geq \delta_I(d+1)\geq 1$ 
for $d\geq 1$. 
\end{itemize}
\end{theorem}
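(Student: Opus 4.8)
The three parts are of decreasing difficulty, so I would dispatch (a) and (b) first and then spend most of the effort on (c).

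For part (a), the key observation is that $\delta_I(d)\geq 1$ is automatic once $\mathcal F_d=\emptyset$, since then $\delta_I(d)=\deg(S/I)\geq 1$. So assume $\mathcal F_d\neq\emptyset$; in particular there is $f\in S_d\setminus I$, hence $\mathfrak m^d\not\subset I$, and Corollary~\ref{wolmer-obs} applies: $\delta_I(d)=\min\{\deg(S/(I\colon f))\,\vert\,f\in S_d\setminus I\}$. Since $\deg(S/J)\geq 1$ for \emph{every} proper ideal $J\subsetneq S$ (the degree is a positive integer by definition whenever $S/J\neq 0$), and $(I\colon f)\neq S$ because $1\notin(I\colon f)$ as $f\notin I$, each term in the minimum is $\geq 1$, so $\delta_I(d)\geq 1$. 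For part (b), the argument is identical but now run on $\mathrm{in}_\prec(I)$: if $\mathcal M_{\prec,d}=\emptyset$ then ${\rm fp}_I(d)=\deg(S/I)\geq 1$, and otherwise Corollary~\ref{jul1-16} gives ${\rm fp}_I(d)=\min\{\deg(S/(\mathrm{in}_\prec(I)\colon t^a))\,\vert\,t^a\in S_d\setminus\mathrm{in}_\prec(I)\}$, and again each colon ideal is proper, so the minimum is $\geq 1$.

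For part (c), I want to compare $\delta_I(d)$ and $\delta_I(d+1)$ using the colon-ideal formula. Since $\mathcal F_d\neq\emptyset$ for all $d\geq 1$ and $\dim(S/I)\geq 1$ (so $\mathfrak m^d\not\subset I$ for every $d$), Corollary~\ref{wolmer-obs} applies at every degree. Pick $f\in S_{d+1}\setminus I$ achieving the minimum $\delta_I(d+1)=\deg(S/(I\colon f))$. The natural idea is to factor $f$ — or rather, to produce from $f$ an element of $S_d\setminus I$ whose colon ideal has degree $\leq\deg(S/(I\colon f))$. If some variable $t_i$ divides $f$, write $f=t_i g$ with $g\in S_d$; then $(I\colon g)\subseteq (I\colon f)$ and, \emph{provided $g\notin I$}, one gets $\deg(S/(I\colon g))\geq\deg(S/(I\colon f))$ — wrong direction. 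Instead I would go the other way: start from $f\in S_d\setminus I$ achieving $\delta_I(d)$ and multiply by a cleverly chosen variable. If $t_i$ is regular on $S/(I\colon f)$ then $(I\colon t_if)=(I\colon f)$, giving $\deg(S/(I\colon t_if))=\deg(S/(I\colon f))=\delta_I(d)$, and if moreover $t_if\notin I$ this exhibits a competitor showing $\delta_I(d+1)\leq\delta_I(d)$. So the crux is: among the variables $t_1,\dots,t_s$, find one that is regular on $S/(I\colon f)$ and does not send $f$ into $I$. Here I would use that $(I\colon f)$ is again unmixed of the same dimension (Lemma~\ref{degree-initial-footprint}(ii) shows $\dim S/(I\colon f)=\dim S/I\geq 1$), so the set of zero-divisors on $S/(I\colon f)$ is the union of its associated primes, each of height $<s$; if \emph{every} variable were a zero-divisor then $\mathfrak m$ would be contained in this union, forcing $\mathfrak m$ to equal one of the associated primes, contradicting $\dim S/(I\colon f)\geq 1$. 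Hence some $t_i$ is regular on $S/(I\colon f)$; and such a $t_i$ also cannot satisfy $t_if\in I$, for that would say $t_i\in(I\colon f)$, making $t_i$ a zero-divisor. This $t_i$ does the job, giving $\delta_I(d)\geq\delta_I(d+1)$, and combining with part (a) applied at degree $d+1$ (valid since $\mathcal F_{d+1}\neq\emptyset$) yields $\delta_I(d+1)\geq 1$.

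**Main obstacle.** The delicate point in (c) is the passage from "$\delta_I(d+1)$ is realized by some $f$" to a genuine comparison — one must be careful about the direction of the degree inequality when colon ideals contain one another, and about the twist $S/(I\colon f)[-d]$ in the exact sequence not affecting degrees. The cleanest route is the one above: realize $\delta_I(d)$, not $\delta_I(d+1)$, and build an explicit degree-$(d+1)$ competitor by multiplying by a regular variable, using unmixedness of $(I\colon f)$ and $\dim\geq 1$ to guarantee such a variable exists. I would double-check the edge case where $S/(I\colon f)$ could be $S$ itself — it cannot, since $f\notin I$ — and that $\mathfrak m^{d+1}\not\subset I$ so Corollary~\ref{wolmer-obs} is legitimately in force at degree $d+1$.
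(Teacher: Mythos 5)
Parts (a) and (b) are fine and essentially the paper's argument (the paper gets (a) from Lemma~\ref{degree-initial-footprint}(iii) rather than via Corollary~\ref{wolmer-obs}, but the two are interchangeable). The problem is in (c), at the step you yourself flag as the crux: the claim that some variable $t_i$ must be regular on $S/(I\colon f)$. Your justification is that if every variable were a zero-divisor then $\mathfrak{m}$ would lie in the union of the associated primes of $S/(I\colon f)$; this does not follow, because a union of primes is not an ideal, so containing the generators $t_1,\ldots,t_s$ does not force it to contain $\mathfrak{m}$ (already $J=(t_1t_2)\subset K[t_1,t_2]$ is unmixed of dimension $1$ with both variables zero-divisors). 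The failure is not just in the argument but in the conclusion: for instance, with $I=(t_1,t_2)\cap(t_3,t_4)\cap(t_1,t_3)$ and $f=t_1+t_3$ one gets $(I\colon f)=(t_1,t_2)\cap(t_3,t_4)$, on which every variable is a zero-divisor; the paper's Example~\ref{footprint=0-unmixed-ti-zero-div} is of the same flavor. Weakening "variable" to "linear form" does not save the step either, because the field is arbitrary: over a finite field $S_1$ can be covered by the degree-one parts of finitely many primes (e.g. $(t_1),(t_2),(t_1+t_2)$ over $\mathbb{F}_2$), so a regular linear form need not exist. Thus the competitor $t_if$ you build, with $(I\colon t_if)=(I\colon f)$, is not available in general.

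The repair is to drop the regularity requirement altogether, which is what the paper does. Since $I$ is unmixed with $\dim(S/I)\geq 1$, $\mathfrak{m}$ is not an associated prime of $S/I$, so $(I\colon f)\subsetneq\mathfrak{m}$ and in particular $(I\colon f)_1\neq S_1$; hence there is a linear form $h$ with $hf\notin I$ — no claim that $h$ is regular on $S/(I\colon f)$. One then only has the inclusion $(I\colon f)\subseteq(I\colon hf)$ rather than equality, but that suffices: both colon ideals have all associated primes among those of $I$, hence height ${\rm ht}(I)$ and dimension $\dim(S/I)$, and the exact sequence $0\to (I\colon hf)/(I\colon f)\to S/(I\colon f)\to S/(I\colon hf)\to 0$ gives $\deg(S/(I\colon f))\geq\deg(S/(I\colon hf))$ upon comparing Hilbert polynomials. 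Combined with Corollary~\ref{wolmer-obs} at degrees $d$ and $d+1$ (legitimate since $hf\in S_{d+1}\setminus I$), this yields $\delta_I(d)=\deg(S/(I\colon f))\geq\deg(S/(I\colon hf))\geq\delta_I(d+1)$, and $\delta_I(d+1)\geq 1$ by part (a). So your overall strategy (realize $\delta_I(d)$ and multiply by a linear form) is the right one, but the degree comparison must go through the containment of colon ideals and unmixedness, not through a regular element whose existence cannot be guaranteed.
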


\begin{proof} 
(a): If $\mathcal{F}_d=\emptyset$, then $\delta_I(d)=\deg(S/I)\geq 1$,
and if $\mathcal{F}_d\neq\emptyset$, then using
Lemma~\ref{degree-initial-footprint}(iii) it follows
that $\delta_I(d)\geq 1$.  

(b): If $\mathcal{M}_{\prec,d}=\emptyset$, then 
${\rm fp}_I(d)=\deg(S/I)\geq 1$. Next assume that
$\mathcal{M}_{\prec,d}\neq\emptyset$. 
As ${\rm in}_\prec(I)$ is unmixed, by
Corollary~\ref{jul1-16}, ${\rm fp}_I(d)\geq 1$.

(c): By part (a), one has $\delta_I(d)\geq 1$. The 
set $\mathcal{F}_d$ is not empty for $d\geq 1$. Thus, by
Corollary~\ref{wolmer-obs}, $\delta_I(d)=\deg(S/(I\colon f))$ 
for some $f\in\mathcal{F}_d$. 
As $I$ is unmixed and $\dim(S/I)\geq 1$, $\mathfrak{m}$ is not an
associated prime of $S/I$. Thus, since $(I\colon f)$ is a graded
ideal, one has $(I\colon f)\subsetneq\mathfrak{m}$. 
Pick a linear form $h\in S_1$ such that $hf\notin I$. 
As $f$ is a zero-divisor of $S/I$, so is $hf$. The ideals
$(I\colon f)$ and $(I\colon hf)$ have height equal to ${\rm ht}(I)$. 
Therefore taking Hilbert
functions in the exact sequence 
$$
0\longrightarrow (I\colon hf)/(I\colon f)\longrightarrow S/(I\colon f)\longrightarrow
S/(I\colon hf)\longrightarrow 0
$$
it follows that $\deg(S/(I\colon f))\geq \deg(S/(I\colon hf))$.
Therefore, applying Corollary~\ref{wolmer-obs}, we get the 
inequality $\delta_I(d)\geq \delta_I(d+1)$. 
\end{proof}

\begin{lemma}\label{jul11-15-colon} 
Let $I\subset S$ be a radical unmixed graded ideal and let
$\mathfrak{p}_1,\ldots\mathfrak{p}_m$ be its associated primes. 
If $f\in\mathcal{F}_d$ for some $d\geq 1$, then
$$
\deg(S/(I\colon f))=\sum_{f\notin\mathfrak{p}_i}\deg(S/\mathfrak{p}_i).
$$
\end{lemma}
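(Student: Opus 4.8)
The plan is to exploit that $I$ is radical, so $I=\mathfrak{p}_1\cap\cdots\cap\mathfrak{p}_m$ is a primary decomposition into primes, and then to compute the colon ideal $(I\colon f)$ prime by prime. Since colon commutes with finite intersections, we have $(I\colon f)=\bigcap_{i=1}^m(\mathfrak{p}_i\colon f)$. Now the key local observation is: if $f\in\mathfrak{p}_i$ then $(\mathfrak{p}_i\colon f)=S$ and this component simply drops out of the intersection; while if $f\notin\mathfrak{p}_i$ then, because $\mathfrak{p}_i$ is prime, $(\mathfrak{p}_i\colon f)=\mathfrak{p}_i$. Hence
\[
(I\colon f)=\bigcap_{f\notin\mathfrak{p}_i}\mathfrak{p}_i .
\]
In particular the right-hand side is a nonempty intersection: since $f\notin I$ (as $f\in\mathcal{F}_d$), there is at least one $i$ with $f\notin\mathfrak{p}_i$, so $(I\colon f)$ is a proper ideal and is itself radical with associated primes exactly the $\mathfrak{p}_i$ not containing $f$.

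Next I would pin down the height. Because $I$ is unmixed, all $\mathfrak{p}_i$ have height ${\rm ht}(I)$, so $(I\colon f)$ is again unmixed of the same height, and $S/(I\colon f)$ has the same Krull dimension as $S/I$. This puts all the primes appearing in the decomposition of $(I\colon f)$ at the top dimension, so the additivity of the degree (Proposition~\ref{additivity-of-the-degree}) applies cleanly: every prime in the irredundant decomposition of $(I\colon f)$ contributes. Therefore
\[
\deg(S/(I\colon f))=\sum_{f\notin\mathfrak{p}_i}\deg(S/\mathfrak{p}_i),
\]
which is exactly the claimed formula. One should note that the decomposition $\bigcap_{f\notin\mathfrak{p}_i}\mathfrak{p}_i$ is automatically irredundant since no prime in a minimal list of primes of a radical ideal contains another.

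The only genuinely delicate point is the identity $(\mathfrak{p}\colon f)=\mathfrak{p}$ for $f\notin\mathfrak{p}$ and $\mathfrak{p}$ prime, but this is immediate: if $gf\in\mathfrak{p}$ then $g\in\mathfrak{p}$ since $f\notin\mathfrak{p}$, giving $(\mathfrak{p}\colon f)\subseteq\mathfrak{p}$, and the reverse inclusion is trivial. The step I would be most careful about is the bookkeeping that guarantees every surviving prime is top-dimensional so that Proposition~\ref{additivity-of-the-degree} gives a sum over \emph{all} of them rather than a subset; this is where unmixedness of $I$ is used, and it is what makes the formula an equality rather than merely an upper bound. Everything else is routine manipulation of colon ideals and a single invocation of the additivity of the degree.
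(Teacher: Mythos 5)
Your proof is correct and follows essentially the same route as the paper: write $I=\cap_{i=1}^m\mathfrak{p}_i$ using radicality, compute $(I\colon f)=\cap_{f\notin\mathfrak{p}_i}\mathfrak{p}_i$ via the prime-by-prime colon computation, and invoke the additivity of the degree (Proposition~\ref{additivity-of-the-degree}). The extra bookkeeping you supply (irredundancy of the subfamily of incomparable minimal primes and the use of unmixedness to ensure every surviving prime has height ${\rm ht}(I)$, so all of them contribute to the degree) is exactly what the paper leaves implicit.
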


\begin{proof} Since
$I$ is a radical ideal, we get that $I=\cap_{i=1}^m\mathfrak{p}_i$.
From the equalities
\[
(I\colon f)=\cap_{i=1}^m(\mathfrak{p}_i\colon
f)=\cap_{f\notin\mathfrak{p}_i}\mathfrak{p}_i,
\]
and using the additivity of the degree 
(see Proposition~\ref{additivity-of-the-degree}), 
the required equality follows.
\end{proof}

We come to the main result of this section---about the asymptotic
behavior of the minimum distance function--which gives a wide generalization of
\cite[Theorem~4.5(vi)]{hilbert-min-dis}. 

\begin{theorem}\label{md-decreasing}
Let $I\subset S$ be an unmixed radical graded ideal. If all the associated primes 
of $I$ are generated by linear forms, then there is an integer $r_0\geq 1$ such that 
$$
\delta_I(1)>\cdots>\delta_I(r_0)=\delta_I(d)=1\
\mbox{ for }\ d\geq r_0.
$$
\end{theorem}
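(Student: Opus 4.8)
The plan is to combine the structural description of $I$ via its associated primes with the colon-ideal formula for $\delta_I$ from Corollary~\ref{wolmer-obs} and the degree formula from Lemma~\ref{jul11-15-colon}. Write $I=\mathfrak{p}_1\cap\cdots\cap\mathfrak{p}_m$ with each $\mathfrak{p}_i$ generated by linear forms; since $I$ is unmixed, all $\mathfrak{p}_i$ have the same height, so $\deg(S/\mathfrak{p}_i)=1$ for every $i$, and hence $\deg(S/I)=m$ by the additivity of the degree. For $f\in\mathcal{F}_d$, Lemma~\ref{jul11-15-colon} then gives $\deg(S/(I\colon f))=\#\{i\colon f\notin\mathfrak{p}_i\}$. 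By Corollary~\ref{wolmer-obs}, whenever $\mathfrak{m}^d\not\subset I$ (which holds for all $d$ here, since $\dim(S/I)\ge 1$ because a prime generated by linear forms has positive-dimensional quotient — unless $s$ of them exhaust the variables, a degenerate case one handles separately), we have
$$
\delta_I(d)=\min\bigl\{\,\#\{i\colon f\notin\mathfrak{p}_i\}\ \big|\ f\in S_d\setminus I\,\bigr\}.
$$

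The monotonicity $\delta_I(d)\ge\delta_I(d+1)$ follows from Theorem~\ref{md-unmixed}(c) once we check $\mathcal{F}_d\ne\emptyset$ for all $d\ge 1$, which is immediate here: any linear form in one $\mathfrak{p}_i$ raised to the $d$-th power lies in $\mathcal{F}_d$ (it is a zero-divisor and, provided $m\ge 2$, it is not in $I$). Next, $\delta_I(d)=1$ for $d$ large: pick, for each $i$, a linear form $\ell_i\in S_1$ with $\ell_i\notin\mathfrak{p}_j$ for $j\ne i$ but $\ell_i\in\mathfrak{p}_i$ — this is possible because the $\mathfrak{p}_j$ are distinct primes generated by linear forms, so prime avoidance in degree $1$ applies after quotienting appropriately; more carefully, one produces a single linear form $\ell$ with $\ell\in\mathfrak{p}_1$ and $\ell\notin\mathfrak{p}_j$ for $j\ge 2$, which forces $\delta_I$ down, and by symmetry one can repeat. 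The cleanest route to the asymptotic value is: choose a linear form $h_i\notin\mathfrak{p}_i$ for each $i$ but with $h_i$ NOT in exactly one complementary prime; taking a suitable product $f=\prod$ of such forms of high enough degree, one can arrange $f\notin\mathfrak{p}_i$ for all but one index, giving $\deg(S/(I\colon f))=1$, hence $\delta_I(d)=1$ for $d\ge m-1$ (say). Since $\delta_I$ is a nonincreasing integer-valued function bounded below by $1$ (Theorem~\ref{md-unmixed}(a)), it stabilizes; let $r_0$ be the first $d$ with $\delta_I(d)=1$.

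It remains to prove the strict decrease $\delta_I(1)>\cdots>\delta_I(r_0)$. This is the crux. Suppose $\delta_I(d)=\delta_I(d+1)=c$ for some $1\le d<r_0$, so $c\ge 2$. Choose $g\in S_{d+1}\setminus I$ realizing the minimum, so $f\notin\mathfrak{p}_i$ for exactly $c$ indices, say $i\in A$ with $|A|=c\ge 2$. Factor out a linear form: write $g$ as a product/combination and use that $g\in\mathfrak{p}_i$ for the $m-c$ indices outside $A$. The idea is to peel off one linear factor $\ell$ dividing $g$ (or, since $g$ need not factor, to find a linear form $\ell$ with $g=\ell g'+(\text{element of }I)$ via a colon manipulation) such that $\ell\notin\mathfrak{p}_{i_0}$ for some $i_0\in A$ while $g'$ still fails to lie in at least one $\mathfrak{p}_i$ with $i\in A$, i.e. $g'\notin I$ and $\#\{i\colon g'\notin\mathfrak{p}_i\}\le c$; then pushing further one gets a degree-$d$ element with strictly fewer than $c$ such indices, contradicting $\delta_I(d)=c$. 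Making this "peeling" argument precise — controlling which primes $g'$ escapes from after removing a linear form, and ensuring $g'\notin I$ — is the main obstacle; I expect to handle it by a prime-avoidance argument in degree $1$ applied to the set $A$, using that each $\mathfrak{p}_i$ is generated by linear forms so that $g\notin\mathfrak{p}_i$ means some linear form of $\mathfrak{p}_i$ is "missing" from the linear span relevant to $g$. Once strict decrease is established on $[1,r_0]$ and constancy at $1$ afterward, the statement follows with this $r_0$.
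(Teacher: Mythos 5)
Your setup (degree one for each $\mathfrak{p}_i$, the colon formula of Lemma~\ref{jul11-15-colon}, Corollary~\ref{wolmer-obs}, monotonicity from Theorem~\ref{md-unmixed}(c), and non-emptiness of $\mathcal{F}_d$ via a power of a linear form in $\mathfrak{p}_1\setminus I$ when $m\geq 2$) matches the paper. But the crux --- the strict decrease --- is exactly the step you leave open, and the ``peeling'' strategy you sketch goes in the wrong direction and would not work as stated: an optimal $g\in S_{d+1}$ realizing $\delta_I(d+1)$ need not be divisible by any linear form, there is no canonical way to write $g=\ell g'+(\text{element of }I)$, and nothing controls which primes $g'$ escapes or whether $g'\notin I$. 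So as written this is a genuine gap, not a detail to be routinized by prime avoidance.

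The missing idea (and the paper's argument) is to go \emph{up} in degree rather than down. Assume $\delta_I(d)>1$ and, by Corollary~\ref{wolmer-obs} and Lemma~\ref{jul11-15-colon}, pick $f\in\mathcal{F}_d$ with $\delta_I(d)=\deg(S/(I\colon f))=\#\{i\colon f\notin\mathfrak{p}_i\}\geq 2$. Then there are two distinct primes $\mathfrak{p}_k,\mathfrak{p}_j$ with $f\notin\mathfrak{p}_k\cup\mathfrak{p}_j$; since $\mathfrak{p}_k\not\subseteq\mathfrak{p}_j$ and $\mathfrak{p}_k$ is generated by linear forms, some linear form $h\in\mathfrak{p}_k\setminus\mathfrak{p}_j$ exists (note this only requires avoiding a \emph{single} other prime, unlike your claim that each $\mathfrak{p}_i$ contains a linear form outside all the others, which can fail over a finite field). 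Then $hf\notin\mathfrak{p}_j$, so $hf\in S_{d+1}\setminus I$, it is a zero-divisor, and $\{i\colon hf\notin\mathfrak{p}_i\}\subsetneq\{i\colon f\notin\mathfrak{p}_i\}$ because $hf\in\mathfrak{p}_k$ while $f\notin\mathfrak{p}_k$; hence $\delta_I(d+1)\leq\deg(S/(I\colon hf))<\deg(S/(I\colon f))=\delta_I(d)$. Since $\delta_I$ is integer-valued, non-increasing and $\geq 1$, strict decrease while $\delta_I>1$ already forces the function to reach and then stay at the value $1$, so your separate construction of a degree-$(m-1)$ product of linear forms escaping exactly one prime (which is fixable, but garbled as written) is not needed.
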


\begin{proof} Let $\mathfrak{p}_1,\ldots,\mathfrak{p}_m$ be the
associated primes of $I$. As $\mathfrak{p}_i$ is generated by linear
forms, then $\deg(S/\mathfrak{p}_i)=1$ for all $i$. Indeed if
$\mathfrak{p}_i=\mathfrak{m}$, then $\deg(S/\mathfrak{p}_i)$ is
$\dim_K(S/\mathfrak{p}_i)=1$, and if
$\mathfrak{p}_i\subsetneq\mathfrak{m}$, then the initial ideal of 
$\mathfrak{p}_i$, with respect to the GRevLex order $\prec$, is generated
by a subset of $t_1,\ldots,t_s$ and $\deg(S/\mathfrak{p}_i)$ is equal
to $\deg(S/{\rm
in}_\prec(\mathfrak{p}_i))=1$. The last equality follows noticing 
that $S/{\rm in}_\prec(\mathfrak{p}_i)$ is a polynomial ring.

If $I$ is prime, then $I=\mathfrak{p}_i$ for some $i$ and
$\mathcal{F}_d=\emptyset$ for $d\geq 1$. Thus $\delta_I(d)=
\deg(S/\mathfrak{p}_i)=1$ for $d\geq 1$, and we can take $r_0=1$. We may now
assume that $I$ has at least two associated primes, that is, 
$m\geq 2$. As $I\subsetneq\mathfrak{p}_1$, there is a form $h$ of
degree $1$ in 
$\mathfrak{p}_1\setminus I$. Hence, as $I$ is a radical ideal, we get
that $h^d$ is in $\mathfrak{p}_1\setminus I$. Thus
$\mathcal{F}_d\neq\emptyset$ for $d\geq 1$. Therefore, by
Theorem~\ref{md-unmixed}(c), one has that $\delta_I(d)\geq \delta_I(d+1)\geq 1$ 
for $d\geq 1$. Hence, assuming that $\delta_I(d)>1$, it suffices to
show that $\delta_I(d)>\delta_I(d+1)$. By Corollary~\ref{wolmer-obs},
there is $f\in \mathcal{F}_d$
such that $\delta_I(d)=\deg(S/(I\colon f))$. 
Then, by Lemma~\ref{jul11-15-colon}, one has
$$
\delta_I(d)=\deg(S/(I\colon f))=
\sum_{f\notin\mathfrak{p}_i}\deg(S/\mathfrak{p}_i)\geq 2.
$$

Hence there are $\mathfrak{p}_k\neq\mathfrak{p}_j$ such that $f$ is
not in $\mathfrak{p}_k\cup \mathfrak{p}_j$. Pick a linear form $h$
in $\mathfrak{p}_k\setminus\mathfrak{p}_j$. 
Then $h f\notin I$
because $h f\notin\mathfrak{p}_j$, and $h f$ is a
zero-divisor of $S/I$ because $(I\colon f)\neq I$. Noticing that 
$f\notin\mathfrak{p}_k$ and $h f\in\mathfrak{p}_k$, one obtains the
strict inclusion
$$
\{\mathfrak{p}_i\vert\, hf\notin\mathfrak{p}_i\}\subsetneq 
\{\mathfrak{p}_i\vert\, f\notin\mathfrak{p}_i\}.
$$
Therefore, by Lemma~\ref{jul11-15-colon}, we get
$$
\deg(S/(I\colon
f))=\sum_{f\notin\mathfrak{p}_i}\deg(S/\mathfrak{p}_i)>
\sum_{h f\notin\mathfrak{p}_i}\deg(S/\mathfrak{p}_i)=\deg(
S/(I\colon h f)).
$$
Hence, by Corollary~\ref{wolmer-obs}, we get $\delta_I(d)>\delta_I(d+1)$.
\end{proof}

\section{Asymptotic behavior of the minimum
distance}\label{conjecture-md-reg}

Let $I\subset S$ be an unmixed radical graded ideal whose associated
primes 
are generated by linear forms. 
According to Theorem~\ref{md-decreasing}, there is an integer
$r_0\geq 1$  such that
$$
\delta_I(1)>\cdots>\delta_I(r_0)=\delta_I(d)=1
\ \mbox{ for }\ d\geq r_0.
$$

\begin{definition}
The integer $r_0$ is called the {\it regularity index\/} of
$\delta_I$. 
\end{definition}
\indent If $I$ is the graded vanishing ideal of a set of points in a
projective space over a finite field, then $r_0\leq {\rm reg}(S/I)$
\cite{GRT,algcodes}, but we do not know whether this holds in
general. The regularity
of $S/I$ can be computed using {\it Macaulay\/}$2$ \cite{mac2}, 
but $r_0$ is very difficult to compute.      

\begin{conjecture}\label{md=1-regularity-conjecture}
Let $I\subset S$ be an unmixed radical graded ideal. If all the associated primes 
of $I$ are generated by linear forms,  then 
$\delta_I(d)=1$ for $d\geq{\rm reg}(S/I)$, that is, $r_0\leq {\rm reg}(S/I)$.
\end{conjecture}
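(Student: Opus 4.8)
\medskip
The plan is to translate the conjecture, by means of the formulas already established for $\delta_I$, into a statement comparing $\mathrm{reg}(S/I)$ with an initial degree of certain subquotients, and then to attack that comparison with standard short exact sequences. Write $I=\mathfrak{p}_1\cap\cdots\cap\mathfrak{p}_m$ with $\mathfrak{p}_1,\dots,\mathfrak{p}_m$ the associated primes of $I$; as in the proof of Theorem~\ref{md-decreasing} one has $\deg(S/\mathfrak{p}_i)=1$ for all $i$. The case $m=1$ is trivial since then $\mathcal{F}_d=\emptyset$ and $\delta_I\equiv 1$, so assume $m\geq 2$; then $\dim(S/I)\geq 1$, $\mathfrak{m}^d\not\subset I$ for $d\geq 1$, and ${\rm reg}(S/I)\geq 1$. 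For $1\leq j\leq m$ set $J_j:=\bigcap_{i\neq j}\mathfrak{p}_i$. Exactly as in the proof of Theorem~\ref{md-decreasing}, Corollary~\ref{wolmer-obs} together with Lemma~\ref{jul11-15-colon} gives $\delta_I(d)=\min\{\,\#\{\,i\mid f\notin\mathfrak{p}_i\,\}\mid f\in S_d\setminus I\,\}$, whence
$$
\delta_I(d)=1\ \Longleftrightarrow\ (J_j)_d\neq I_d\ \text{for some }j\ \Longleftrightarrow\ (J_j/I)_d\neq 0\ \text{for some }j .
$$
Since $\mathfrak{p}_jJ_j\subseteq\mathfrak{p}_j\cap J_j=I$, the module $J_j/I\cong(J_j+\mathfrak{p}_j)/\mathfrak{p}_j$ is a nonzero homogeneous ideal of the polynomial ring $S/\mathfrak{p}_j$ (it is nonzero because the primary decomposition is irredundant), so multiplying by a variable of $S/\mathfrak{p}_j$ shows that $(J_j/I)_d\neq 0$ for every $d\geq{\rm indeg}(J_j/I)$, where ${\rm indeg}$ denotes the least degree in which a nonzero module is nonzero. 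Therefore $\delta_I(d)=1$ for all $d\geq\min_j{\rm indeg}(J_j/I)$, and the conjecture is equivalent to $\min_{1\leq j\leq m}{\rm indeg}(J_j/I)\leq{\rm reg}(S/I)$.

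Next I would bound ${\rm indeg}(J_j/I)\leq{\rm reg}(J_j/I)$ — the regularity of a nonzero finitely generated graded module dominates the top degree of its minimal generators, hence its initial degree, see \cite{eisenbud-syzygies} — and use the short exact sequence $0\to J_j/I\to S/I\to S/J_j\to 0$, which gives ${\rm reg}(J_j/I)\leq\max\{{\rm reg}(S/I),{\rm reg}(S/J_j)+1\}$. Thus it would be enough to produce one index $j$ with ${\rm reg}(S/J_j)\leq{\rm reg}(S/I)-1$, that is, to show that deleting a suitable component of the arrangement strictly lowers the regularity. When $\dim(S/I)=1$ an even weaker assertion suffices: there $H^0_{\mathfrak{m}}(S/I)=0$ forces $H_I(d)=\deg(S/I)=m$ for $d\geq{\rm reg}(S/I)$, so $(J_j/I)_{{\rm reg}(S/I)}\neq 0$ already follows from $H_{J_j}({\rm reg}(S/I))=m-1$, which in turn holds whenever ${\rm reg}(S/J_j)\leq{\rm reg}(S/I)$ (a non-strict inequality). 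This is precisely the mechanism that makes the conjecture hold for vanishing ideals of projective points, where ${\rm reg}(S/I)$ is the regularity index and deleting a point never raises it, and it should also underlie Proposition~\ref{md=1-regularity-cmbg}, whose iterated-cone structure lets one peel off a component without increasing the regularity.

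The main obstacle is exactly this comparison of the regularity of a subspace arrangement with that of its one-component-smaller sub-arrangements: regularity is not monotone under removing components, so the estimate coming from the displayed exact sequence need not be strict, and even the non-strict inequality $\min_j{\rm reg}(S/J_j)\leq{\rm reg}(S/I)$ is not evident once $\dim(S/I)\geq 2$. To push through I would try the Mayer--Vietoris sequence $0\to S/I\to S/J_j\oplus S/\mathfrak{p}_j\to S/(J_j+\mathfrak{p}_j)\to 0$ attached to $I=J_j\cap\mathfrak{p}_j$ in order to pin ${\rm reg}\big(S/(J_j+\mathfrak{p}_j)\big)$ — equivalently the regularity of the ideal $J_j/I$ of $S/\mathfrak{p}_j$ — down in terms of ${\rm reg}(S/I)$ and ${\rm reg}(S/J_j)$, combined with the recursive bounds known for the Castelnuovo--Mumford regularity of a union of linear subspaces. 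Failing a clean monotonicity statement, the fallback is to argue at the level of Hilbert polynomials: since $H_I$ and each $H_{J_j}$ already agree with their Hilbert polynomials at $d={\rm reg}(S/I)$, one needs the polynomial $h_I-h_{J_j}$, of degree $\dim(S/I)-1$ and leading coefficient $1/(\dim(S/I)-1)!>0$, to be positive at $d={\rm reg}(S/I)$ for at least one $j$ — and bounding the largest real root of $h_I-h_{J_j}$ by ${\rm reg}(S/I)$ is, I expect, where the real work lies.
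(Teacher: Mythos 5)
You should first note that the statement you are proving is stated in the paper as Conjecture~\ref{md=1-regularity-conjecture} and is left open there: the authors explicitly say they do not know whether $r_0\leq{\rm reg}(S/I)$ holds in general, and they only verify it in special cases (complete intersection monomial ideals, Proposition~\ref{footprint-monomial-ci-ineq}(b), and edge ideals of Cohen--Macaulay bipartite graphs, Proposition~\ref{md=1-regularity-cmbg}), besides citing the known case of vanishing ideals of points over finite fields. So there is no proof in the paper to compare against, and your text, as you yourself acknowledge, is a reduction rather than a proof. The reduction itself is correct and consistent with the paper's machinery: by Corollary~\ref{wolmer-obs} and Lemma~\ref{jul11-15-colon}, $\delta_I(d)=\min\{\#\{i\mid f\notin\mathfrak{p}_i\}\mid f\in S_d\setminus I\}$, so with $J_j=\bigcap_{i\neq j}\mathfrak{p}_i$ one has $\delta_I(d)=1$ exactly when $(J_j/I)_d\neq 0$ for some $j$, and since $J_j/I$ is a nonzero ideal of the polynomial ring $S/\mathfrak{p}_j$ this persists in all higher degrees; hence the conjecture is equivalent to $\min_j{\rm indeg}(J_j/I)\leq{\rm reg}(S/I)$, i.e.\ to the existence of a ``separator'' of one component of degree at most ${\rm reg}(S/I)$.

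The genuine gap is the step you defer: producing an index $j$ with ${\rm reg}(S/J_j)\leq{\rm reg}(S/I)-1$, or otherwise bounding ${\rm indeg}(J_j/I)$ by ${\rm reg}(S/I)$. Regularity of unions of linear subspaces is not known (and is not expected) to be monotone under deleting a component, so the sufficient condition you isolate is itself an open comparison, and it is in any case only sufficient. Your fallbacks do not close this: the Mayer--Vietoris sequence only relates the three regularities without forcing the needed inequality for some $j$, and the Hilbert-polynomial argument tacitly assumes that $H_{J_j}$ agrees with $h_{J_j}$ at $d={\rm reg}(S/I)$, which already presupposes ${\rm reg}(S/J_j)\leq{\rm reg}(S/I)$; bounding the largest root of $h_I-h_{J_j}$ by ${\rm reg}(S/I)$ is precisely the unresolved content. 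Only the one-dimensional case genuinely goes through (and more simply than you state: there $S/J_j$ is Cohen--Macaulay of dimension one, so $H_{J_j}(d)\leq\deg(S/J_j)=m-1$ for all $d$, while $H_I(d)=m$ for $d\geq{\rm reg}(S/I)$, giving $(J_j/I)_d\neq 0$ for every $j$), which recovers the known points case but leaves dimension at least two, and hence the conjecture, untouched.
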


In this section we give some support for this conjecture.
In what follows we focus in the case that $I$ is an unmixed ideal generated by square-free
monomial ideals of degree $2$.  

\begin{definition}\cite{Vi2}\rm\ Let $G$ be a graph with vertex 
set $V(G)=\{t_1,\ldots,t_s\}$ and edge set $E(G)$.
The {\it edge
ideal\/} of $G$, denoted by $I(G)$,
is the ideal of $S$ generated by all  
monomials $x_e=\prod_{t_i\in e}t_i$ such 
that $e\in E(G)$. 
\end{definition}

Let $G$ be a graph. A subset $F$ of $V(G)$ is called {\it
stable\/} if $e\not\subset F$ for any  
$e\in E(G)$, and a subset $C$ of $V(G)$ is a {\it vertex
cover\/} if and only if $V(G)\setminus C$ is a stable vertex set. A 
{\it minimal vertex cover\/} is a vertex cover which
is minimal with respect to inclusion. A graph is called {\it
unmixed\/} if all its minimal vertex covers have the same cardinality.

Conjecture~\ref{md=1-regularity-conjecture} is open even in the case that $I$ is the edge ideal of
an unmixed bipartite graph. Below we prove the conjecture for edge
ideals of Cohen-Macaulay graphs.

\begin{definition}\rm Let $A$ be a set of vertices of a graph $G$.  
The {\it induced subgraph\/} on $A$, denoted by 
$G[A]$, is the maximal
subgraph of $G$ with vertex set $A$. A
graph of the form $G[A]$ for some $A\subset
V(G)$ is called an {\it induced subgraph} of
$G$. 
\end{definition}

Notice that
$G[A]$ may have isolated vertices, i.e., vertices that do
not belong to any edge of $G[A]$. If $G$ is a
discrete graph, i.e., 
all the vertices of $G$ are
isolated, 
we set $I(G)=0$. 

\begin{definition}\rm An {\it induced matching\/} in a graph $G$ is a set of
pairwise disjoint edges $f_1,\ldots,f_r$ such that the only edges of 
$G$ contained in $\cup_{i=1}^rf_i$ are $f_1,\ldots,f_r$. The
{\it induced matching number\/},
denoted by ${\rm im}(G)$, is the number of edges in the largest
induced matching. 
\end{definition}

\begin{proposition}{\rm\cite[Lemma~2.2]{katzman1}}\label{lower-bound-reg-g} 
If $G$ is a graph, then ${\rm
reg}(R/I(G))\geq {\rm im}(G)$.
\end{proposition}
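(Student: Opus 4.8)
The plan is to locate inside $G$ an induced subgraph whose edge ideal is a complete intersection of $r={\rm im}(G)$ quadrics, to read off its regularity from Lemma~\ref{hilbertseries-ci-main}, and then to transfer this lower bound to $S/I(G)$ via the fact that regularity does not decrease when one passes to an induced subgraph.

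Concretely, first I would fix a largest induced matching $f_1,\dots,f_r$ of $G$, with $f_i=\{a_i,b_i\}$ and $r={\rm im}(G)$ (the case $r=0$ is trivial, since then $I(G)=(0)$ and ${\rm reg}(S/I(G))=0$). Set $A=\{a_1,b_1,\dots,a_r,b_r\}\subseteq V(G)$ and $H=G[A]$. By the definition of an induced matching, the only edges of $G$ contained in $A$ are $f_1,\dots,f_r$, so $H$ is the disjoint union of these $r$ edges, and, viewing its edge ideal inside the polynomial ring $K[A]$ on the $2r$ variables $a_i,b_i$, we have $I(H)=(a_1b_1,\dots,a_rb_r)$. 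These generators form a regular sequence of monomials, so $I(H)$ is a complete intersection of height $r$ generated in degree $2$; Lemma~\ref{hilbertseries-ci-main} (with all $\delta_i=2$) then gives $\deg(K[A]/I(H))=2^r$ and ${\rm reg}(K[A]/I(H))=\sum_{i=1}^r(2-1)=r$.

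It remains to establish the restriction inequality ${\rm reg}(S/I(G))\geq{\rm reg}(K[A]/I(H))$, valid because $H$ is an induced subgraph of $G$; granting it, ${\rm reg}(S/I(G))\geq r={\rm im}(G)$ and we are done. This is the crux. I would prove it with Hochster's formula: $I(G)$ (resp.\ $I(H)$) is the Stanley--Reisner ideal of the independence complex ${\rm Ind}(G)$ (resp.\ ${\rm Ind}(H)$), whose faces are the stable subsets of the vertex set; Hochster's formula expresses the nonzero graded Betti numbers of $S/I(G)$, hence ${\rm reg}(S/I(G))$, through the reduced simplicial homology of the induced subcomplexes ${\rm Ind}(G)[W]$ with $W\subseteq V(G)$; and since every induced subcomplex of ${\rm Ind}(H)={\rm Ind}(G)[A]$ is an induced subcomplex of ${\rm Ind}(G)$, the maximum defining ${\rm reg}(K[A]/I(H))$ is taken over a subfamily of the one defining ${\rm reg}(S/I(G))$. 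Alternatively, one can prove the restriction inequality by induction on $|V(G)\setminus A|$, removing one vertex $t\notin A$ at a time via the exact sequence $0\to(S/(I(G)\colon t))[-1]\stackrel{\,t\,}{\longrightarrow}S/I(G)\to S/(I(G),t)\to0$ together with the identifications of $S/(I(G),t)$ and $S/(I(G)\colon t)$ with edge-ideal quotients of induced subgraphs of $G$ (isolated vertices handled separately).

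The main obstacle is precisely this restriction inequality: it is the only step that needs genuine homological input, while Steps 1 and 2 are bookkeeping. I would also note that Hochster's formula gives the bound in one stroke: taking $W=A$, the complex ${\rm Ind}(G)[A]={\rm Ind}(H)$ is the join of the $r$ two-point complexes $\{a_i,b_i\}$, that is, the boundary complex of the $r$-dimensional cross-polytope, a triangulation of $S^{r-1}$, so $\widetilde{H}_{r-1}({\rm Ind}(H);K)\neq0$, whence $\beta_{r,2r}(S/I(G))\neq0$ and ${\rm reg}(S/I(G))\geq 2r-r=r$; this bypasses Lemma~\ref{hilbertseries-ci-main} entirely. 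I would nonetheless present the complete-intersection version as the main line, since it ties directly into the machinery already set up in the paper.
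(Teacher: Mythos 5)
The paper gives no proof of this proposition---it is quoted from Katzman---and your argument is essentially the standard one behind that citation: monotonicity of Betti numbers (hence of regularity) under passing to induced subgraphs, obtained from Hochster's formula, combined with the observation that a largest induced matching yields an induced subgraph whose edge ideal is a complete intersection of $r={\rm im}(G)$ quadrics (equivalently, whose independence complex is the cross-polytope sphere $S^{r-1}$), which forces $\beta_{r,2r}(S/I(G))\neq 0$ and ${\rm reg}(S/I(G))\geq 2r-r=r$. The proof is correct as given; the only step I would not rely on as written is the exact-sequence alternative for the restriction inequality (that sketch needs more justification than the displayed sequence provides), but your main Hochster-based route, and in particular the one-stroke argument with $W=A$, does not depend on it.
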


Next we prove Conjecture~\ref{md=1-regularity-conjecture} for edge
ideals of Cohen--Macaulay bipartite graphs. A graph $G$ is called 
{\it Cohen--Macaulay\/} if $S/I(G)$ is Cohen--Macaulay. 

\begin{proposition}\label{md=1-regularity-cmbg} If $I=I(G)$ is the
edge ideal of a Cohen--Macaulay bipartite graph without isolated
vertices, then  
$\delta_I(d)=1$ for $d\geq{\rm reg}(S/I)$.
\end{proposition}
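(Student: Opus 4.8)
The plan is to combine the colon-ideal description of $\delta_I$ in Corollary~\ref{wolmer-obs} with the Herzog--Hibi normal form of a Cohen--Macaulay bipartite graph, and to exhibit, for each $d\ge{\rm reg}(S/I)$, a single monomial realizing the value $1$. Observe first that $I=I(G)$ is radical, that its associated primes are the monomial primes $\mathfrak{p}_C=(t_i\mid t_i\in C)$ attached to the minimal vertex covers $C$ of $G$ (each with $\deg(S/\mathfrak{p}_C)=1$), and that $t_1^d\notin I(G)$, so $\mathfrak{m}^d\not\subset I$ and Corollary~\ref{wolmer-obs} applies: $\delta_I(d)=\min\{\deg(S/(I\colon f))\mid f\in S_d\setminus I\}$. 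By Lemma~\ref{jul11-15-colon}, each zero-divisor $f\in\mathcal{F}_d$ satisfies $\deg(S/(I\colon f))=\#\{C\mid{\rm supp}(f)\cap C=\emptyset\}$. Since $\delta_I(d)\ge 1$ by Theorem~\ref{md-unmixed}(a), it therefore suffices to produce, for every $d\ge{\rm reg}(S/I)$, a monomial $f\in S_d\setminus I$ whose support avoids exactly one minimal vertex cover of $G$.

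By the Herzog--Hibi structure theorem for Cohen--Macaulay bipartite graphs, after relabeling the vertices we may assume $V(G)=\{x_1,\dots,x_g\}\cup\{y_1,\dots,y_g\}$ and that there is a partial order $\preceq$ on $\{1,\dots,g\}$ such that
$$
I(G)=(\,x_iy_j\mid i\preceq j\,);
$$
in particular $x_iy_i\in E(G)$ for all $i$. Let $\Lambda\subseteq\{1,\dots,g\}$ be the (nonempty) set of minimal elements of $\preceq$, fix $i_0\in\Lambda$, and set
$$
f:=\Big(\prod_{i\in\Lambda}x_i\Big)x_{i_0}^{\,d-|\Lambda|}.
$$
Since $\{x_i\mid i\in\Lambda\}$ lies in the stable set $\{x_1,\dots,x_g\}$ of the bipartite graph $G$, no edge of $G$ divides $f$, so $f\notin I$. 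I would next show that $(I\colon f)=(y_1,\dots,y_g)$: for each $k$ there is a minimal element $a\preceq k$, so $x_a$ divides $f$ and the generator $x_ay_k$ of $I(G)$ divides $y_kf$, giving $y_k\in(I\colon f)$; conversely every generator of $I(G)$ involves a $y$-variable, so no monomial in the $x$-variables alone lies in $(I\colon f)$, whence $(I\colon f)\subseteq(y_1,\dots,y_g)$. Thus $(I\colon f)=(y_1,\dots,y_g)$ and $\deg(S/(I\colon f))=1$; equivalently, ${\rm supp}(f)=\{x_i\mid i\in\Lambda\}$ avoids only the minimal vertex cover $\{y_1,\dots,y_g\}$.

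It remains to check that $f$ makes sense as a monomial of degree $d$, i.e. that $|\Lambda|\le{\rm reg}(S/I)$. Because the elements of $\Lambda$ are pairwise incomparable (distinct minimal elements), the only edges of $G$ with both endpoints in $\{x_i,y_i\mid i\in\Lambda\}$ are the edges $x_iy_i$ with $i\in\Lambda$; hence $\{x_iy_i\mid i\in\Lambda\}$ is an induced matching of $G$, and Proposition~\ref{lower-bound-reg-g} yields
$$
{\rm reg}(S/I)\ \ge\ {\rm im}(G)\ \ge\ |\Lambda|.
$$
Therefore, for every $d\ge{\rm reg}(S/I)$ the monomial $f$ above lies in $S_d\setminus I$ and satisfies $\deg(S/(I\colon f))=1$, so $\delta_I(d)\le 1$ by Corollary~\ref{wolmer-obs}; together with Theorem~\ref{md-unmixed}(a) this gives $\delta_I(d)=1$, as required.

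The step I expect to require the most care is the structural input of the second paragraph: recalling the Herzog--Hibi normal form for Cohen--Macaulay bipartite graphs without isolated vertices (which is where one uses that the two sides of the bipartition have equal size, and which yields ${\rm ht}(I(G))=g$), and---if one prefers the vertex-cover bookkeeping of the first paragraph---identifying the minimal vertex covers of $G$ with the complements of the sets $\{y_i\mid i\in T\}\cup\{x_j\mid j\notin T\}$ as $T$ ranges over the order ideals of $(\{1,\dots,g\},\preceq)$. Granting this dictionary, the colon computation and the induced-matching bound are routine.
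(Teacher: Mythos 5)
Your proof is correct, and its core is the same as the paper's: use the Herzog--Hibi structure of a Cohen--Macaulay bipartite graph to produce a monomial supported on the $x$-side whose colon ideal is exactly $(y_1,\dots,y_g)$, and bound its degree by ${\rm reg}(S/I)$ via an induced matching. The differences are in the scaffolding, and they are worth noting. The paper builds a squarefree monomial $x_{i_1}\cdots x_{i_d}$ by a greedy choice making the neighborhoods $N_G(x_{i_j})$ partition $V_2$, computes the colon through the primary decomposition into minimal vertex covers, invokes Kummini's equality ${\rm reg}(S/I)={\rm im}(G)$, and then uses the monotonicity statement Theorem~\ref{md-decreasing} to propagate $\delta_I=1$ from one degree $d\le{\rm im}(G)$ to all $d\ge{\rm reg}(S/I)$. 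You instead take the set $\Lambda$ of minimal elements of the Herzog--Hibi poset (whose neighborhoods cover $V_2$, with no need for disjointness), verify $(I\colon f)=(y_1,\dots,y_g)$ directly from the generators, and pad with a power $x_{i_0}^{\,d-|\Lambda|}$ so that the construction works degree by degree; this lets you dispense with Theorem~\ref{md-decreasing} entirely and replace Kummini's equality by the weaker Katzman inequality ${\rm reg}(S/I)\ge{\rm im}(G)\ge|\Lambda|$ (Proposition~\ref{lower-bound-reg-g}), since the pairwise incomparability of $\Lambda$ makes $\{x_iy_i\mid i\in\Lambda\}$ an induced matching. All the inputs you do use (Corollary~\ref{wolmer-obs} needs $I$ unmixed and $\mathfrak{m}^d\not\subset I$, both supplied by Cohen--Macaulayness and by $f\notin I$; positivity from Theorem~\ref{md-unmixed}(a)) are correctly justified, and the poset form of the structure theorem you flag is indeed equivalent to conditions (a)--(c) cited in the paper, so the argument stands.
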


\begin{proof} By \cite[Theorem~1.1]{kummini}, ${\rm reg}(S/I)={\rm im}(G)$. Thus, by
Theorem~\ref{md-decreasing}, it suffices to show that $\delta_I(d)=1$
for some $d\leq {\rm im}(G)$. 
According to \cite[Theorem~3.4]{herzog-hibi-crit},
there is a bipartition $V_1=\{x_1,\ldots,x_g\}$,
$V_2=\{y_1,\ldots,y_g\}$ of $G$ such that: 

\smallskip

\noindent {\rm (a)} $e_i=\{x_i,y_i\}\in E(G)$ for all $i$, 

\smallskip

\noindent{\rm (b)} if $\{x_i,y_j\}\in E(G)$, then $i\leq j$, and 

\smallskip

\noindent {\rm (c)} if $\{x_i,y_j\}$, $\{x_j,y_k\}$ are in $E(G)$ and $i<j<k$, 
then $\{x_i,y_k\}\in E(G)$.

\smallskip

Next we construct a sequence $x_{i_1},\ldots,x_{i_d}$ such that 
$e_{i_1},\ldots,e_{i_d}$ form an induced matching and $V_2$ is a
pairwise disjoint union   
\begin{equation}\label{mar1-16}
V_2=N_G(x_{i_1})\cup\cdots\cup N_G(x_{i_d}),
\end{equation}
where $N_G(x_{i_j})\cap N_G(x_{i_k})=\emptyset$ for $j\neq k$ and 
$N_G(x_{i_j})$ is
the neighbor set of $x_{i_j}$, that is, $N_G(x_{i_j})$ is the set of vertices of $G$
adjacent to $x_{i_j}$. We set $i_1=1$. If $N_G(x_{i_1})\subsetneq V_2$, pick
$y_{i_2}$ in $V_2\setminus N_G(x_{i_1})$. By condition (b),
$e_{i_1},e_{i_2}$ is an induced matching and $N_G(x_{i_1})\cap
N_G(x_{i_2})=\emptyset$. If $N_G(x_{i_1})\cup N_G(x_{i_2})\subsetneq V_2$, pick
$y_{i_3}$ in $V_2\setminus(N_G(x_{i_1})\cup N_G(x_{i_2}))$. By condition (b),
$e_{i_1},e_{i_2},e_{i_3}$ form an induced matching and $N_G(x_{i_j})\cap
N_G(x_{i_k})=\emptyset$ for $j\neq k$. Thus one can continue this process until we get
a sequence $x_{i_1},\ldots,x_{i_d}$ such that 
$V_2$ is the disjoint union of the $N_G(x_{i_j})$'s and the
$e_{i_j}$'s form an induced matching. 

Let
$\mathfrak{p}_1,\ldots,\mathfrak{p}_m$ be the associated primes 
of $I$. There are minimal vertex covers $C_1,\ldots,C_m$ of $G$ such
that $\mathfrak{p}_i$ is generated by $C_i$ for $i=1,\ldots,m$ (see
\cite[p.~279]{Vi2}). We may
assume that $C_m=V_2$. 
Setting $x^a=x_{i_1}\cdots x_{i_d}$, by Corollary~\ref{wolmer-obs}, it
suffices to show that
$x^a$ is in $\cap_{i=1}^{m-1}\mathfrak{p}_i\setminus\mathfrak{p}_m$ and 
that $\deg(S/(I\colon x^a))=1$, where $S=K[V(G)]$. If $i\neq m$, there is 
$y_\ell\notin C_i$. From Eq.~(\ref{mar1-16}), there is $x_{i_j}$ such
that $y_\ell\in N_G(x_{i_j})$ for some $i_j$. Hence, as $C_i$ covers the
edge $\{x_{i_j},y_\ell\}$, one has that $x_{i_j}$ is in
$\mathfrak{p}_i$. Thus $x^a$ is in $\cap_{i=1}^{m-1}\mathfrak{p}_i$
and $x^a$ is not in $\mathfrak{p}_m$ because
$\mathfrak{p}_m=(y_1,\ldots,y_g)$. Therefore
$$
(I\colon x^a)=(\mathfrak{p}_1\cap\cdots\cap\mathfrak{p}_m\colon x^a)=
(\mathfrak{p}_1\colon x^a)\cap\cdots\cap(\mathfrak{p}_m\colon
x^a)=\mathfrak{p}_m.
$$
Hence $\deg(S/(I\colon x^a))=1$, as required.
\end{proof}

\section{Complete intersections}\label{ci-section}

Let $S=K[t_1,\ldots,t_s]=\oplus_{d=0}^{\infty} S_d$ be a polynomial ring over a field $K$ with
the standard grading and let $\prec$ be a graded monomial order. 

\begin{proposition}\label{jan2-16} 
Let $I\subset S$ be a graded ideal and let $\prec$ be a
monomial order. Suppose that ${\rm in}_\prec(I)$ is a complete intersection of
height $r$ generated by 
$t^{\alpha_1},\ldots,t^{\alpha_r}$ with $d_i=\deg(t^{\alpha_i})$ and 
$d_i\geq 1$ for all $i$. The following hold.
\begin{itemize}
\item[\rm(a)] {\rm \cite[Example~1.5.1]{Migliore}} 
$I$ is a complete intersection and $\dim(S/I)=s-r$.
\item[\rm(b)] $\deg(S/I)=d_1\cdots d_r$ and 
${\rm reg}\, S/I=\sum_{i=1}^r(d_i-1)$. 
\item[\rm(c)] $1\leq {\rm fp}_I(d)\leq \delta_I(d)$ for $d\geq 1$.
\end{itemize}
\end{proposition}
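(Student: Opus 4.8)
The plan is to prove the three parts of Proposition~\ref{jan2-16} in order, leveraging the fact that all the heavy lifting has already been set up in Section~\ref{prelim-section} and in the earlier results of this excerpt.

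For part~(a), the point is that a monomial complete intersection is in particular a set of polynomials whose leading terms form a regular sequence; I would invoke the standard fact (recorded in the cited \cite[Example~1.5.1]{Migliore}) that if $\mathrm{in}_\prec(I)$ is generated by a regular sequence of $r$ monomials, then the $r$ corresponding elements $g_1,\dots,g_r$ of a Gr\"obner basis of $I$ already generate $I$ and form a homogeneous regular sequence of length $r = \mathrm{ht}(\mathrm{in}_\prec(I)) = \mathrm{ht}(I)$. Hence $I$ is a complete intersection by Definition~\ref{ci-def}, and $\dim(S/I) = s - r$ since the height of a complete intersection equals the number of its generators.

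For part~(b), I would first observe that passing to the initial ideal preserves the Hilbert function, so $\deg(S/I) = \deg(S/\mathrm{in}_\prec(I))$; since $\mathrm{in}_\prec(I)$ is a complete intersection generated by monomials of degrees $d_1,\dots,d_r$, Lemma~\ref{hilbertseries-ci-main} gives $\deg(S/\mathrm{in}_\prec(I)) = d_1\cdots d_r$. For the regularity, I would apply Lemma~\ref{hilbertseries-ci-main} directly to $I$ itself — by part~(a) it is a complete intersection generated by forms $g_1,\dots,g_r$ with $\deg(g_i) = \deg(\mathrm{in}_\prec(g_i)) = d_i$ — so $\mathrm{reg}(S/I) = \sum_{i=1}^r (d_i - 1)$.

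For part~(c), the inequality $\mathrm{fp}_I(d) \le \delta_I(d)$ is exactly Proposition~\ref{md-unmixed-propo}(a), so it suffices to check that $I$ is unmixed, which holds because a complete intersection ideal is unmixed (it is Cohen--Macaulay). For the lower bound $\mathrm{fp}_I(d) \ge 1$, I would note that $\mathrm{in}_\prec(I)$, being a monomial complete intersection, is unmixed, so Theorem~\ref{md-unmixed}(b) applies and yields $\mathrm{fp}_I(d) \ge 1$ for all $d \ge 1$. I do not anticipate a genuine obstacle here; the only point requiring a little care is the clean transfer of "complete intersection" between $I$ and $\mathrm{in}_\prec(I)$ in part~(a) — specifically checking that the chosen Gr\"obner basis elements number exactly $r$ and really do generate $I$ — but this is precisely the content of the cited reference and can be stated without reworking it.
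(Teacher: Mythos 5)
Your proposal is correct and follows essentially the same route as the paper: lifting the monomial generators to a homogeneous Gr\"obner basis of $I$ to see that $I$ is a complete intersection of height $r$, applying Lemma~\ref{hilbertseries-ci-main} for the degree and regularity, and deducing (c) from Proposition~\ref{md-unmixed-propo}(a) and Theorem~\ref{md-unmixed}(b) after noting that both $I$ and ${\rm in}_\prec(I)$ are Cohen--Macaulay, hence unmixed. The minor variations (citing \cite[Example~1.5.1]{Migliore} for (a), and computing the degree via invariance of the Hilbert function under passage to the initial ideal) do not change the substance of the argument.
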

\begin{proof} (a): The rings $S/I$ and $S/{\rm init}_\prec(I)$ have the same
dimension. Thus $\dim(S/I)=s-r$. As $\prec$ is a graded order, there are 
$f_1,\ldots,f_r$ homogeneous polynomials in $I$ with 
${\rm in}_\prec(f_i)=t^{\alpha_i}$ for $i\geq 1$. Since 
$$
{\rm in}_\prec(I)=({\rm in}_\prec(f_1),\ldots,{\rm in}_\prec(f_r)),
$$
the polynomials $f_1,\ldots,f_r$ form a Gr\"obner basis of $I$, and in
particular they generated $I$. Hence $I$ is a graded ideal of height
$r$ generated by $r$ polynomials, that is, $I$ is a complete intersection.

(b): This follows at once from part (a) and 
Lemma~\ref{hilbertseries-ci-main}. 

(c): By part (a), $I$ is a complete
intersection. In particular $I$ is a Cohen--Macaulay unmixed ideal. Hence this part follows from
Proposition~\ref{md-unmixed-propo} and Theorem~\ref{md-unmixed}. 
\end{proof}

\begin{lemma}\label{reducing-exponents-lemma} 
Let $I\subset S$ be a complete intersection ideal 
minimally generated by $t^{\alpha_1},\ldots,t^{\alpha_r}$ and let
$t^a=t_1^{a_1}\cdots t_s^{a_s}$ be a zero-divisor of $S/I$ not in $I$. 
The following hold.
\begin{itemize}
\item[\rm(a)] $t^{\alpha_i}$ and $t^{\alpha_j}$ have no common
variable for $i\neq j$.
\item[\rm(b)] If $t_j^{a_j}$ is regular on $S/I$ and
$t^c=t^a/t_j^{a_j}$, then
$(I\colon t^a)=(I\colon t^c)$.
\item[\rm(c)] If $t_j$ is a zero-divisor of $S/I$, then there is 
a unique $\alpha_i=(\alpha_{i,1},\ldots,\alpha_{i,s})$ such that
$\alpha_{i,j}>0$, that is, $t_j$ occurs in exactly one
$t^{\alpha_i}$. If $a_j>\alpha_{i,j}$ and $t^c=t^a/t_j$, then 
$(I\colon t^a)=(I\colon t^c)$.
\item[\rm(d)] For each $i$ there is 
$t^{\beta_i}$ 
dividing $t^{\alpha_i}$ such that 
$\deg(t^{\beta_i})<\deg(t^{\alpha_i})$ and $(I\colon t^a)=(I\colon t^\beta)$,
where $t^\beta=t^{\beta_1}\cdots t^{\beta_r}$. 
\end{itemize} 
\end{lemma}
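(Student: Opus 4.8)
The plan is to exploit the product structure of a monomial complete intersection. Since $I=(t^{\alpha_1},\dots,t^{\alpha_r})$ is minimally generated by a regular sequence of monomials, part (a) is the engine for everything else: if $t^{\alpha_i}$ and $t^{\alpha_j}$ shared a variable $t_k$, then (after dividing by the relevant power of $t_k$) one of them would divide a proper multiple of the other, contradicting that $t^{\alpha_i},t^{\alpha_j}$ form part of a regular sequence on $S$ (a monomial regular sequence must have pairwise coprime supports — this is the standard characterization, cf.\ the remark after Definition~\ref{ci-def}). So I would first record (a) directly from that characterization.

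For part (b), the key observation is the colon formula for monomial ideals: $(I\colon t^a)$ is the monomial ideal generated by $t^{\alpha_i}/\gcd(t^{\alpha_i},t^a)$ for $i=1,\dots,r$. If $t_j^{a_j}$ is regular on $S/I$, then $t_j$ does not occur in any generator $t^{\alpha_i}$ that is "relevant" — more precisely, $t_j$ divides $t^{\alpha_i}/\gcd(t^{\alpha_i},t^a)$ for no $i$, equivalently $\gcd(t^{\alpha_i},t^a)=\gcd(t^{\alpha_i},t^c)$ for every $i$ where $t^c=t^a/t_j^{a_j}$; indeed if some $t^{\alpha_i}$ involved $t_j$ at all, then by (a) it is the only generator doing so, and dropping the full power $t_j^{a_j}$ from $t^a$ would only change $\gcd(t^{\alpha_i},t^a)$, but the hypothesis that $t_j^{a_j}$ is regular forces $\alpha_{i,j}\le$ something making this change vanish — here I must be a little careful, and the cleanest route is: $t_j^{a_j}$ regular on $S/I$ means $t_j^{a_j}\notin\sqrt{I}$-type condition fails in the right way, i.e.\ no generator is "killed", so $\gcd(t^{\alpha_i},t^a)=\gcd(t^{\alpha_i},t^c)$ for all $i$, giving $(I\colon t^a)=(I\colon t^c)$. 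Part (c) is the analogue with $t_j$ a zero-divisor: now exactly one $t^{\alpha_i}$ involves $t_j$ (uniqueness is immediate from (a)), and if $a_j>\alpha_{i,j}$ then $\gcd(t^{\alpha_i},t^a)=t_j^{\alpha_{i,j}}\cdot(\text{stuff})=\gcd(t^{\alpha_i},t^c)$ for $t^c=t^a/t_j$, and for $k\ne i$ the variable $t_j$ doesn't appear in $t^{\alpha_k}$ at all so its gcd with $t^a$ is unaffected; hence the colon ideals agree.

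Part (d) is then a clean iteration of (b) and (c). Fix $i$. By (a) the variables occurring in $t^{\alpha_i}$ occur in no other generator, so they are "independent" of the other $t^{\alpha_j}$. For each variable $t_j$ occurring in $t^{\alpha_i}$: if $t_j$ is regular on $S/I$, replace $t^a$ by $t^a/t_j^{a_j}$ using (b); if $t_j$ is a zero-divisor (which by (c) forces $t^{\alpha_i}$ to be its unique generator) and $a_j\ge\alpha_{i,j}$, repeatedly divide by $t_j$ using (c) until the exponent of $t_j$ drops to $\alpha_{i,j}-1$ — but I must be more careful here, because once $a_j<\alpha_{i,j}$ the monomial $t^{\alpha_i}$ no longer divides $t^a$ in that variable. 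The right target is: for each $i$, among the variables in the support of $t^{\alpha_i}$, there is at least one, say $t_{j_0}$, with the property that after the reductions the exponent of $t_{j_0}$ in $t^a$ can be taken to be strictly less than $\alpha_{i,j_0}$ — indeed since $t^a$ is a zero-divisor not in $I$, $t^{\alpha_i}$ does not divide $t^a$, so some variable $t_{j_0}$ of $t^{\alpha_i}$ already has $a_{j_0}<\alpha_{i,j_0}$; set $t^{\beta_i}=t^{\alpha_i}/t_{j_0}$, which divides $t^{\alpha_i}$ with strictly smaller degree, and check $\gcd(t^{\alpha_k},t^a)$ for all $k$ is computed from the $t_j$-exponents of $t^a$ which we have normalized. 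Collecting $t^\beta=t^{\beta_1}\cdots t^{\beta_r}$ and using (a) (so the supports are disjoint and the gcds multiply independently), one verifies $(I\colon t^a)=(I\colon t^\beta)$ generator by generator.

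The main obstacle I anticipate is getting the bookkeeping in (d) exactly right: one needs $t^{\beta_i}$ to \emph{divide} $t^{\alpha_i}$ with strictly smaller degree \emph{and} to have $(I\colon t^a)=(I\colon t^\beta)$ simultaneously, and these pull in opposite directions — making $t^{\beta_i}$ small risks changing the colon ideal. The resolution is that the colon ideal $(I\colon t^a)$ only sees, in the $i$-th generator, the quantity $\max(\alpha_{i,j}-a_j,0)$ for each $j$ in the support of $t^{\alpha_i}$; replacing $t^a$ in those coordinates by the truncated exponents $\min(a_j,\alpha_{i,j})$ (or $\min(a_j,\alpha_{i,j}-1)$ when we must strictly decrease), which is exactly what (b) and (c) license, leaves every $\max(\alpha_{k,j}-a_j,0)$ unchanged for all $k$, and this is where disjointness of supports from (a) is used crucially so that the $r$ normalizations don't interfere. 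Once that invariant is isolated, (d) is a routine but careful verification.
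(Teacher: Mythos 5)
Parts (a)--(c) of your proposal are essentially correct and close in spirit to the paper's proof: the paper proves (b) and (c) by directly cancelling the regular factor (resp.\ one copy of $t_j$) in an equation $t^\delta t^a=t^\gamma t^{\alpha_i}$, while you route everything through the formula $(I\colon t^a)=(\{t^{\alpha_i}/\gcd(t^{\alpha_i},t^a)\}_{i=1}^r)$; that is a legitimate alternative, but your hedged justification in (b) should be replaced by the precise fact you need: since the associated primes of the monomial complete intersection are generated by variables occurring in the $t^{\alpha_i}$, a power $t_j^{a_j}$ (with $a_j>0$) is regular on $S/I$ exactly when $t_j$ occurs in \emph{no} generator, whence $\gcd(t^{\alpha_i},t^a)=\gcd(t^{\alpha_i},t^c)$ for every $i$. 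The paper's cancellation argument sidesteps even this.

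Part (d), however, contains a genuine error. The monomial you actually define, $t^{\beta_i}=t^{\alpha_i}/t_{j_0}$ where $a_{j_0}<\alpha_{i,j_0}$, does \emph{not} in general satisfy $(I\colon t^a)=(I\colon t^\beta)$. Take $I=(t_1t_2^2)\subset K[t_1,t_2,t_3]$ and $t^a=t_2$ (a zero-divisor not in $I$, already ``normalized'', so your reductions do nothing): then $(I\colon t^a)=(t_1t_2)$, but your recipe gives $t^\beta=t^{\beta_1}=t_2^2$ or $t_1t_2$, with $(I\colon t_2^2)=(t_1)$ and $(I\colon t_1t_2)=(t_2)$, neither equal to $(t_1t_2)$. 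The same problem infects the parenthetical ``truncate to $\min(a_j,\alpha_{i,j}-1)$ when we must strictly decrease'': part (c) only licenses dividing by $t_j$ while $a_j>\alpha_{i,j}$, and lowering an exponent from $\alpha_{i,j}$ to $\alpha_{i,j}-1$ changes $\max(\alpha_{i,j}-a_j,0)$ from $0$ to $1$, hence changes the colon ideal. The correct choice---which your final ``invariant'' paragraph almost states---is $t^{\beta_i}=\gcd(t^a,t^{\alpha_i})$, i.e.\ exponent $\min(a_j,\alpha_{i,j})$ on each variable of $t^{\alpha_i}$; this is exactly the monomial produced by iterating (b) and (c) (drop regular variables entirely, lower zero-divisor exponents only down to $\alpha_{i,j}$), so $(I\colon t^a)=(I\colon t^\beta)$ with $t^\beta=t^{\beta_1}\cdots t^{\beta_r}$, and no further ``strict decrease'' is needed: the strict inequality $\deg(t^{\beta_i})<\deg(t^{\alpha_i})$ is automatic because $t^a\notin I$ means $t^{\alpha_i}$ does not divide $t^a$, so $\gcd(t^a,t^{\alpha_i})$ is a proper divisor of $t^{\alpha_i}$. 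This is precisely how the paper argues (d); with that replacement your proof goes through.
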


\begin{proof} (a): This follows readily from the Krull principal
ideal theorem \cite[Theorem~2.3.16]{monalg-rev}. 

(b): The inclusion ``$\supset$'' is clear. To show the reverse
inclusion take $t^\delta$ in $(I\colon t^a)$, that is, $t^\delta
t^a=t^\delta t_j^{a_j}t^c$ is in $I$. Hence $t^\delta t^c$ is in $I$
because $t_j^{a_j}$ is regular on $S/I$. Thus $t^\delta$ is in
$(I\colon t^c)$.

(c): If $t_j$ is a zero-divisor of $S/I$, then $t_j$ is in some
associated prime of $S/I$. Hence, by part (a), $t_j$ must occur in a
unique $t^{\alpha_i}$ for some $i$. Thus one has $\alpha_{i,j}>0$. We
claim that $((t^{\alpha_k})\colon t^a)=((t^{\alpha_k})\colon t^c)$ for
all $k$. If $k\neq i$, by part (a), $t_j$ is regular on
$S/(t^{\alpha_k})$. Thus, as in the proof of part (b), we get the
asserted equality. Next we assume that $k=i$. The inclusion ``$\supset$'' is clear. To show the reverse
inclusion take $t^\delta$ in $((t^{\alpha_i})\colon t^a)$, that is, $t^\delta
t^a=t^\gamma t^{\alpha_i}$ for some $t^\gamma$. Since
$a_j>\alpha_{i,j}>0$, $t_j$ must divide $t^\gamma$. Then we can write
$t^\delta t^c=t^\omega t^{\alpha_i}$, where $t^\omega=t^\gamma/t_j$.
Thus $t^\delta$ is in $((t^{\alpha_i})\colon t^c)$. This completes the
proof of the claim. Therefore one has
\begin{eqnarray*}
(I\colon t^a)&=&((t^{\alpha_1})\colon
t^a)+\cdots+((t^{\alpha_r})\colon t^a)\\ 
&=&((t^{\alpha_1})\colon t^c)+\cdots+((t^{\alpha_r})\colon
t^c)=(I\colon t^c).
\end{eqnarray*}

(d): Using part (a) and successively applying parts (b) and (c) to
$t^a$, we get a monomial $t^\beta$ that divides $t^a$ such that the
following conditions are satisfied: 
(i) all variables that occur in $t^\beta$ are zero-divisors of $S/I$,
(ii) if $t^\beta=t_1^{\gamma_1}\cdots t_s^{\gamma_s}$ and
$\gamma_j>0$, then $\alpha_{i,j}\geq \gamma_j$, where $t^{\alpha_i}$ is
the unique monomial, among $t^{\alpha_1},\ldots,t^{\alpha_r}$,
containing $t_j$, and 
(iii) $(I\colon t^a)=(I\colon t^\beta)$. We let $t^{\beta_i}$ be the
product of all $t_j^{\gamma_j}$ such that $t_j$ occurs in
$t^{\alpha_i}$. Clearly $t^{\beta_i}$ divides $t^{\alpha_i}$, and
$\deg(t^{\alpha_i})>\deg(t^{\beta_i})$ because $t^a$ 
is not in $I$ by hypothesis.
\end{proof}

The next result gives some additional support to
Conjecture~\ref{md=1-regularity-conjecture}. 

\begin{proposition}\label{footprint-monomial-ci-ineq} 
Let $I\subset S$ be a complete intersection monomial ideal of dimension $\geq 1$
minimally generated by $t^{\alpha_1},\ldots,t^{\alpha_r}$. If
$d_i=\deg(t^{\alpha_i})$ for $i=1,\ldots,r$. The following hold. 
\begin{itemize}
\item[\rm(a)] ${\rm reg}(S/I)=\sum_{i=1}^r(d_i-1)$,
\item[\rm(b)] $\delta_I(d)=1$ if $d\geq {\rm reg}(S/I)$,
\item[\rm(c)] $\delta_I(d)\leq\left(d_{k+1}-\ell\right)d_{k+2}\cdots d_r$
if $d<{\rm reg}(S/I)$, 
where $0\leq k\leq r-1$ and $\ell$ are integers such that 
$d=\sum_{i=1}^{k}\left(d_i-1\right)+\ell$ and $1\leq \ell \leq
d_{k+1}-1$. 
\end{itemize}
\end{proposition}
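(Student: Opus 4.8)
The plan is to route the entire argument through a single degree computation for colon ideals and then to exhibit optimal test monomials. Part~(a) needs nothing new: a monomial complete intersection is a complete intersection, so this is exactly the regularity formula of Proposition~\ref{jan2-16}(b) (equivalently Lemma~\ref{hilbertseries-ci-main}).

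The key preliminary step is a formula for $\deg(S/(I\colon t^a))$ when $t^a=t_1^{a_1}\cdots t_s^{a_s}$ is a monomial with $t^a\notin I$. By Lemma~\ref{reducing-exponents-lemma}(a) the generators $t^{\alpha_1},\dots,t^{\alpha_r}$ are pairwise coprime, and a standard computation with monomial ideals gives $(I\colon t^a)=(m_1,\dots,m_r)$ where $m_i:=t^{\alpha_i}/\gcd(t^{\alpha_i},t^a)$. The monomials $m_i$ have pairwise disjoint supports, and none is constant because $t^a\notin I$ means $t^{\alpha_i}\nmid t^a$ for every $i$; hence they form a regular sequence, and $(I\colon t^a)$ is again a complete intersection of height $r$ generated in degrees $d_i-\deg\gcd(t^{\alpha_i},t^a)$. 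Lemma~\ref{hilbertseries-ci-main} then gives
\[
\deg\bigl(S/(I\colon t^a)\bigr)=\prod_{i=1}^{r}\bigl(d_i-\deg\gcd(t^{\alpha_i},t^a)\bigr),
\]
and each factor is $\ge 1$, which is precisely the condition $t^a\notin I$. Since $\dim(S/I)\ge 1$, no power of $\mathfrak{m}$ lies in $I$, and $I$, being a complete intersection, is unmixed; so Corollary~\ref{wolmer-obs} applies and
\[
\delta_I(d)=\min\bigl\{\,\textstyle\prod_{i=1}^{r}\bigl(d_i-\deg\gcd(t^{\alpha_i},t^a)\bigr)\ :\ t^a\in S_d\setminus I\,\bigr\}.
\]

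Given this, part~(c) follows from one test monomial. For $1\le d<{\rm reg}(S/I)=\sum_{i=1}^r(d_i-1)$ one takes $k$ to be the largest index with $\sum_{i=1}^{k}(d_i-1)<d$ and $\ell=d-\sum_{i=1}^{k}(d_i-1)$, so that $1\le\ell\le d_{k+1}-1$. Fix a variable $t_{j_i}$ dividing $t^{\alpha_i}$ for $i\le k$ and a divisor $w$ of $t^{\alpha_{k+1}}$ of degree $\ell$; then $t^a:=\bigl(\prod_{i=1}^{k}t^{\alpha_i}/t_{j_i}\bigr)w$ has degree $d$, lies in $S_d\setminus I$, and $\deg\gcd(t^{\alpha_i},t^a)$ equals $d_i-1$ for $i\le k$, equals $\ell$ for $i=k+1$, and equals $0$ for $i>k+1$, so the displayed minimum gives $\delta_I(d)\le (d_{k+1}-\ell)d_{k+2}\cdots d_r$. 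For part~(b), with $d\ge{\rm reg}(S/I)$, start from $u:=\prod_{i=1}^{r}t^{\alpha_i}/t_{j_i}$, of degree ${\rm reg}(S/I)$, for which $\deg\gcd(t^{\alpha_i},u)=d_i-1$ for all $i$, hence $\deg(S/(I\colon u))=1$. To reach degree $d$ I absorb the surplus $e=d-{\rm reg}(S/I)$: if some variable $t_\ell$ occurs in none of the $t^{\alpha_i}$, I replace $u$ by $u\,t_\ell^{e}$; otherwise the supports of the $t^{\alpha_i}$ exhaust $t_1,\dots,t_s$, so that $\sum_i|{\rm supp}(t^{\alpha_i})|=s>r$ (using $\dim(S/I)=s-r\ge1$) and some $t^{\alpha_i}$ involves a variable $t_p$ other than $t_{j_i}$, and I replace the block $t^{\alpha_i}/t_{j_i}$ in $u$ by $(t^{\alpha_i}/t_{j_i})\,t_p^{e}$, keeping the exponent of $t_{j_i}$ below its value in $t^{\alpha_i}$. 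In either case every gcd factor stays equal to $1$, so $\delta_I(d)\le1$, and $\delta_I(d)=1$ by Proposition~\ref{jan2-16}(c).

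The substantive content is the colon-ideal degree formula; once it is in place, (b) and (c) amount to writing down the right monomial. The step I expect to be fussy is the surplus-degree bookkeeping in (b) when every variable appears among the generators --- this is where $\dim(S/I)\ge 1$ is used a second time --- together with the routine verifications that each test monomial genuinely lies outside $I$ and that $(I\colon t^a)$ really has height $r$, so that Lemma~\ref{hilbertseries-ci-main} is applicable.
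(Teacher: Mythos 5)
Your argument is correct, and its skeleton---bounding $\delta_I(d)$ above by $\deg(S/(I\colon t^a))$ for an explicit monomial $t^a\in S_d\setminus I$ via Corollary~\ref{wolmer-obs}, and computing that degree from the complete intersection structure of the colon ideal together with Lemma~\ref{hilbertseries-ci-main}---is the same as the paper's; in particular your test monomial for (c) is exactly the one used there, and (a) is quoted identically. Where you genuinely diverge is in (b): the paper exhibits a witness only in degree ${\rm reg}(S/I)$ (namely $\prod_i t^{\alpha_i}/t_{j_i}$) and then extends to all $d\geq{\rm reg}(S/I)$ by the monotonicity statement Theorem~\ref{md-unmixed}(c), which forces it to verify separately that $\mathcal{F}_d\neq\emptyset$ for every $d$ and to treat the case of a prime $I$ apart; you instead pad the witness to each degree $d$ directly, using $\dim(S/I)\geq 1$ to find either a variable outside all the supports or a generator involving at least two variables, and your exponent bookkeeping there is sound (each $\gcd$ with $t^{\alpha_i}$ keeps degree $d_i-1$ and the padded monomial stays outside $I$), so you avoid Theorem~\ref{md-unmixed}(c) and the nonemptiness discussion at the cost of a slightly fussier construction. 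One small overstatement: the displayed equality $\delta_I(d)=\min\{\prod_{i=1}^{r}(d_i-\deg\gcd(t^{\alpha_i},t^a))\colon t^a\in S_d\setminus I\}$ does not follow from Corollary~\ref{wolmer-obs} alone, since that corollary takes the minimum over all forms of degree $d$, not just monomials; the equality is in fact true (combine Proposition~\ref{unmixed-monomial-geil-carvalho} with Corollary~\ref{jul1-16}), but in any case you only ever use the inequality $\delta_I(d)\leq\deg(S/(I\colon t^a))$, which Corollary~\ref{wolmer-obs} does give, so no gap results.
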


\begin{proof} (a): This follows at once from 
Lemma~\ref{hilbertseries-ci-main}. 

(b): By Lemma~\ref{reducing-exponents-lemma}(a) the monomials $t^{\alpha_i}$ and
$t^{\alpha_j}$ have no common variables for $i\neq j$. For 
each $i$ pick $t_{j_i}$ in $t^{\alpha_i}$. If $I$ is prime, then
$I=(t_{j_1},\ldots,t_{j_r})$, ${\rm reg}(S/I)=0$, 
$\mathcal{F}_d=\emptyset$ and $\delta_I(d)=1$ for $d\geq 1$.
Thus we may assume that $I$ is not prime. We claim that
$\mathcal{F}_d\neq\emptyset$ for $d\geq 1$. As $I$ is not prime, there is 
$m$ such that $t_{j_m}$ a zero-divisor of $S/I$ not in $I$.  
If a variable $t_n$ is not in $t^{\alpha_i}$ for any $i$, then $t_n$
is a regular element on $S/I$, and $\mathcal{F}_d\neq\emptyset$ 
because $t_{j_m}t_n^{d-1}$ is in $\mathcal{F}_d$. If
any variable $t_n$ is in $t^{\alpha_i}$ for some $i$, then any
monomial of degree $d$ is a zero-divisor of $S/I$ because any 
variable $t_n$ belongs to at least one associated prime of $S/I$.
As $\dim(S/I)\geq 1$, one has $\mathfrak{m}^d\not\subset I$. Pick a
monomial $t^a$ of degree $d$ not in $I$. Then
$\mathcal{F}_d\neq\emptyset$ because $t^a$ is in $\mathcal{F}_d$. 
This completes the proof of the claim. 
We set $t^{c_i}=t^{\alpha_i}/t_{j_i}$ for
$i=1,\ldots,r$ and
$t^c=t^{c_1}\cdots t^{c_r}$. Then it is seen that 
$(I\colon t^c)=(t_{j_1},\ldots,t_{j_r})$ and $\deg S/(I\colon
t^c)=1$. Notice that $t^c$ is a zero-divisor of $S/I$, 
$t^c\notin I$ and $\deg(t^c)={\rm reg}(S/I)$. 
Hence, by Corollary~\ref{wolmer-obs}, we get that
$\delta_I(d)=1$ for $d={\rm reg}(S/I)$. Thus, by
Theorem~\ref{md-unmixed}(c), 
we get $\delta_I(d)=1$ for $d\geq {\rm reg}(S/I)$.

(c): There is a monomial $t^a$ of degree $\ell$ 
that divides $t^{\alpha_{k+1}}$ because $\ell$ is a positive integer 
less than or equal to $d_{k+1}-1$. Setting $t^c=t^{c_1}\cdots
t^{c_k}t^{a}$ and $t^\gamma=t^{\alpha_{k+1}}/t^a$, one has 
$$(I\colon
t^c)=(t_{j_1},\ldots,t_{j_k},t^{\gamma},t^{\alpha_{k+2}},\ldots,t^{\alpha_r}).
$$ 
Hence, by Lemma~\ref{hilbertseries-ci-main}, we get 
$\deg S/(I\colon t^c)=(d_{k+1}-\ell)d_{k+2}\cdots d_r$ 
because $(I\colon t^c)$ is a complete intersection. 
Since $\deg(t^c)=d=\sum_{i=1}^{k}\left(d_i-1\right)+\ell$, $t^c$ is
not in $I$, and $t^c$ is a zero-divisor of $S/I$, by
Corollary~\ref{wolmer-obs} we get 
that $\deg S/(I\colon t^c)\geq \delta_I(d)$, as required. 
\end{proof}

\begin{proposition}{\rm \cite[Proposition~5.7]{hilbert-min-dis}}\label{aug-28-15}
Let $1\leq e_1\leq\cdots\leq e_m$ and $0\leq b_i\leq e_i-1$
for $i=1,\ldots,m$ be integers. If $b_0\geq 1$, then 
\begin{equation}\label{aug-27-15-2}
\prod_{i=1}^m(e_i-b_i)\geq
\left(\sum_{i=1}^{k+1}(e_i-b_i)-(k-1)-b_0-\sum_{i=k+2}^m b_i\right)e_{k+2}\cdots e_m
\end{equation}
for $k=0,\ldots,m-1$, where $e_{k+2}\cdots e_m=1$ and
$\sum_{i=k+2}^mb_i=0$ if $k=m-1$.
\end{proposition}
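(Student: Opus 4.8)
The plan is to recast inequality~(\ref{aug-27-15-2}) in more tractable variables and then prove it by a multilinearity reduction followed by an induction.

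First I would substitute $a_i:=e_i-b_i$, so that $1\le a_i\le e_i$ for $i=1,\dots,m$ and (\ref{aug-27-15-2}) becomes
\[
\prod_{i=1}^m a_i\ \ge\ \left(\sum_{i=1}^{k+1}a_i-(k-1)-b_0-\sum_{i=k+2}^m b_i\right)\prod_{i=k+2}^m e_i .
\]
Since the left-hand side is a product of positive integers, there is nothing to prove when the bracketed factor is $\le0$; so assume it is $\ge1$. The right-hand side is non-increasing in $b_0$ while the left-hand side is independent of $b_0$, so it suffices to treat $b_0=1$, where the bracket is $\sum_{i=1}^{k+1}a_i-k-\sum_{i=k+2}^m b_i$.

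Next, fix $e_1\le\cdots\le e_m$ and $k$ and view the difference (left side minus right side) as a function of $(b_1,\dots,b_m)$ on the box $\prod_{i=1}^m[0,e_i-1]$. It is affine in each $b_i$ separately, hence its minimum over the box is attained at a vertex, i.e.\ with $b_i\in\{0,e_i-1\}$ --- equivalently $a_i\in\{e_i,1\}$ --- for every $i$. Setting $T=\{\,i\le k+1:a_i=e_i\,\}$, $U=\{\,i\ge k+2:a_i=1\,\}$ and $A_2=\{\,i\ge k+2:a_i=e_i\,\}$, and using $\sum_{i=1}^{k+1}a_i=\sum_{i\in T}e_i+(k+1-|T|)$ and $\sum_{i=k+2}^m b_i=\sum_{i\in U}(e_i-1)$, a direct computation (with $b_0=1$) rewrites the difference as
\[
\left(\prod_{i\in A_2}e_i\right)\left[\ \prod_{i\in T}e_i-\left(1+\sum_{i\in T}(e_i-1)-\sum_{i\in U}(e_i-1)\right)\prod_{i\in U}e_i\ \right].
\]
Putting $\{u_1,\dots,u_p\}=\{e_i\}_{i\in T}$ and $\{v_1,\dots,v_q\}=\{e_i\}_{i\in U}$ --- so that $u_i\le e_{k+1}\le e_{k+2}\le v_j$, in particular $\max_i u_i\le\min_j v_j$ --- it remains to prove the self-contained inequality
\[
(\ast)\qquad\prod_{i=1}^p u_i\ \ge\ \left(1+\sum_{i=1}^p(u_i-1)-\sum_{j=1}^q(v_j-1)\right)\prod_{j=1}^q v_j
\]
for positive integers with $\max_i u_i\le\min_j v_j$.

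I would prove $(\ast)$ by induction on $p+q$, the only external ingredient being the elementary bound $\prod_{i=1}^r x_i\ge 1+\sum_{i=1}^r(x_i-1)$ for integers $x_i\ge1$ (an induction on $r$ via $(x-1)(y-1)\ge0$). Write $S=\sum_i(u_i-1)$ and $\tau=\sum_j(v_j-1)$; if $S<\tau$ the right side of $(\ast)$ is $\le0$ and we are done, so assume $S\ge\tau$. If some $u_i=1$, delete it: neither side of $(\ast)$ changes and $p+q$ drops. So assume all $u_i\ge2$, hence all $v_j\ge\max_i u_i\ge2$. The cases $p=0$ and $q=0$ are immediate. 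Otherwise let $\mu=\max_i u_i$, $V=\max_j v_j$ and $\rho=1+S-\tau\ge1$. If $\rho>\mu$: delete a $u_i$ equal to $\mu$, apply the inductive hypothesis (whose hypotheses persist, and whose right side stays positive since $S-\mu+1\ge\tau$), and multiply back by $\mu$; the needed inequality then reduces to $\mu(\rho+1-\mu)\ge\rho$, i.e.\ $(\rho-\mu)(\mu-1)\ge0$. If $\rho\le\mu$: delete both a $u_i$ equal to $\mu$ and a $v_j$ equal to $V$, apply the inductive hypothesis, and multiply back by $\mu$; the needed inequality reduces to $\mu(\rho+V-\mu)\ge\rho V$, i.e.\ $(\rho-\mu)(\mu-V)\ge0$, valid because $\rho\le\mu\le V$. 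Unwinding the deletions proves $(\ast)$, and hence the proposition.

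The main obstacle is arriving at a workable normal form. The naive moves --- peeling the top factor $e_m$ off~(\ref{aug-27-15-2}), or splitting $\prod a_i$ into an ``inner'' part $\prod_{i\le k+1}a_i$ and an ``outer'' part $\prod_{i\ge k+2}a_i$ and bounding each crudely --- all collapse exactly when the $a_i$ with $i\le k+1$ are large (e.g.\ $e=(3,3,4)$, $b=(0,0,3)$, $k=1$, $b_0=1$, where the inequality reads $9\ge8$ with no single-index reduction available). The affine/vertex reduction is what pins those stubborn configurations down as the clean statement $(\ast)$, and the dichotomy $\rho\le\mu$ versus $\rho>\mu$ is precisely what turns the inductive step into the two trivial one-variable inequalities $(\rho-\mu)(\mu-1)\ge0$ and $(\rho-\mu)(\mu-V)\ge0$; identifying that dichotomy (so that the inductive hypotheses, and the positivity of their brackets, are preserved) is the one delicate point.
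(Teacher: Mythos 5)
Your proof is correct. I checked the three steps in detail: (1) the reduction to $b_0=1$ is legitimate because the left side is independent of $b_0$ and the right side is non-increasing in $b_0$; (2) the difference of the two sides is indeed affine in each $b_i$ separately, so its minimum over the box $\prod_i[0,e_i-1]$ occurs at a vertex $b_i\in\{0,e_i-1\}$, and your computation of the difference at a vertex, namely $\bigl(\prod_{i\in A_2}e_i\bigr)\bigl[\prod_{i\in T}e_i-\bigl(1+\sum_{i\in T}(e_i-1)-\sum_{i\in U}(e_i-1)\bigr)\prod_{i\in U}e_i\bigr]$, is right, with the ordering $\max_i u_i\le\min_j v_j$ coming from $T\subseteq\{1,\dots,k+1\}$, $U\subseteq\{k+2,\dots,m\}$ and $e_1\le\cdots\le e_m$; (3) the induction on $p+q$ for $(\ast)$ is sound: the base cases ($q=0$ is the elementary bound $\prod x_i\ge 1+\sum(x_i-1)$, $p=0$ forces $\tau=0$), the deletion of $u_i=1$, and the two algebraic identities $\mu(\rho+1-\mu)-\rho=(\mu-1)(\rho-\mu)$ and $\mu(\rho+V-\mu)-\rho V=(\rho-\mu)(\mu-V)$ all check out, and the hypotheses of the inductive instance (positivity, ordering) persist in both subcases. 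As for comparison: this paper does not prove the proposition at all—it is quoted from \cite[Proposition~5.7]{hilbert-min-dis}—so there is no internal argument to measure yours against; your proof is an independent, self-contained one. Its distinctive feature is the multi-affine/vertex reduction, which replaces the general configuration of the $b_i$ by the extremal one ($b_i\in\{0,e_i-1\}$) and thereby isolates the genuinely hard cases in the clean two-family inequality $(\ast)$, settled by a short induction whose only external input is the elementary bound; this is a more structural route than a direct induction on $m$ over the original inequality, at the modest cost of the bookkeeping needed to pass from the vertex data $(T,U,A_2)$ to $(\ast)$.
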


We come to the main result of this section.

\begin{theorem}\label{footprint-ci-monomial} 
Let $I\subset S$ be a complete intersection monomial ideal of dimension $\geq 1$
minimally generated by $t^{\alpha_1},\ldots,t^{\alpha_r}$ and let
$d\geq 1$ be an integer. If
$d_i=\deg(t^{\alpha_i})$ for $i=1,\ldots,r$ and $d_1\leq\cdots\leq d_r$, then
$$
\delta_I(d)={\rm fp}_I(d)=\left\{\hspace{-1mm}
\begin{array}{ll}\left(d_{k+1}-\ell\right)d_{k+2}\cdots d_r&\mbox{ if }
d<\sum\limits_{i=1}^{r}\left(d_i-1\right),\\
\qquad \qquad 1&\mbox{ if } d\geq
\sum\limits_{i=1}^{r}\left(d_i-1\right),
\end{array}
\right.
$$
where $0\leq k\leq r-1$ and $\ell$ are integers such that 
$d=\sum_{i=1}^{k}\left(d_i-1\right)+\ell$ and $1\leq \ell \leq
d_{k+1}-1$. 
\end{theorem}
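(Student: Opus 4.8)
The plan is to bracket the common value $\delta_I(d)={\rm fp}_I(d)$ between the two sides of the inequality of Proposition~\ref{footprint-monomial-ci-ineq}(c), closing the gap by means of the combinatorial inequality of Proposition~\ref{aug-28-15}. Since $I$ is an unmixed monomial ideal, Proposition~\ref{unmixed-monomial-geil-carvalho} already gives $\delta_I(d)={\rm fp}_I(d)$, so I may work with whichever of the two functions is more convenient and transfer the conclusion to the other. The case $d\geq\sum_{i=1}^r(d_i-1)$ is immediate: $\sum_{i=1}^r(d_i-1)={\rm reg}(S/I)$ by Proposition~\ref{footprint-monomial-ci-ineq}(a), and then Proposition~\ref{footprint-monomial-ci-ineq}(b) gives $\delta_I(d)=1$. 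For $d<\sum_{i=1}^r(d_i-1)$, Proposition~\ref{footprint-monomial-ci-ineq}(c) supplies the upper bound $\delta_I(d)\leq(d_{k+1}-\ell)d_{k+2}\cdots d_r$, so the whole content is the reverse inequality $\delta_I(d)\geq(d_{k+1}-\ell)d_{k+2}\cdots d_r$.

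For the lower bound I would write ${\rm fp}_I(d)$ as a minimum over monomials and then estimate each term. If $\mathcal{M}_{\prec,d}=\emptyset$, then ${\rm fp}_I(d)=\deg(S/I)=d_1\cdots d_r\geq(d_{k+1}-\ell)d_{k+2}\cdots d_r$ (using $d_i\geq 1$ and $1\leq d_{k+1}-\ell$), and we are done; so assume $\mathcal{M}_{\prec,d}\neq\emptyset$. Since ${\rm in}_\prec(I)=I$ is a complete intersection, hence unmixed, Corollary~\ref{jul1-16} yields
\[
{\rm fp}_I(d)=\min\{\deg(S/(I\colon t^a))\ \vert\ t^a\in S_d\setminus I\},
\]
the minimum running over monomials. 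Fix a monomial $t^a$ of degree $d$ with $t^a\notin I$ and set $c_i=\gcd(t^{\alpha_i},t^a)$, $b_i=\deg(c_i)$. By Lemma~\ref{reducing-exponents-lemma}(a) the monomials $t^{\alpha_1},\ldots,t^{\alpha_r}$ have pairwise disjoint supports, so a direct computation gives $(I\colon t^a)=(t^{\alpha_1}/c_1,\ldots,t^{\alpha_r}/c_r)$; since $t^a\notin I$ we have $c_i\neq t^{\alpha_i}$, hence each $t^{\alpha_i}/c_i$ is a nonconstant monomial of degree $d_i-b_i\geq 1$, and these quotients still have pairwise disjoint supports, so $(I\colon t^a)$ is a complete intersection of height $r$. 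By Lemma~\ref{hilbertseries-ci-main}, $\deg(S/(I\colon t^a))=\prod_{i=1}^r(d_i-b_i)$. Moreover $0\leq b_i\leq d_i-1$, and because the $c_i$ have disjoint supports and each divides $t^a$, their product divides $t^a$, so $\sum_{i=1}^r b_i\leq\deg(t^a)=d$.

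Now I would apply Proposition~\ref{aug-28-15} with $m=r$, $e_i=d_i$, the $b_i$ just produced, $b_0=1$, and the index $k$ of the statement. With $b_0=1$ the bracketed factor on the right-hand side of \eqref{aug-27-15-2} equals $\sum_{i=1}^{k+1}d_i-k-\sum_{i=1}^r b_i$, and using $\sum_{i=1}^r b_i\leq d=\sum_{i=1}^k(d_i-1)+\ell$ one gets
\[
\sum_{i=1}^{k+1}d_i-k-\sum_{i=1}^r b_i\ \geq\ \sum_{i=1}^{k+1}d_i-k-\sum_{i=1}^k(d_i-1)-\ell\ =\ d_{k+1}-\ell ,
\]
so \eqref{aug-27-15-2} gives $\deg(S/(I\colon t^a))=\prod_{i=1}^r(d_i-b_i)\geq(d_{k+1}-\ell)d_{k+2}\cdots d_r$. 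Taking the minimum over $t^a$ yields ${\rm fp}_I(d)\geq(d_{k+1}-\ell)d_{k+2}\cdots d_r$, and combining this with $\delta_I(d)={\rm fp}_I(d)$ (Proposition~\ref{unmixed-monomial-geil-carvalho}) and the upper bound of Proposition~\ref{footprint-monomial-ci-ineq}(c) forces the asserted equality. I expect the only real work to be routine bookkeeping: verifying that the data $(b_0,b_1,\ldots,b_r)$ attached to an arbitrary $t^a$ meet the hypotheses of Proposition~\ref{aug-28-15}---in particular the constraint $\sum_{i=1}^r b_i\leq d$, which is exactly what forces the choice $b_0=1$---and checking that the specialization of \eqref{aug-27-15-2} collapses precisely to $(d_{k+1}-\ell)d_{k+2}\cdots d_r$; no new idea beyond the already-cited results seems to be required.
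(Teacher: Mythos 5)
Your proposal is correct and is essentially the paper's own argument: reduce to the lower bound for ${\rm fp}_I$ via Proposition~\ref{unmixed-monomial-geil-carvalho} and Proposition~\ref{footprint-monomial-ci-ineq}, identify $(I\colon t^a)$ as a complete intersection of degree $\prod_{i=1}^r(d_i-b_i)$ with $b_i\leq d_i-1$ and $\sum_i b_i\leq d$, and close with Proposition~\ref{aug-28-15} taking $b_0=1$, $m=r$. The only (harmless) deviation is that you obtain the exponents $b_i$ from the explicit colon formula $(I\colon t^a)=(t^{\alpha_1}/\gcd(t^{\alpha_1},t^a),\ldots,t^{\alpha_r}/\gcd(t^{\alpha_r},t^a))$ rather than from Lemma~\ref{reducing-exponents-lemma}(d), which serves the same purpose.
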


\begin{proof} The ideal $I$ is unmixed because $I$ is Cohen--Macaulay. Hence, by
Proposition~\ref{unmixed-monomial-geil-carvalho}, $I$ is
Geil--Carvalho, that is, $\delta_I(d)={\rm fp}_I(d)$ for $d\geq 1$.
Therefore, by Proposition~\ref{footprint-monomial-ci-ineq}, it
suffices to show that 
$$
{\rm fp}_I(d)\geq(d_{k+1}-\ell)d_{k+2}\cdots d_r\ \mbox{ for } 
d<{\rm reg}(S/I).
$$

 Let $t^a$ be a monomial of degree $d$ such that $t^a\notin I$ and
$(I\colon t^a)\neq I$. By Lemma~\ref{reducing-exponents-lemma}(d), for
each $i$ there is a monomial $t^{\beta_i}$ dividing $t^{\alpha_i}$ such that 
$\deg(t^{\beta_i})<\deg(t^{\alpha_i})$ and $(I\colon t^a)=(I\colon t^\beta)$,
where $t^\beta=t^{\beta_1}\cdots t^{\beta_r}$. One can write 
$$ 
t^{\alpha_i}=t_1^{\alpha_{i,1}}\cdots t_s^{\alpha_{i,s}}\ \mbox{ and
}\  t^{\beta_i}=t_1^{\beta_{i,1}}\cdots t_s^{\beta_{i,s}}
$$
for $i=1,\ldots,r$. According to
Lemma~\ref{reducing-exponents-lemma}(a) the monomials 
$t^{\alpha_i}$ and $t^{\alpha_j}$ have no
common variables for $i\neq j$. As $(I\colon t^\beta)$ is a monomial
ideal, it follows that  
$$
(I\colon t^a)=(I\colon t^\beta)=(\{t_1^{\alpha_{i,1}-\beta_{i,1}}\cdots
t_s^{\alpha_{i,s}-\beta_{i,s}}\}_{i=1}^r).
$$

Hence, setting $g_i=t_1^{\alpha_{i,1}-\beta_{i,1}}\cdots
t_s^{\alpha_{i,s}-\beta_{i,s}}$ for $i=1,\ldots,r$ and observing that 
$g_i$ and $g_j$ have no common variables for $i\neq j$, we get that
$g_1,\ldots,g_r$ form a regular sequence, that is, $(I\colon t^a)$ is
again a complete intersection. Thus, by
Lemma~\ref{hilbertseries-ci-main}, we obtain 
$$
\deg(S/(I\colon
t^a))=\prod_{i=1}^r\left[\sum_{j=1}^s(\alpha_{i,j}-\beta_{i,j})\right]=
\prod_{i=1}^r\left[\deg(t^{\alpha_i})-\deg(t^{\beta_i})\right].
$$
Therefore, setting $b_i=\deg(t^{\beta_i})$ for $i=1,\ldots,r$, we get
$$
\deg(S/(I\colon
t^a))=\prod_{i=1}^r(d_i-b_i).
$$
Thus, by Corollary~\ref{wolmer-obs}, it suffices to show the inequality 
$$
\deg(S/(I\colon
t^a))=\prod_{i=1}^r(d_i-b_i)\geq(d_{k+1}-\ell)d_{k+2}\cdots d_r.
$$
Noticing that 
$d=\deg(t^a)=\sum_{i=1}^{k}\left(d_i-1\right)+\ell\geq\deg(t^\beta)=\sum_{i=1}^rb_i$,
one has
$$
\left(d_{k+1}+\sum_{i=1}^k(d_i-1)-\sum_{i=1}^rb_i\right)d_{k+2}\cdots
d_r\geq(d_{k+1}-\ell)d_{k+2}\cdots d_r.
$$
Hence, we need only show the inequality
$$
\prod_{i=1}^r(d_i-b_i)\geq 
\left(\sum_{i=1}^{k+1}(d_i-b_i)-k-\sum_{i=k+2}^rb_i\right)d_{k+2}\cdots
d_r,
$$
which follows making $b_0=1$ and $m=r$ in Proposition~\ref{aug-28-15}. 
\end{proof}

\begin{theorem}\label{footprint-ci-general}
Let $I\subset S$ be a graded ideal of dimension $\geq 1$ and let $\prec$ be a
monomial order. If ${\rm in}_\prec(I)$ is a complete intersection of
height $r$ generated by 
$t^{\alpha_1},\ldots,t^{\alpha_r}$ with $d_i=\deg(t^{\alpha_i})$ and 
$1\leq d_i\leq d_{i+1}$ for $i\geq 1$, then $\delta_I(d)\geq {\rm
fp}_I(d)\geq 1$ and the footprint function is given by  
$$
{\rm fp}_I(d)=\left\{\begin{array}{ll}(d_{k+1}-\ell)d_{k+2}\cdots d_r
&\mbox{if }\ 
1\leq d\leq \sum\limits_{i=1}^{r}\left(d_i-1\right)-1,\\ 
1 &\mbox{if\ }\ d\geq \sum\limits_{i=1}^{r}\left(d_i-1\right),
\end{array}\right.
$$
where $0\leq k\leq r-1$ and $\ell$ are integers such that 
$d=\sum_{i=1}^{k}\left(d_i-1\right)+\ell$ and $1\leq \ell \leq
d_{k+1}-1$. 
\end{theorem}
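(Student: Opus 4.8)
The plan is to reduce the statement to the monomial case already established in Theorem~\ref{footprint-ci-monomial}. First I would dispatch the inequality $\delta_I(d)\geq{\rm fp}_I(d)\geq 1$: since ${\rm in}_\prec(I)$ is a monomial complete intersection of height $r$, Proposition~\ref{jan2-16}(a) shows that $I$ itself is a complete intersection with $\dim(S/I)=s-r$, so $I$ is Cohen--Macaulay and unmixed, and then Proposition~\ref{jan2-16}(c) gives $1\leq{\rm fp}_I(d)\leq\delta_I(d)$ for all $d\geq 1$. Thus the whole content of the theorem is the explicit piecewise formula for ${\rm fp}_I(d)$, and for this I would show that ${\rm fp}_I$ is insensitive to replacing $I$ by its initial ideal.

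Put $J:={\rm in}_\prec(I)$. The point is that every ingredient of ${\rm fp}_I(d)$ is built out of $J$ together with $\deg(S/I)$: the footprint $\Delta_\prec(I)$ is by definition the set of monomials outside $J$, the set $\mathcal{M}_{\prec,d}$ is determined by $J$ alone, and each degree $\deg(S/({\rm in}_\prec(I),t^a))=\deg(S/(J,t^a))$ depends only on $J$. Moreover $S/I$ and $S/J$ have the same Hilbert function, hence $\deg(S/I)=\deg(S/J)$ and $\dim(S/J)=\dim(S/I)=s-r\geq 1$. Since ${\rm in}_\prec(J)=J$, the set $\mathcal{M}_{\prec,d}$ attached to $I$ coincides with the one attached to $J$ --- in particular one is empty exactly when the other is --- so both branches of the definition agree and ${\rm fp}_I(d)={\rm fp}_J(d)$ for every $d\geq 1$, where ${\rm fp}_J$ is the footprint function of the monomial ideal $J$.

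To finish I would invoke Theorem~\ref{footprint-ci-monomial} for the monomial complete intersection $J$, which has dimension $\geq 1$ and is minimally generated by $t^{\alpha_1},\ldots,t^{\alpha_r}$ with $\deg(t^{\alpha_i})=d_i$ and $d_1\leq\cdots\leq d_r$. That theorem yields
\[
{\rm fp}_J(d)=\delta_J(d)=\begin{cases}(d_{k+1}-\ell)d_{k+2}\cdots d_r&\mbox{if }d<\sum_{i=1}^r(d_i-1),\\ 1&\mbox{if }d\geq\sum_{i=1}^r(d_i-1),\end{cases}
\]
with $k$ and $\ell$ exactly as in the statement (recall also that $\sum_{i=1}^r(d_i-1)={\rm reg}(S/I)$ by Proposition~\ref{jan2-16}(b)). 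Since the condition $d<\sum_{i=1}^r(d_i-1)$ is the same as $1\leq d\leq\sum_{i=1}^r(d_i-1)-1$, combining this with ${\rm fp}_I(d)={\rm fp}_J(d)$ gives the asserted formula, and $\delta_I(d)\geq{\rm fp}_I(d)\geq 1$ has already been noted.

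I do not expect a genuine obstacle here: the real work has been done in the monomial case (Theorem~\ref{footprint-ci-monomial}) and in Proposition~\ref{jan2-16}. The only point that demands care is the reduction itself --- spelling out, with the empty/non-empty dichotomy of $\mathcal{M}_{\prec,d}$ included, that ${\rm fp}_I(d)$ literally equals ${\rm fp}_J(d)$, so that no hypothesis of Theorem~\ref{footprint-ci-monomial} is applied without justification. I would also add a sentence pointing out that the lower bound $\delta_I(d)\geq{\rm fp}_I(d)$ is sharp, attained when $I$ is itself monomial (so that $I=J$ and Proposition~\ref{unmixed-monomial-geil-carvalho} applies), which is what justifies calling it an \emph{exact sharp} lower bound.
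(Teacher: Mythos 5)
Your proposal is correct and follows the same route as the paper's own proof: the inequality $\delta_I(d)\geq{\rm fp}_I(d)\geq 1$ via Proposition~\ref{jan2-16}, the observation that ${\rm fp}_I(d)={\rm fp}_{{\rm in}_\prec(I)}(d)$ because every ingredient of the footprint function is determined by ${\rm in}_\prec(I)$ together with $\deg(S/I)=\deg(S/{\rm in}_\prec(I))$, and then Theorem~\ref{footprint-ci-monomial} applied to the monomial complete intersection ${\rm in}_\prec(I)$. Your explicit justification of the reduction (including the empty/non-empty dichotomy for $\mathcal{M}_{\prec,d}$) merely spells out what the paper states in one line, so there is nothing to add.
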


\begin{proof} By Proposition~\ref{jan2-16} one has $\delta_I(d)\geq {\rm
fp}_I(d)\geq 1$. Since ${\rm fp}_I(d)$ is equal to 
${\rm fp}_{{\rm in}_\prec(I)}(d)$
for $d\geq 1$, the formula for ${\rm fp}_I(d)$ follows directly from
Theorem~\ref{footprint-ci-monomial}.
 \end{proof}

It is an open question whether in Theorem~\ref{footprint-ci-general} 
one has the equality $\delta_I(d)={\rm fp}_I(d)$ for $d\geq 1$. 
If we make $r=s-1$ in Theorem~\ref{footprint-ci-general}, we recover
\cite[Theorem~3.14]{min-dis-ci}. The reader is referred to \cite{min-dis-ci} for some 
interesting applications of this result to
algebraic coding theory. As is seen in
\cite[Corollary~4.5]{min-dis-ci} this result can also be
used to extend a result of Alon and F\"uredi
\cite[Theorem~1]{alon-furedi} about coverings of the cube 
$\{0,1\}^n$ by affine hyperplanes.

\section{Computing the minimum distance function}

Let $I\subset S$ be a graded ideal and let $\prec$ be a monomial
order. The minimum distance function of $I$
can be expressed as follows. 

\begin{theorem}\label{computing-the-minimum-distance} 
If $\Delta_\prec(I)\cap
S_d=\{t^{a_1},\ldots,t^{a_n}\}$ is the set of all standard monomials
of $S/I$ of degree $d\geq 1$ and 
$\mathcal{F}_{\prec,d}=\left.\left\{f=\sum_i{\lambda_i}t^{a_i}\, \right|\,
f\neq 0,\ \lambda_i\in
K,\, (I\colon f)\neq I\right\}$, then 
$$
\delta_I(d)=
\deg(S/I)-\max\{\deg(S/(I,f))\vert\, f\in\mathcal{F}_{\prec, d}\}.
$$
\end{theorem}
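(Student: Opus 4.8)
The plan is to show that the maximization over polynomials supported on standard monomials produces the same value as the maximization over all of $\mathcal{F}_d$, so that the stated formula agrees with the definition of $\delta_I(d)$. The key observation is that every nonzero element of $S_d/I_d$ has a \emph{unique} standard representative: if $f\in S_d$ and $f\notin I$, then the normal form of $f$ modulo a Gr\"obner basis of $I$ (with respect to $\prec$) is a nonzero polynomial $\widetilde{f}$ that is a $K$-linear combination of standard monomials of degree $d$, and $f-\widetilde{f}\in I$. First I would record that this reduction does not change the relevant colon ideal: since $f\equiv\widetilde{f}\pmod{I}$, we have $(I\colon f)=(I\colon \widetilde{f})$ and $(I,f)=(I,\widetilde{f})$; in particular $f\in\mathcal{F}_d$ if and only if $\widetilde{f}\in\mathcal{F}_{\prec,d}$, and $\deg(S/(I,f))=\deg(S/(I,\widetilde{f}))$.

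With that in hand the argument splits into the two cases in the definition of $\delta_I$. If $\mathcal{F}_d=\emptyset$, then by the previous paragraph $\mathcal{F}_{\prec,d}=\emptyset$ as well: any $f\in\mathcal{F}_{\prec,d}$ is in particular a nonzero element of $S_d$ not in $I$ with $(I\colon f)\neq I$, hence lies in $\mathcal{F}_d$. In this case both sides equal $\deg(S/I)$ (the right-hand side by the convention that $\max\emptyset=0$, which I would state explicitly so the displayed formula is unambiguous). If $\mathcal{F}_d\neq\emptyset$, then $\mathcal{F}_{\prec,d}\neq\emptyset$ too, and I would prove the two inequalities
$$
\max\{\deg(S/(I,f))\mid f\in\mathcal{F}_{\prec,d}\}\ \le\ \max\{\deg(S/(I,f))\mid f\in\mathcal{F}_d\}
$$
and its reverse. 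The first is immediate because $\mathcal{F}_{\prec,d}\subseteq\mathcal{F}_d$ (a polynomial supported on standard monomials of degree $d$ is automatically a degree-$d$ polynomial not in $I$). For the reverse, given $f\in\mathcal{F}_d$ realizing the maximum on the right, its normal form $\widetilde{f}$ lies in $\mathcal{F}_{\prec,d}$ and satisfies $\deg(S/(I,\widetilde{f}))=\deg(S/(I,f))$, so the left-hand maximum is at least as large. Subtracting from $\deg(S/I)$ gives the claimed identity.

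I do not expect a serious obstacle here; the only point requiring a little care is the bookkeeping of conventions, namely that $\max\emptyset$ is interpreted as $0$ in the displayed formula so that it reproduces the $\mathcal{F}_d=\emptyset$ branch of the definition of $\delta_I$, and the verification that $\mathcal{F}_{\prec,d}$ is nonempty exactly when $\mathcal{F}_d$ is. Both follow from the normal-form correspondence $f\mapsto\widetilde{f}$ together with the elementary fact that passing to a congruent polynomial modulo $I$ leaves $(I\colon f)$ and $(I,f)$ unchanged.
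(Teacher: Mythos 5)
Your proposal is correct and follows essentially the same route as the paper: reduce each $f\in\mathcal{F}_d$ to its normal form (remainder under the division algorithm) supported on standard monomials of degree $d$, note that $(I\colon f)$ and $(I,f)$ are unchanged, and conclude that the two maxima coincide. Your extra bookkeeping about the case $\mathcal{F}_d=\emptyset$ and the equivalence $\mathcal{F}_d\neq\emptyset \Leftrightarrow \mathcal{F}_{\prec,d}\neq\emptyset$ is exactly the remark the paper makes right after its proof, so nothing essential differs.
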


\demo 
Let $f$ be any element of $\mathcal{F}_d$. Pick a Gr\"obner basis
$g_1,\ldots,g_r$ of $I$. Then, by the division algorithm
\cite[Theorem~3, p.~63]{CLO}, we can write 
$f=\sum_{i=1}^ra_ig_i+h$, 
where $h$ is a homogeneous standard polynomial of $S/I$ of degree $d$. 
Since $(I\colon f)=(I\colon h)$, we get that $h$ is in
$\mathcal{F}_{\prec,d}$. Hence, as $(I,f)=(I,h)$, we get the
equalities:
\begin{eqnarray*}
\delta_I(d)&=&
\deg(S/I)-\max\{\deg(S/(I,f))\vert\, f\in\mathcal{F}_d\}\\
&=&
\deg(S/I)-\max\{\deg(S/(I,f))\vert\, f\in\mathcal{F}_{\prec, d}\}.
\ \ \ \Box
\end{eqnarray*}

Notice that $\mathcal{F}_d\neq\emptyset$ if and only if
$\mathcal{F}_{\prec, d}\neq\emptyset$. 
If $K=\mathbb{F}_q$ is a finite field, then the number of standard
polynomials of degree $d$ is $q^n-1$, where $n$ is the
number of standard monomials of degree $d$.  
Hence, we can compute $\delta_I(d)$ for small values of
$d$, $n$, and $q$. To compute ${\rm fp}_I(d)$ is much easier 
even if the field is infinite because $\mathcal{M}_{\prec,d}$ has at
most $n$ elements. 

\begin{example}\label{dec4-15}
Let $K$  be the field $\mathbb{F}_2$ and let $I$ be the ideal of 
$S=\mathbb{F}_2[t_1,t_2,t_3]$ generated by the binomials 
$t_1t_2^2-t_1^2t_2,\, t_1t_3^2-t_1^2t_3,\, t_2^2t_3-t_2t_3^2$. 
If $S$ has the GRevLex order $\prec$, then 
using Theorem~\ref{computing-the-minimum-distance} and the procedure below
for {\em Macaulay\/}$2$ \cite{mac2} we get 
\begin{eqnarray*}
\hspace{-11mm}&&\left.
\begin{array}{c|c|c|c|c}
d & 1 & 2 & 3 & \cdots\\
   \hline
{\deg}(S/I) & 7 & 7 & 7 &\cdots
 \\ 
   \hline
 H_I(d)    \    & 3 & 6  & 7& 
 \cdots\\ 
   \hline
 \delta_I(d) &4& 2& 1& \cdots\\ 
 \hline
 {\rm fp}_I(d) &4& 1& 1& \cdots\\ 
\end{array}
\right.
\end{eqnarray*}

\begin{verbatim}
q=2
S=ZZ/q[t1,t2,t3]
I=ideal(t1*t2^q-t1^q*t2,t1*t3^q-t1^q*t3,t2^q*t3-t2*t3^q)
M=coker gens gb I, degree M, regularity M
h=(d)->degree M - max apply(apply(apply(apply(
toList (set(0..q-1))^**(hilbertFunction(d,M))-
(set{0})^**(hilbertFunction(d,M)),toList),x->basis(d,M)*vector x),
z->ideal(flatten entries z)),x-> if not 
quotient(I,x)==I then degree ideal(I,x) else 0)--The function h(d) 
--gives the minimum distance in degree d
init=ideal(leadTerm gens gb I)
hilbertFunction(1,M),hilbertFunction(2,M),hilbertFunction(3,M)
f=(x)-> if not quotient(init,x)==init then degree ideal(init,x) else 0
fp=(d) ->degree M -max apply(flatten entries basis(d,M),f)--The
--function fp(d) gives the footprint in degree d
h(1), h(2), fp(1), fp(2)
\end{verbatim}
\end{example}

\begin{example}\label{footprint=0-unmixed-ti-zero-div}\rm 
Let $S=\mathbb{F}_3[t_1,t_2,t_3,t_4]$ be a polynomial ring over the field
$\mathbb{F}_3$ with the GRevLex order $\prec$, let $\mathfrak{p}_1,\ldots,\mathfrak{p}_5$ be the 
prime ideals 
$$
\begin{array}{lll}
\mathfrak{p}_1=(t_3 + t_4, t_2 + t_4, t_1 + t_4),& \mathfrak{p}_2=(t_3
+ t_4, t_2, t_1 - t_4),& 
\mathfrak{p}_3=(t_4, t_2, t_1),\\
 \mathfrak{p}_4=(t_4, t_3, t_1),&
\mathfrak{p}_5=(t_4, t_2 - t_3, t_1),& 
\end{array}
$$
and let $I=\cap_{i=1}^5\mathfrak{p}_i$ be the intersection of these
prime ideals. Then, using {\it Macaulay\/}$2$ \cite{mac2}, we get 
${\rm reg}(S/I)=2$, $\deg(S/I)=5$, the initial ideal of $I$
is
$$
{\rm in}_\prec(I)=(t_3t_4,\, t_1t_4,\, t_1t_3,\, t_1t_2,\,t_1^2,\,t_2^2t_4,\,
t_2^2t_3),
$$
${\rm in}_\prec(I)$ is a monomial ideal of height $3$, $\mathfrak{m}$ is an
associated prime of ${\rm in}_\prec(I)$, and ${\rm
fp}_I(1)=0$. Thus the lower bound for the footprint 
${\rm fp}_I(d)$ given in Proposition~\ref{md-unmixed-propo}(b) is sharp.
\end{example}

\bibliographystyle{plain}

\end{document}